\newtheorem{theorem}{Theorem}
\newtheorem*{theorem*}{Theorem}
\newtheorem{lemma}{Lemma}
\newtheorem{corollary}{Corollary}
\newtheorem*{corollary*}{Corollary}
\newtheorem{proposition}{Proposition}
\newtheorem{claim}{\it Claim}
\newtheorem*{claim*}{\it Claim}
\newtheorem*{theoremA}{Theorem A}
\theoremstyle{definition}
\newtheorem*{definition*}{\sc Definition}
\newtheorem{remark}{\bf Remark}
\newtheorem*{remark*}{\bf Remark}
\newtheorem*{remarks}{\bf Remarks}
\newtheorem*{example*}{\bf Example}
\newcommand{\loc}{{\rm loc}}
\newcommand{\Real}{{\rm Re}}
\newcommand{\sprt}{{\rm sprt\,}}
\def\dotminus{\mathbin{\ooalign{\hss\raise1ex\hbox{.}\hss\cr
  \mathsurround=0pt$-$}}}
\begin{document}

\title{Heat kernel of fractional Laplacian with Hardy drift via desingularizing weights}

\keywords{Non-local operators, heat kernel estimates, desingularization}

\author{D.\,Kinzebulatov}

\address{Universit\'{e} Laval, D\'{e}partement de math\'{e}matiques et de statistique, 1045 av.\,de la M\'{e}decine, Qu\'{e}bec, QC, G1V 0A6, Canada}

\email{damir.kinzebulatov@mat.ulaval.ca}

\author{Yu.\,A.\,Sem\"{e}nov}

\address{University of Toronto, Department of Mathematics, 40 St.\,George Str, Toronto, ON, M5S 2E4, Canada}

\email{semenov.yu.a@gmail.com}

\author{K.\,Szczypkowski}

\address{Politechnika Wroc\l awska, Wydzia\l\;Matematyki, Wyb. Wyspia\`{n}skiego
27, 50-370 Wroc\l aw, Poland}

\email{karol.szczypkowski@pwr.edu.pl}

\begin{abstract}
We establish sharp two-sided bounds on the heat kernel of the fractional Laplacian, perturbed by a drift having critical-order singularity, by transferring it to appropriate weighted space with singular weight.
\end{abstract}

\subjclass[2010]{35K08, 47D07 (primary), 60J35 (secondary)}

\thanks{The research of D.K. is supported by grants from NSERC and FRQNT}

\maketitle

\section{Introduction}

The subject of this paper are sharp two-sided weighted bounds on the heat kernel $e^{-t\Lambda}(x,y)$ of the fractional Kolmogorov operator ($1<\alpha<2$)
\begin{equation*}
\Lambda \equiv (-\Delta)^{\frac{\alpha}{2}} + b \cdot \nabla, \quad b(x)=\kappa|x|^{-\alpha}x,\;\;\kappa>0,
\end{equation*}
on $\mathbb R^d$, $d \geq 3$, where
\begin{equation}
\label{kappa_}
\kappa:=\sqrt{\delta} \frac{2^{\alpha+1}}{d-\alpha}\frac{\Gamma^2(\frac{d}{4}+\frac{\alpha}{4})}{\Gamma^2(\frac{d}{4}-\frac{\alpha}{4})}, \quad 0<\delta<4.
\end{equation}
The model vector field $b$ exhibits critical behaviour: the standard upper bound on $e^{-t\Lambda}(x,y)$ in terms of the heat kernel of $(-\Delta)^{\frac{\alpha}{2}}$ does not hold. Instead, both upper and lower bounds depend explicitly on the value of relative bound $\delta$ via the presence of a singular weight:
\begin{equation}
\label{lub}
e^{-t\Lambda}(x,y) \approx e^{-t(-\Delta)^\frac{\alpha}{2}}(x,y)(t^{-\frac{1}{\alpha}}|y| \wedge 1)^{-d+\beta}, 
\end{equation}
for all $t>0$, $x$, $y \in \mathbb R^d$, $y \neq 0$,
where $\beta \in ]\alpha,d[$ defined as the unique solution to the equation
\begin{equation}
\label{beta_eq}
\frac{2^\alpha}{\beta-\alpha}\frac{\Gamma(\frac{\beta}{2})\Gamma(\frac{d}{2}-\frac{\beta-\alpha}{2})}{\Gamma(\frac{d}{2}-\frac{\beta}{2})\Gamma(\frac{\beta-\alpha}{2})}=\kappa;
\end{equation}
\eqref{beta_eq} entails that $|x|^{-d+\beta}$ is a Lyapunov function to the formal operator $\Lambda^*=(-\Delta)^{\frac{\alpha}{2}}-\nabla \cdot b$, i.e.\,$\Lambda^*|x|^{-d+\beta}=0$. 
Here, given $a(z)$, $b(z) \geq 0$, we write $a(z) \approx b(z)$ if $c^{-1}b(z)\leq a(z) \leq cb(z)$ for some constant $c>1$ and all admissible $z$. Recall that $e^{-t(-\Delta)^\frac{\alpha}{2}}(x,y) \approx t^{-\frac{d}{\alpha}}\wedge\frac{t}{|x-y|^{d+\alpha}}$.

The operator $(-\Delta)^{\frac{\alpha}{2}} + \mathsf{f} \cdot \nabla$, $\mathsf{f}:\mathbb R^d \rightarrow \mathbb R^d$ has been the subject of intensive study over the past few decades motivated, in particular, by applications in probability theory. Indeed, $(-\Delta)^{\frac{\alpha}{2}} + \mathsf{f} \cdot \nabla$ arises as the generator of $\alpha$-stable process with a drift which, in contrast to diffusion processes, has long range interactions.
The search for the largest class of admissible vector fields $\mathsf{f}$ led to the Kato class corresponding to $(-\Delta)^{\frac{\alpha}{2}}$, which is recognized as the class responsible for existence of two-sided 
estimates on the heat kernel of $(-\Delta)^{\frac{\alpha}{2}} + \mathsf{f} \cdot \nabla$ in terms  of 
the heat kernel of $(-\Delta)^{\frac{\alpha}{2}}$ \cite{BJ}.

The vector field $b(x)=\kappa|x|^{-\alpha}x$ has a stronger singularity than the ones covered by the Kato class, and so the corresponding semigroup $e^{-t\Lambda}$ is not $L^1 \rightarrow L^\infty$ ultracontractive. It turns out that $e^{-t\Lambda}$ is ultracontractive as a mapping $L^1_{\scriptscriptstyle \sqrt{\varphi}} \rightarrow L^\infty$, where $L^1_{\scriptscriptstyle \sqrt{\varphi}} \equiv L^1(\mathbb R^d,\varphi dx)$, for appropriate singular weight $\varphi(y) \equiv \varphi_s(y) \approx (s^{-\frac{1}{\alpha}}|y| \wedge 1)^{-d+\beta}$, $0<t \leq s$. This observation is the key to the proof of the following crucial bound on the heat kernel  $e^{-t\Lambda}(x,y)$ (we discuss its construction  below):
\begin{equation}
\label{nie_intro}
e^{-t\Lambda}(x,y) \leq Ct^{-\frac{d}{\alpha}}\varphi_t(y), 
\end{equation}
(the weighted Nash initial estimate).
In turn, the proof of the $L^1_{\scriptscriptstyle \sqrt{\varphi}} \rightarrow L^\infty$ ultracontractivity of $e^{-t\Lambda}$ depends on the ``desingularizing'' $(L^1,L^1)$ bound on the weighted semigroup $$\|\varphi_s e^{-t\Lambda}\varphi_s^{-1}f\|_{1} \leq e^{c\frac{t}{s}}\|f\|_{1}, \quad 0<t \leq s, \quad c>0,$$  
and a variant of the Sobolev embedding property  of $\Lambda$. See  Theorem \ref{thm2}.

Applying carefully bound \eqref{nie_intro}, the Duhamel formula, and taking into account the properties of $e^{-t(-\Delta)^{\frac{\alpha}{2}}}(x,y)$ and the explicit form of $b$, we prove the upper bound in \eqref{lub} in Theorem \ref{thm2_}. 

To establish the lower bound in \eqref{lub},
we first prove an ``integral lower bound'' $\langle \varphi_s e^{-t\Lambda} \varphi_s^{-1}g \rangle \geq e^{-\mu\frac{t}{s}}\|g\|_1$, $g \geq 0$, $0<t \leq s$, $\mu>0$. Here and below,
$$
\langle u \rangle :=\int_{\mathbb R^d}u dx, \qquad \langle u,v\rangle :=\langle u\bar{v}\rangle.
$$
Next, we prove the ``standard'' lower bound $e^{-t\Lambda}(x,y) \geq C_1 e^{-t(-\Delta)^\frac{\alpha}{2}}(x,y)$ away from the singularity at the origin, and then combine these two bounds using the $3q$ argument similar to the one in \cite[sect.\,4.2]{BGJP} to obtain the lower bound. See Theorem \ref{thm2_}.

In the proof of Theorem \ref{thm2_} we avoid the use of scaling, having in mind possible extensions of the method to more general coefficients.

In contrast to the existing literature (see overview below),
the present paper deals with purely non-local and non-symmetric situation. This requires new techniques. The latter becomes apparent already when one faces the problem of constructing the heat kernel of $\Lambda$ ($\equiv$ integral kernel of the corresponding $C_0$ semigroup $e^{-t\Lambda}$). Namely, let us note first that in the special case $\delta<1$ one can construct $\Lambda$ in $L^2$ as the algebraic sum $(-\Delta)^{\frac{\alpha}{2}} + b \cdot \nabla$, $D(\Lambda)=D((-\Delta)^{\frac{\alpha}{2}})$, in which case $\Lambda$ becomes the generator of a $C_0$ semigroup in $L^2$. Indeed, $b \cdot \nabla$ is Rellich's perturbation of $(-\Delta)^{\frac{\alpha}{2}}$, as follows from the fractional Hardy-Rellich inequality
$$\| (-\Delta)^{\frac{\alpha}{4}}f\|_2^2 \geq c_{d,\alpha}^2 \||x|^{-\frac{\alpha}{2}}f\|_2^2, \quad c_{d,\alpha}:=2^{\frac{\alpha}{2}}\frac{\Gamma(\frac{d}{4}+\frac{\alpha}{4})}{\Gamma(\frac{d}{4}-\frac{\alpha}{4})}$$
(see \cite[Lemma 2.7]{KPS}), and so $e^{-t\Lambda}$ is a holomorphic (contraction) semigroup in $L^2$ (see details in Section \ref{rellich_app}). Moreover, for $\delta<1$, $\Lambda$
possesses the Sobolev embedding property
$$
\Real\langle \Lambda f, f\rangle \geq (1-\sqrt{\delta})c_S \|f\|_{2j}^2, \quad j=\frac{d}{d-\alpha},
$$
which plays important role in the existing techniques of obtaining heat kernel estimates.
The Sobolev embedding  is a consequence of 
the Hardy-Rellich inequality and the Sobolev inequality.
However, these arguments become problematic even for $\delta = 1$. When $1<\delta<4$, the operator $(-\Delta)^{\frac{\alpha}{2}} + b \cdot \nabla$ ceases to be quasi-accretive in $L^2$, and the Sobolev embedding property ceases to hold even for some $1<j<\frac{d}{d-\alpha}$. 
Below we show that an operator realization $\Lambda$ of $(-\Delta)^{\frac{\alpha}{2}} + b \cdot \nabla$ can be constructed in $L^r$, $r \in ]\frac{2}{2-\sqrt{\delta}},\infty[$, for every $0<\delta<4$, such that
$$
e^{-t\Lambda}:=s\mbox{-}L^r\mbox{-}\lim_n e^{-t\Lambda(b_n)}, \quad t>0,
$$
where $\{b_n\}$ is appropriate smooth approximation of $b$. The main difficulty here (especially when $1 \leq \delta<4$) is that $\nabla$ is stronger than $(-\Delta)^{\frac{\alpha}{4}}$.
Further, we develop a proper non-symmetric variant of the desingularization method that works for all $0<\delta<4$. We note that the necessity to work not only in $L^2$, but within the entire scale of $L^r$ spaces, $1 \leq r \leq \infty$, is characteristic to the non-symmetric situation, and makes the method of the present paper largely different from the existing ones applicable in symmetric situation.

The method of this paper admits generalization to other operators.

\medskip

Let us now comment on the existing literature.

\smallskip

1.~Sharp two-sided weighted bounds on the heat kernel of the Schr\"{o}dinger operator $-\Delta - V$, $V(x)=\delta(\frac{d-2}{2})^2|x|^{-2}$, $0<\delta \leq 1$, constructed in $L^2$, were obtained in \cite{MS0, MS1,MS}. 
(It is well known that $\delta=1$ is the borderline case for the Schr\"{o}dinger operator, i.e.\,for $\delta>1$  solutions to the corresponding parabolic equation blow up instantly at each point in $\mathbb R^d$,
see \cite{BG,GZ}.) 

In \cite{MS}, the crucial $(L^1,L^1)$ bound is proved  for $-\Delta - V$
by means of the theory of $m$-sectorial operators and the Stampacchia criterion in $L^2$. 
However, attempts to apply that argument to $(-\Delta)^{\frac{\alpha}{2}} + b \cdot \nabla$, $\alpha<2$,  are problematic since $(-\Delta)^{\frac{\alpha}{2}}$ lacks the local properties of $-\Delta$ which makes the corresponding approximation and calculational techniques unusable. 
Below we develop a new approach to the proof of the $(L^1,L^1)$ bound appealing to the Lumer-Phillips Theorem applied to specially constructed $C_0$ semigroups in $L^1$ which approximate $\varphi e^{-t\Lambda}\varphi^{-1}$. The construction of the approximating semigroups is the key observation of this paper.

Thus, in contrast to \cite{MS}, where the $(L^1,L^1)$ bound is proved using the $L^2$ theory, here we prove it while staying within the $L^1$ theory.

Concerning the proof of the lower bound, we note that since $-\Delta$ is a local operator the authors in \cite{MS0, MS1,MS} obtain the lower bound from the a priori bounds corresponding to smooth approximations of the potential and the weights. In the present paper, in the case $\alpha<2$,  in order to obtain the lower bound one has to work directly with singular drift and the weights.

\smallskip

2.~The sharp two-sided weighted bounds on the heat kernel of the fractional Schr\"{o}dinger operator $H=(-\Delta)^{\frac{\alpha}{2}} - \delta  c^2_{d,\alpha} |x|^{-\alpha}$, $0<\alpha<2$, $0<\delta \leq 1$, were obtained in \cite{BGJP}.
In Appendix \ref{app_MS} we discuss how the argument of the present paper can be specified to $H$
to yield a purely operator-theoretic proof of the result of \cite{BGJP}. (Concerning the heat kernel estimates for the operator $(-\Delta)^{\frac{\alpha}{2}} + c|x|^{-\alpha}$, $c>0$, see \cite{CKSV, JW}.)

Concerning operator $H=(-\Delta)^{\frac{\alpha}{2}} - \delta  c^2_{d,\alpha}|x|^{-\alpha}$ in the borderline case $\delta=1$, by \cite[Cor.\,2.5]{FLS} (the fractional variant of the Brezis-Vasquez inequality \cite{BV}) one has $\langle (H+1) f, f \rangle \geq C_d\|f\|_{2j}^2$, $f \in C_c^\infty$, only with  $j \in [1,\frac{d}{d-\alpha}[
$. The latter yields a sub-optimal diagonal estimate ($\equiv$ weighted Nash initial estimate). Nevertheless, as was observed in \cite{BGJP}, this estimate is optimal for $t=1$, and so the optimal weighted Nash initial estimate follows for all $t>0$ due to the scaling properties of $e^{-tH}$. 
This is in contrast to the case $\delta=1$, $\alpha=2$: the scaling properties of $e^{t\Delta}$ are different, so one needs an additional argument in order to obtain the optimal upper bound (i.e.\,to pass to a space of higher dimension where one can appeal to the V.\,P.\,Il'in-Sobolev inequality, see \cite{MS}).

\medskip

Let us comment on the optimality of the constraint $\delta<4$ in \eqref{kappa_}.
In the local case $\alpha=2$, $\delta>4$ destroys the uniqueness of (appropriately defined) weak solution to the corresponding parabolic equation, see \cite[sect.\,4, remark 3]{KiS1} for details; in \cite{FL}, it is demonstrated in dimension $d=3$ that already for $\delta=4$ the properties of the corresponding semigroup are drastically different from the properties of $e^{t\Delta}$ and $e^{-t\Lambda_r}$, $\Lambda_r \supset -\Delta + b \cdot \nabla$, $\delta < 4$. 
We believe that 
the constraint $\delta<4$ (or at most $\delta \leq 4$) is optimal in the non-local case $1<\alpha<2$ for existence of the corresponding Markov semigroup, however, we do not address this issue in this paper.

%

\smallskip

3.~The proof of the weighted Nash initial estimate \eqref{nie_intro} also works for $\alpha=2$ (cf.\,Appendix \ref{app}). The corresponding result, however, is known, see  \cite{MSS}, \cite{MSS2}, where the authors obtain sharp upper and lower bounds on the heat kernel of the operator $-\nabla \cdot (1+c|x|^{-2}x^t x) \cdot \nabla + \delta_1 \frac{d-2}{2}|x|^{-2}x \cdot \nabla + \delta_2\frac{(d-2)^2}{4}|x|^{-2}$, $c>-1$, by considering it in the space $L^2(\mathbb R^d,|x|^\gamma dx)$ where it becomes symmetric (for appropriate constant $\gamma$). This approach, however, does not work for the operator $(-\Delta)^{\frac{\alpha}{2}} + \kappa|x|^{-\alpha}x \cdot \nabla$, $1<\alpha< 2$, cf.\,\cite{BDK}.

\smallskip

We conclude this introduction by emphasizing the following fact ensuing from the previous discussion and well known in the case $\alpha=2$:  the analogy between operators $(-\Delta)^{\frac{\alpha}{2}} + b \cdot \nabla$ with singular $b$ and $(-\Delta)^{\frac{\alpha}{2}} - V$ is superficial.

\medskip


\medskip

\subsection{Notations}

We denote by $\mathcal B(X,Y)$ the space of bounded linear operators between Banach spaces $X \rightarrow Y$, endowed with the operator norm $\|\cdot\|_{X \rightarrow Y}$. Set  $\mathcal B(X):=\mathcal B(X,X)$.

We write $T=s\mbox{-} X \mbox{-}\lim_n T_n$ for $T$, $T_n \in \mathcal B(X)$ if $Tf=\lim_n T_nf$ in $X$ for every $f \in X$. We also write $T_n \overset{s}{\rightarrow} T$ if $X=L^2$.

Denote $\|\cdot\|_{p \rightarrow q}:=\|\cdot\|_{L^p \rightarrow L^q}$.

$L^p_+:=\{f \in L^p \mid f \geq 0 \text{ a.e.}\}$.

$\mathcal S$ denotes the L.\,Schwartz space of test functions.

$C_{u}:=\{f \in C(\mathbb R^d)\mid f \text{ are uniformly continuous and bounded}\}$ (with the $\sup$-norm).

We write $c \neq c(\varepsilon)$ to emphasize that $c$ is independent of $\varepsilon$.


\setcounter{tocdepth}{1}

\tableofcontents

\section{Desingularizing weights}

\label{desing_sect}

Let $X$ be a locally compact topological space, and $\mu$ a $\sigma$-finite Borel measure on $X$. Set $L^p=L^p(X,\mu)$, $p \in [1,\infty]$. 
 
Let $-\Lambda \equiv -\Lambda_r$ be the generator of a $C_0$ semigroup $e^{-t\Lambda}$, $t>0$, in $L^r$ for some $r>1$.
Assume that 
\begin{equation}
\label{S1}
\tag{$S_1$}
\|e^{-t\Lambda}\|_{r \rightarrow \infty} \leq ct^{-\frac{j'}{r}}, \quad j'>0,
\end{equation}
but $e^{-t\Lambda}$ is not an ultra-contraction. In this case we will be assuming that there exist a family of real valued weights $\varphi=\{\varphi_s\}_{s>0}$ in $X$ such that 
\begin{equation}
\label{S2}
\tag{$S_2$}
0 \leq \varphi_s, \frac{1}{\varphi_s} \in L^1_{\loc}(X,\mu) \quad \text{for all $s>0$},
\end{equation}
\begin{equation}
\label{S4}
\tag{$S_3$}
\inf_{s>0,x \in X} \varphi_s(x)  \geq c_0>0,
\end{equation}
and there exists constant $c_1$, independent of $s$, such that, for all $0<t \leq s$,
\begin{equation}
\label{S3}
\tag{$S_4$}
\|\varphi_s e^{-t\Lambda}\varphi_s^{-1}f\|_{1} \leq c_1\|f\|_{1}, \quad f \in L^1 \cap L^\infty.
\end{equation}

The following general theorem is the point of departure for the desingularization method in the
non-selfadjoint setting:

\begin{theoremA}
Assume that  {\rm($S_1$)}\,-\,{\rm($S_4$)} hold.
Then, for each $t>0$, $e^{-t\Lambda}$ is an integral operator, and there is a constant $C=C(j,c_1,c_0)$ such that, up to change of $e^{-t\Lambda}(x,y)$ on a measure zero set, the weighted Nash initial estimate
\[
\label{nie}
\tag{$NIE_w$}
|e^{-t\Lambda}(x,y)|\leq Ct^{-j^\prime}  \varphi_t(y)
\]
is valid for $\mu$ a.e. $x,y \in X$.
\end{theoremA}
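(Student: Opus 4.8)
\emph{The plan.} I would deduce the weighted Nash initial estimate from the single operator bound
\begin{equation*}
\|e^{-t\Lambda}\varphi_t^{-1}\|_{1\to\infty}\leq Ct^{-j'},\qquad t>0,
\end{equation*}
which carries all the content. Granting it, the kernel and its pointwise bound are soft: by $(S_3)$ the function $\varphi_t^{-1}\leq c_0^{-1}$ is a bounded multiplier, so $e^{-t\Lambda}\varphi_t^{-1}\in\mathcal B(L^1,L^\infty)$, and — $\mu$ being $\sigma$-finite — any such operator is an integral operator with kernel $K_t\in L^\infty(X\times X,\mu\otimes\mu)$, $\|K_t\|_\infty=\|e^{-t\Lambda}\varphi_t^{-1}\|_{1\to\infty}$. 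Testing against bounded compactly supported $f$ (for which $\varphi_tf\in L^1$ by $(S_2)$) and a routine approximation passing the identity $e^{-t\Lambda}f=e^{-t\Lambda}\varphi_t^{-1}(\varphi_tf)$ to $L^1$ then show $e^{-t\Lambda}$ is an integral operator with kernel $e^{-t\Lambda}(x,y)=K_t(x,y)\varphi_t(y)$, whence $|e^{-t\Lambda}(x,y)|=|K_t(x,y)|\varphi_t(y)\leq Ct^{-j'}\varphi_t(y)$ for $\mu\otimes\mu$-a.e.\ $(x,y)$.

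\emph{Proof of the operator bound (Nash-type iteration).} I would prove the a priori stronger statement $\|e^{-\sigma\Lambda}\varphi_s^{-1}f\|_\infty\leq C\sigma^{-j'}\|f\|_1$ for all $0<\sigma\leq s$ and $f\in L^1\cap L^\infty$, the required bound being the case $\sigma=s=t$ (extended by density). Fix $s$ and such an $f$ with $\|f\|_1\leq1$. By $(S_3)$, $\varphi_s^{-1}f\in L^1\cap L^\infty\subset L^r$, so $e^{-\sigma\Lambda}\varphi_s^{-1}f$ is defined; splitting $e^{-\sigma\Lambda}=e^{-\frac\sigma2\Lambda}e^{-\frac\sigma2\Lambda}$, applying $(S_1)$ and the boundedness of $e^{-\cdot\Lambda}$ on $L^r$ over bounded time intervals, one gets $e^{-\sigma\Lambda}\varphi_s^{-1}f\in L^\infty$ and, crucially,
\begin{equation*}
\psi_f(\tau):=\sup_{0<\sigma\leq\tau}\sigma^{j'}\|e^{-\sigma\Lambda}\varphi_s^{-1}f\|_\infty<\infty,\qquad 0<\tau\leq s.
\end{equation*}
The self-improvement: split $e^{-\sigma\Lambda}=e^{-\frac\sigma2\Lambda}e^{-\frac\sigma2\Lambda}$ and apply, in this order, $(S_1)$; the elementary interpolation $\|g\|_r\leq\|g\|_1^{1/r}\|g\|_\infty^{1/r'}$ to $g=e^{-\frac\sigma2\Lambda}\varphi_s^{-1}f\in L^1\cap L^\infty$; and $(S_4)$ together with $(S_3)$ to bound $\|g\|_1=\|\varphi_s^{-1}\cdot\varphi_se^{-\frac\sigma2\Lambda}\varphi_s^{-1}f\|_1\leq c_0^{-1}c_1$. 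Since $j'-\frac{j'}{r}-\frac{j'}{r'}=0$, every power of $\sigma$ and of $2$ collapses, and one arrives at
\begin{equation*}
\sigma^{j'}\|e^{-\sigma\Lambda}\varphi_s^{-1}f\|_\infty\leq c\,2^{j'}(c_0^{-1}c_1)^{1/r}\,\psi_f(\sigma)^{1/r'}.
\end{equation*}
Taking $\sup_{0<\sigma\leq\tau}$ gives $\psi_f(\tau)\leq c\,2^{j'}(c_0^{-1}c_1)^{1/r}\psi_f(\tau)^{1/r'}$; as $\psi_f(\tau)<\infty$ and $1/r'<1$, one divides to obtain $\psi_f(\tau)\leq(c\,2^{j'})^rc_0^{-1}c_1=:C$, uniformly in $f$ (with $\|f\|_1\leq1$), in $\tau\leq s$, and in $s$. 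Homogeneity in $f$ and the choice $\sigma=\tau=s=t$ conclude.

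\emph{Main obstacle.} The delicate point is the a priori finiteness of $\psi_f(\tau)$, which is exactly what licenses the division in the last line. Since $e^{-t\Lambda}$ is assumed \emph{not} to be ultracontractive, $\|e^{-\sigma\Lambda}\varphi_s^{-1}\|_{1\to\infty}$ is not known to be finite in advance; this is precisely why the iteration must be run for a fixed $f\in L^1\cap L^\infty$, for which $(S_1)$ and the $L^r$-theory do yield a finite (but $s$- and $f$-dependent, hence unusable as a final estimate) bound. The rest is bookkeeping with Hölder exponents; $(S_3)$ is used twice — to control $\varphi_s^{-1}$ as a multiplier and to convert the weighted bound $(S_4)$ into an unweighted one — while $(S_2)$ serves only to make the pointwise manipulations with $\varphi_s^{\pm1}$ and the final identification of the kernel legitimate.
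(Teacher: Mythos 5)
Your proposal is correct, and its analytic core is the same Nash-type self-improvement that drives the paper's argument; the difference is in packaging. The paper first converts ($S_3$)+($S_4$) into the weighted $L^1$ bound $\|e^{-t\Lambda}h\|_1\leq c_0^{-1}c_1\|h\|_{1,\sqrt{\varphi_s}}$ and then quotes the weighted Coulhon--Raynaud extrapolation (Theorem 1, applied with $\psi=\varphi_s$, $p=1$, $q=r$, $r=\infty$), finally taking $s=t$; your $\psi_f(\tau)$ is exactly the quantity $R_{2T}$ in the proof of Theorem 1, and your midpoint splitting plus the interpolation $\|g\|_r\leq\|g\|_1^{1/r}\|g\|_\infty^{1/r'}$ reproduces that proof inline, specialized to the semigroup. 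What your route buys: you make explicit the a priori finiteness of $\psi_f(\tau)$ (for fixed $f\in L^1\cap L^\infty$, via ($S_1$) applied to $\varphi_s^{-1}f\in L^r$) that licenses dividing through in the self-improvement step -- a point left implicit in the paper's proof of Theorem 1 ($R_{2T}<\infty$) -- and you only ever invoke ($S_4$) on functions genuinely in $L^1\cap L^\infty$, whereas the paper applies it to $\varphi_s h$, which need not be bounded. You also obtain the integral kernel differently: from the $L^1\to L^\infty$ boundedness of $e^{-t\Lambda}\varphi_t^{-1}$ (Dunford--Pettis over a $\sigma$-finite measure), with the kernel of $e^{-t\Lambda}$ identified as $K_t(x,y)\varphi_t(y)$ by the approximation you sketch, while the paper gets kernel existence from ($S_1$) via Gelfand's theorem and then superposes the weighted $1\to\infty$ bound; your version delivers the kernel and the estimate in one stroke and is, if anything, slightly more self-contained. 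The exponent bookkeeping ($j'/r+j'/r'=j'$) and the decoupling of $s$ from the running time with $s=t$ only at the end match the paper's scheme, so no gap remains.
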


The proof of Theorem A uses a weighted variant of the Coulhon-Raynaud Extrapolation Theorem \cite[Prop.\,II.2.1, Prop.\,II.2.2]{VSC}.

\begin{theorem}
\label{thmE2}
Let $U^{t,\theta}$ be a two-parameter family of operators
\[U^{t,\theta}f = U^{t,\tau}U^{\tau,\theta}f,  \qquad f \in L^1 \cap L^\infty, \quad 0 \leq \theta < \tau < t \leq \infty.
\]
Suppose that for some $1 \leq p < q < r \leq \infty$, $\nu>0$
\begin{align*}
\| U^{t,\theta} f \|_p & \leq M_1 \| f \|_{p,\sqrt{\psi}}, \quad 0 \leq \psi \in L^1+L^\infty,  \quad \|f\|_{p,\sqrt{\psi}}:=\langle |f|^p \psi \rangle^{1/p},\\
 \| U^{t,\theta} f \|_r & \leq M_2 (t-\theta)^{-\nu} \|  f \|_q
\end{align*}
for all $(t,\theta)$ and $f \in L^1 \cap L^\infty.$ Then
\[
\| U^{t,\theta} f \|_r \leq M (t-\theta)^{-\nu/(1-\beta)} \| f \|_{p,\sqrt{\psi}} ,
\]
where $\beta = \frac{r}{q}\frac{q-p}{r-p}$ and $M = 2^{\nu/(1-\beta)^2} M_1 M_2^{1/(1-\beta)}.$
\end{theorem}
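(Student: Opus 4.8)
The plan is to follow the classical Coulhon--Raynaud extrapolation bootstrap; the weight introduces nothing essentially new, since $\|\cdot\|_{p,\sqrt{\psi}}$ appears only on the fixed input $f$, never on an intermediate image $U^{\tau,\theta}f$, and is therefore simply transported through the iteration in place of the ordinary $L^{p}$-norm.

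First I would record the interpolation inequality. Since $p<q<r$, one checks that $\beta=\frac{r}{q}\frac{q-p}{r-p}$ satisfies $0<\beta<1$ and $\frac1q=\frac{1-\beta}{p}+\frac{\beta}{r}$, whence H\"older's inequality (log-convexity of $L^{p}$-norms) gives $\|g\|_{q}\le\|g\|_{p}^{1-\beta}\|g\|_{r}^{\beta}$ for $g\in L^{p}\cap L^{r}$. Taking $g=U^{\tau,\theta}f$ and using the first hypothesis,
\[
\|U^{\tau,\theta}f\|_{q}\le M_{1}^{1-\beta}\,\|f\|_{p,\sqrt{\psi}}^{1-\beta}\,\|U^{\tau,\theta}f\|_{r}^{\beta},
\]
all quantities being finite for $f\in L^{1}\cap L^{\infty}$ ($\|f\|_{p,\sqrt{\psi}}<\infty$ because $\psi\in L^{1}+L^{\infty}$, and $\|U^{\tau,\theta}f\|_{r}<\infty$ by the second hypothesis); we may assume $0<\|f\|_{p,\sqrt{\psi}}<\infty$, the degenerate case being trivial.

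Now fix $0\le\theta<t$, put $\gamma:=\nu/(1-\beta)$ and $G(s):=(s-\theta)^{\gamma}\|U^{s,\theta}f\|_{r}$. Splitting at the midpoint $\tau=\tfrac{t+\theta}{2}$ and applying the second hypothesis to $U^{t,\theta}=U^{t,\tau}U^{\tau,\theta}$ gives $\|U^{t,\theta}f\|_{r}\le M_{2}2^{\nu}(t-\theta)^{-\nu}\|U^{\tau,\theta}f\|_{q}$; inserting the previous display and multiplying by $(t-\theta)^{\gamma}$, the relation $\gamma(1-\beta)=\nu$ cancels the remaining powers of $t-\theta$ and yields the self-improving estimate
\[
G(t)\le A\,G\!\left(\tfrac{t+\theta}{2}\right)^{\beta},\qquad A:=M_{1}^{1-\beta}M_{2}\,2^{\nu+\gamma\beta}\,\|f\|_{p,\sqrt{\psi}}^{1-\beta}.
\]
Iterating along $\tau_{0}=t$, $\tau_{k+1}=\tfrac{\tau_{k}+\theta}{2}$ (so $\tau_{n}-\theta=2^{-n}(t-\theta)$) gives $G(t)\le A^{\,1+\beta+\cdots+\beta^{n-1}}G(\tau_{n})^{\beta^{n}}$, with $\sum_{k\ge0}\beta^{k}=(1-\beta)^{-1}$. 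The second hypothesis bounds $G(\tau_{n})\le M_{2}(\tau_{n}-\theta)^{\gamma-\nu}\|f\|_{q}$ with $\gamma-\nu=\nu\beta/(1-\beta)>0$, so the $G(\tau_{n})$ remain bounded and $\limsup_{n}G(\tau_{n})^{\beta^{n}}\le1$ since $n\beta^{n}\to0$. Passing to the limit gives $G(t)\le A^{1/(1-\beta)}$, and since $\nu+\gamma\beta=\nu/(1-\beta)$ one has $A^{1/(1-\beta)}=M\,\|f\|_{p,\sqrt{\psi}}$ with $M=2^{\nu/(1-\beta)^{2}}M_{1}M_{2}^{1/(1-\beta)}$; unwinding $G$ gives the asserted bound $\|U^{t,\theta}f\|_{r}\le M(t-\theta)^{-\nu/(1-\beta)}\|f\|_{p,\sqrt{\psi}}$.

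Apart from the exponent bookkeeping, which is routine, the point deserving care is the passage to the limit in the iteration: one needs an a priori control on $G(\tau_{n})$ as $\tau_{n}\downarrow\theta$ so that the factor $G(\tau_{n})^{\beta^{n}}$ cannot spoil the bound, and this is precisely what the $L^{q}\to L^{r}$ short-time smoothing (second hypothesis) provides --- without it the bootstrap would be circular. A minor technicality, automatic in the intended application, is that the intermediate images $U^{\tau,\theta}f$ stay in the class ($L^{1}\cap L^{\infty}$, or a dense subspace of it) on which the hypotheses are posed.
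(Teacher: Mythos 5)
Your proof is correct and follows essentially the same route as the paper's: the midpoint splitting $U^{t,\theta}=U^{t,\frac{t+\theta}{2}}U^{\frac{t+\theta}{2},\theta}$, the H\"older interpolation $\|g\|_q\le\|g\|_p^{1-\beta}\|g\|_r^{\beta}$ combined with the weighted $L^p$ bound, and the resulting self-improving inequality, arriving at the same constant $M$. The only difference is how the bootstrap is closed --- you iterate to the limit and control the tail $G(\tau_n)^{\beta^n}$ via the $L^q\rightarrow L^r$ hypothesis, whereas the paper takes a supremum $R_{2T}$ over time gaps and absorbs it via $R_{2T}\le M^{1-\beta}R_T^{\beta}$, $R_T\le R_{2T}$ --- a cosmetic variation (yours has the merit of making the needed finiteness explicit).
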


\begin{proof}[Proof of Theorem \ref{thmE2}]

We have ($t_\theta:=\frac{t+\theta}{2}$)
\begin{align*}
\| U^{t, \theta} f \|_r & \leq M_2 (t-t_\theta)^{-\nu} \| U^{t_\theta,\theta} f \|_q \\
& \leq M_2 (t-t_\theta)^{-\nu} \| U^{t_\theta,\theta} f \|_r^\beta \;\| U^{t_\theta,\theta} f \|_p^{1-\beta} \\
& \leq M_2 M_1^{1-\beta} (t-t_\theta)^{-\nu} \| U^{t_\theta,\theta} f \|_r^\beta \;\| f \|_{p,\sqrt{\psi}}^{1-\beta},
\end{align*}
and hence
\[
(t-\theta)^{\nu/(1-\beta)} \| U^{t,\theta} f \|_r/\| f \|_{p,\sqrt{\psi}} \leq M_2 M_1^{1-\beta} 2^{\nu/(1-\beta)} \big [(t - \theta)^{\nu/(1-\beta)} \| U^{t_\theta,\theta} f \|_r\;/\| f \|_{p,\sqrt{\psi}} \big ]^\beta.
\]
Setting $R_{2 T}: = \sup_{t-\theta \in ]0,T]} \big [ (t-\theta)^{\nu/(1-\beta)} \| U^{t,\theta} f \|_r/\| f \|_{p,\sqrt{\psi}} \big ],$ we obtain from the last inequality that $R_{2 T} \leq M^{1-\beta} (R_T)^\beta.$ But $R_T \leq R_{2T}$, and so $R_{2T} \leq M.$
The proof of Theorem \ref{thmE2} is completed.
\end{proof}

\begin{proof}[Proof of Theorem A]
By \eqref{S4} and \eqref{S3},
\begin{align*}
\|e^{-t\Lambda}h\|_{1} &\leq c_0^{-1} \|\varphi_s e^{-t\Lambda} \varphi_s^{-1} \varphi_s h \|_1  \\
& \leq c_0^{-1}c_1 \|h\|_{1,\sqrt{\varphi_s}}, \qquad  h \in L^\infty_{\rm com}.
\end{align*}
The latter, \eqref{S1} and Theorem \ref{thmE2} with $\psi:=\varphi_s$ yield
$$
\|e^{-t\Lambda}f\|_{\infty} \leq Mt^{-j'}\|\varphi_s f\|_1, \quad 0<t \leq s, \quad f \in  L^\infty_{\rm com}.
$$

Note that ($S_1$) verifies assumptions of Gelfand's Theorem, which then yields that $e^{-t\Lambda}$ is an integral operator.
Therefore, taking $s=t$ in the previous estimate, we obtain \eqref{nie}.
\end{proof}

\begin{remarks}

1.~Although in the end of the proof we take $s=t$, having $s$ and $t$ ``decoupled'', here and in other proofs below, allows us to treat the weight as time independent.

2.~The proof of Theorem A, as well as the proofs of Theorems \ref{thm2} and \ref{thm2_} below, are based on ideas of
J.Nash \cite{N}.

3.~\eqref{S1} can be viewed as a variant of the Sobolev embedding property of $\Lambda$.

4.~In applications of Theorem A to concrete operators the main difficulty consists in verification of the assumption ($S_4$).
\end{remarks}

\bigskip

\section{Heat kernel $e^{-t\Lambda}(x,y)$ of $\Lambda \supset (-\Delta)^{\frac{\alpha}{2}} + \kappa|x|^{-\alpha}x \cdot \nabla$, $1<\alpha<2$}

\label{heat_sect}

We now discuss in detail our main result concerning $(-\Delta)^{\frac{\alpha}{2}} + b \cdot \nabla$,
$b(x):=\kappa |x|^{-\alpha}x$, where, recall,
$$
\kappa=\sqrt{\delta} \frac{2^{\alpha+1}}{d-\alpha}\frac{\Gamma^2(\frac{d}{4}+\frac{\alpha}{4})}{\Gamma^2(\frac{d}{4}-\frac{\alpha}{4})}, \quad 0<\delta<4.
$$

\smallskip

\textbf{1.~}Let us first construct an operator realization $-\Lambda$ of $-(-\Delta)^{\frac{\alpha}{2}} - b \cdot \nabla$ as the generator of a $C_0$ semigroup in an appropriate $L^r$. The heat kernel of $\Lambda$ will be defined as the integral kernel of the semigroup.

Throughout this paper, $(-\Delta)^{\frac{\alpha}{2}}$ is the fractional power of $-\Delta$ in the sense of Balakrishnan (see e.g.\,\cite[Sect.\,IX.11]{Yos}), where $-\Delta$ is defined as the generator of heat semigroup
in $L^p$, $1 \leq p<\infty$ or in $C_{u}$; one has $D((-\Delta)^{\frac{\alpha}{2}}) \supset D(-\Delta)$. In what follows, we write $(-\Delta)^{\frac{\alpha}{2}}_p$ or $(-\Delta)^{\frac{\alpha}{2}}_{C_u}$ when we want to emphasize that 
$(-\Delta)^{\frac{\alpha}{2}}$ is defined in $L^p$ on in $C_u$, respectively.

\smallskip

In $L^p$, $1 \leq p<\infty$, and $C_{u}$ define the approximating operators
$$
P^\varepsilon:= (-\Delta)^\frac{\alpha}{2} + b_\varepsilon \cdot \nabla+U_\varepsilon,$$
$$
 D(P^\varepsilon)=D((-\Delta)_p^\frac{\alpha}{2})=\mathcal W^{\alpha,p}:= \bigl(1+(-\Delta)^{\frac{\alpha}{2}}\bigr)^{-1}L^p,$$ 
$$D(P^\varepsilon)=D((-\Delta)^{\frac{\alpha}{2}}_{C_u})=\bigl(1+(-\Delta)^{\frac{\alpha}{2}}\bigr)^{-1}C_u, \quad \text{respectively,}
$$
where $\varepsilon>0$, 
$$b_\varepsilon(x)=\kappa|x|_\varepsilon^{-\alpha}x, \quad |x|_\varepsilon:=\sqrt{|x|^2+\varepsilon}, \qquad
U_\varepsilon(x):= \alpha \kappa \varepsilon |x|_\varepsilon^{-\alpha-2}.
$$
(The auxiliary potentials $U_\varepsilon$ are needed to carry out the estimates in the proof of Proposition \ref{main_prop4}.)

By the Hille Perturbation Theorem \cite[Ch.\,IX, sect.\,2.2]{Ka}, $-P^\varepsilon$ is the generator of a holomorphic semigroup in $L^p$, $1 \leq p<\infty$, and $C_{u}$. 
Similarly, $-\Lambda^\varepsilon$, where $\Lambda^\varepsilon:=(-\Delta)^{\frac{\alpha}{2}}+b_\varepsilon \cdot \nabla$, generates a holomorphic semigroup in $L^p$ and $C_u$ (for details, if needed, see Remark \ref{rem_hille} below). 
It is well known that $$e^{-t\Lambda^\varepsilon}L^p_+\subset L^p_+,$$ and so $e^{-tP^\varepsilon}L^p_+\subset L^p_+$. Also, $$\|e^{-tP^\varepsilon}f\|_\infty \leq \|e^{-t\Lambda^\varepsilon}f\|_\infty \leq \|f\|_\infty, \quad f \in L^p \cap L^\infty.$$

We now consider two cases:

\medskip

(a) First, let $0<\delta<1$. Since $b \cdot \nabla$ is a Rellich's perturbation of $(-\Delta)^{\frac{\alpha}{2}}$ in $L^2$ (for details, if needed, see Proposition \ref{prop_2} below), the algebraic 
sum $\Lambda:=(-\Delta)^\frac{\alpha}{2}+b\cdot\nabla$, $D(\Lambda)=\mathcal W^{\alpha,2}$, is the (minus) generator of a holomorphic semigroup in $L^2$, with the property
$$ 
e^{-t\Lambda}=s\mbox{-}L^2\mbox{-}\lim_{\varepsilon \downarrow 0}e^{-tP^\varepsilon}=s\mbox{-}L^2\mbox{-}\lim_{\varepsilon \downarrow 0}e^{-t\Lambda^\varepsilon} \quad (\text{locally uniformly in $t \geq 0$}),
$$
see Propositions \ref{prop_2} and \ref{prop_2_conv} below. Since $e^{-t\Lambda}$ is an $L^\infty$ contraction, $e^{-t\Lambda}$ extends by continuity to $C_0$ semigroup on $L^r$ for every $r \in [2,\infty[$:
$$e^{-t\Lambda_r}:=\big[e^{-t\Lambda} \upharpoonright L^2 \cap L^r\big]^{\rm clos}_{L^r \rightarrow L^r},$$
such that
$e^{-t\Lambda_r}=s\mbox{-}L^r\mbox{-}\lim_{\varepsilon \downarrow 0}e^{-tP^\varepsilon}$ 
(see, if needed, Remark \ref{rem_conv_2} below).


\smallskip

(b) For $1 \leq \delta<4$, we prove in Proposition \ref{prop4}(\textit{i}) below that there exists a sequence $\varepsilon_i \downarrow 0$ such that, for every $r \in ]r_c,\infty[$, $r_c:=\frac{2}{2-\sqrt{\delta}}$, the limit
\begin{equation}
\label{conv_def}
s\mbox{-}L^r\mbox{-}\lim_{i} e^{-tP^{\varepsilon_i}} \quad (\text{locally uniformly in $t \geq 0$}),
\end{equation}
 exists and determines a $L^\infty$ contraction, contraction $C_0$ semigroup $e^{-t\Lambda_r}$ in $L^r$; $\Lambda_r$ is an operator realization of $(-\Delta)^{\frac{\alpha}{2}} + b \cdot \nabla$ in $L^r$. 

Thus, 
while for $0<\delta<1$ \eqref{conv_def} can be viewed as a (fundamental) property of the semigroup $e^{-t\Lambda_r}$, for $1 \leq \delta < 4$ \eqref{conv_def} becomes the principal means of construction of $e^{-t\Lambda_r}$.


The semigroups $e^{-t\Lambda_r}$ are consistent: 
\begin{equation}
\label{consist}
e^{-t\Lambda_r} \upharpoonright L^r \cap L^p = e^{-t\Lambda_p} \upharpoonright L^r \cap L^p.
\end{equation}
Since $e^{-tP^{\varepsilon_i}}$ are positivity preserving, so are $e^{-t\Lambda_r}$, $r \in ]r_c,\infty[$.

By consistency, if $e^{-t\Lambda_r}$ is an integral operator for some $r$, then $e^{-t\Lambda_r}$ are integral operators for all $r \in ]r_c,\infty[$ having the \textit{same}, up to change on measure zero set in $\mathbb R^d \times \mathbb R^d$, integral kernel $e^{-t\Lambda}(x,y) \geq 0$. This integral kernel is called the \textit{heat kernel} of the operators $\Lambda_r$.

Let us also note that, by Proposition \ref{prop4}(\textit{iii}), $v=(\mu+\Lambda_r)^{-1}f$, $\mu>0$, $f \in L^r$, is a weak solution to the equation $\mu v + (-\Delta)^{\frac{\alpha}{2}} v+b \cdot \nabla v=f$.

\medskip

\textbf{2.~}We now define the desingularizing weights for $e^{-t\Lambda_r}$. 
Define constant $\beta$ by \eqref{beta_eq}.
Direct calculations show that such $\beta \in ]\alpha,d[$ exists and is unique for all $0<\delta\leq 4$. See Figure \ref{fig1}.
 This choice of $\beta$ entails that $|x|^{-d+\beta}$ is a Lyapunov function to the formal operator $\Lambda^*=(-\Delta)^{\frac{\alpha}{2}}-\nabla \cdot b$, i.e.\,$\Lambda^*|x|^{-d+\beta}=0$ (for details, see Remark \ref{lyapunov_rem} in the next section).

\begin{figure}
\begin{center}
\includegraphics[width=0.6\textwidth]{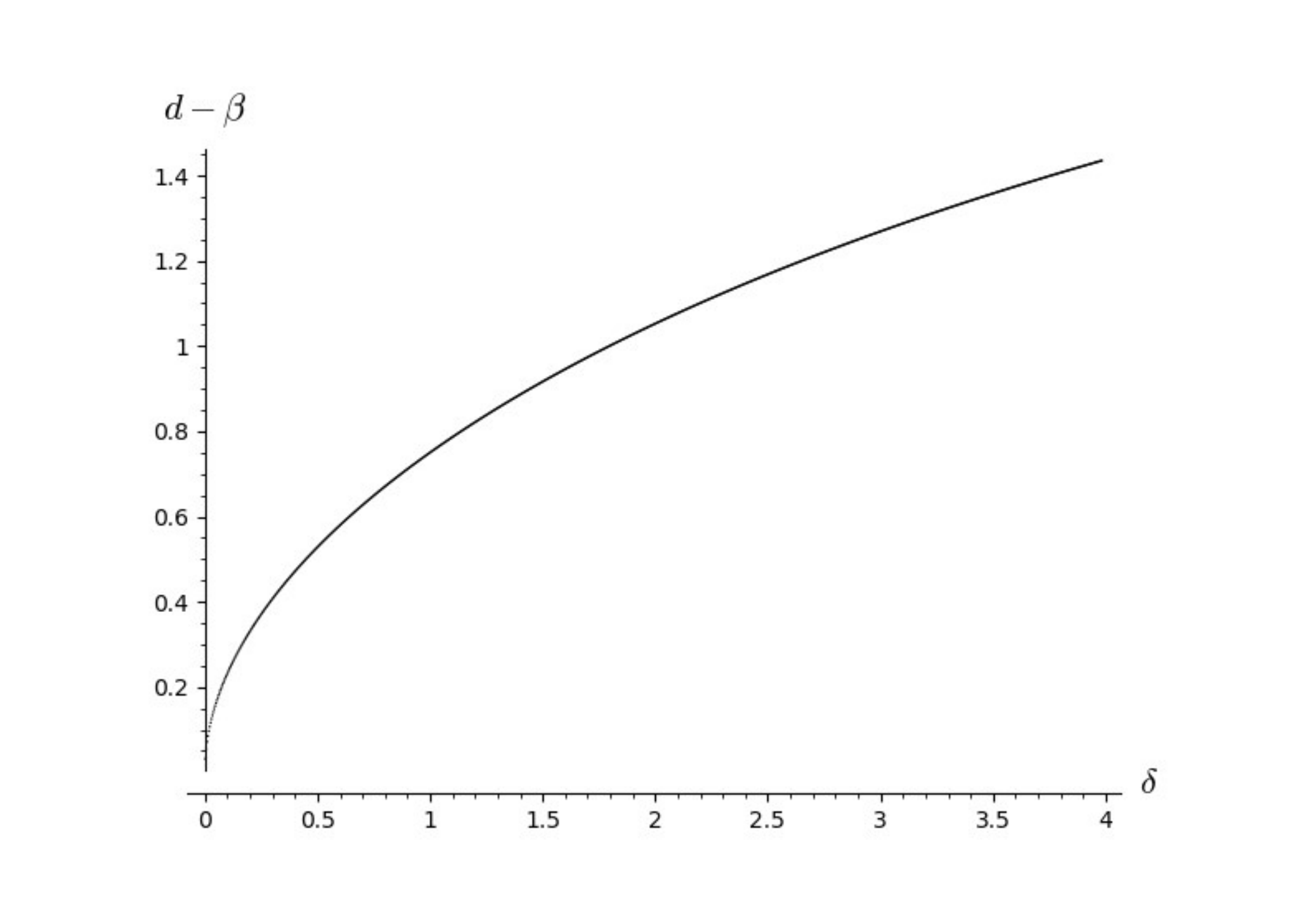}
\end{center}
\vspace*{-5mm}
\caption{The function $\delta \mapsto d-\beta$ for $d=3$ and $\alpha=\frac{3}{2}$. \label{fig1}}
\end{figure}

We define the weight:
$$
\varphi_t(y)=\eta(t^{-\frac{1}{\alpha}}|y|), \quad y \in \mathbb R^d, t>0,
$$
where $\frac{1}{2} \leq \eta$ is a $C^2(]0,\infty[)$ function such that
\[
\eta(r)=\left \{
\begin{array}{ll}
r^{-d+\beta}, & 0<r< 1,\\
\frac{1}{2}, & r\geq 2.
\end{array}
\right.
\]
Note that
$$
\varphi_t(y) \approx (t^{-\frac{1}{\alpha}}|y| \wedge 1)^{-d+\beta}.
$$

\begin{theorem}
\label{thm2}Let $0<\delta<4$. 
Then $e^{-t\Lambda_r}$, $r \in ]\frac{2}{2-\sqrt{\delta}},\infty[$ is an integral operator for each $t > 0$. Let $e^{-t\Lambda}(x,y)$ be the (common) integral kernel of $e^{-t\Lambda_r}$. Then there exists a constant $C$ such that, up to change of $e^{-t\Lambda}(x,y)$ on a measure zero set, the
weighted Nash initial estimate
\begin{equation}
\tag{$\text{\rm NIE}_w$}
e^{-t\Lambda}(x,y)\leq Ct^{-\frac{d}{\alpha}}\varphi_t(y), 
\end{equation}
is valid for all $x,y\in \mathbb R^d$, $y\neq 0$ and $t>0$.
\end{theorem}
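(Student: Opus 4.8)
The plan is to verify the hypotheses of the abstract Theorem~A for the concrete operator $\Lambda_r$, with $j'=\frac{d}{\alpha}$ and the weights $\varphi_t$ defined above, and then invoke Theorem~A directly. Working on $X=\mathbb R^d$ with Lebesgue measure and $L^p=L^p(\mathbb R^d)$, the conditions $(S_2)$ and $(S_3)$ are immediate from the explicit formula $\varphi_t(y)=\eta(t^{-1/\alpha}|y|)$: since $\eta\geq\frac12$ and $\eta$ is locally bounded above and below away from $0$ (the only possible blow-up is at $y=0$ where $\varphi_t\to\infty$, so $\varphi_t\in L^1_{\loc}$ as $-d+\beta>-d$, and $1/\varphi_t$ is bounded), we get $(S_2)$, and $\inf_{t>0,x}\varphi_t(x)\geq\frac12$ gives $(S_3)$ with $c_0=\frac12$. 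Condition $(S_1)$, namely $\|e^{-t\Lambda_r}\|_{r\to\infty}\leq ct^{-\frac{d}{\alpha r}}$, is the Sobolev-type/ultracontractivity estimate for $\Lambda_r$ and should be extracted from the construction in Section~\ref{heat_sect} (Propositions~\ref{prop_2}, \ref{prop4}) together with the known $L^\infty$-contractivity $\|e^{-t\Lambda^\varepsilon}\|_{\infty\to\infty}\leq1$ and the $L^r\to L^\infty$ smoothing of $(-\Delta)^{\alpha/2}$; concretely one interpolates a base estimate for the approximants $e^{-tP^{\varepsilon_i}}$ and passes to the limit using the strong convergence \eqref{conv_def}.

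The heart of the matter is verifying the desingularizing $(L^1,L^1)$ bound $(S_4)$:
\[
\|\varphi_s e^{-t\Lambda}\varphi_s^{-1}f\|_1\leq c_1\|f\|_1,\qquad 0<t\leq s,\ f\in L^1\cap L^\infty,
\]
with $c_1$ independent of $s$. I would prove this at the level of the smooth approximants $\Lambda^\varepsilon$ (or $P^\varepsilon$), where everything is classical, and then let $\varepsilon\downarrow0$ along the subsequence $\varepsilon_i$. Fix $s>0$; by the scaling-free philosophy emphasized in the introduction, treat $w:=\varphi_s$ as a fixed smooth-ish weight. Conjugating, $\varphi_s e^{-t\Lambda^\varepsilon}\varphi_s^{-1}$ is the semigroup generated by $-\Lambda^\varepsilon_w:=-w\,\Lambda^\varepsilon\,w^{-1}$. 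One computes $\Lambda^\varepsilon_w u = w(-\Delta)^{\alpha/2}(w^{-1}u) + b_\varepsilon\cdot\nabla u - (b_\varepsilon\cdot\nabla\log w)u$. The strategy is to show $-\Lambda^\varepsilon_w$ (plus a bounded correction) is quasi-accretive in $L^1$ uniformly in $\varepsilon$ and $s$, i.e. to produce a representation
\[
\Lambda^\varepsilon_w = -\,\mathrm{(sub\text{-}Markovian\ part)} + V^\varepsilon_s,\qquad \|V^\varepsilon_s\|_{1\to1}\leq c_1,
\]
where the sub-Markovian part is $L^1$-contractive, and then apply the Lumer–Phillips theorem to the specially constructed approximating $C_0$ semigroups as indicated in the introduction. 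The key algebraic input is precisely the Lyapunov property $\Lambda^*|x|^{-d+\beta}=0$: writing the $w$-conjugated nonlocal operator $w(-\Delta)^{\alpha/2}w^{-1}$ via its jump kernel and pairing against $\mathrm{sgn}(u)$, the "bad" terms organize into $\int K_w(x,y)\,dy$, and the Lyapunov identity (in the region $|y|<s^{1/\alpha}$ where $\varphi_s(y)=(s^{-1/\alpha}|y|)^{-d+\beta}$, a pure power, is an exact null solution of $\Lambda^*$) is exactly what forces $\int K_w(x,y)\,dy - b_\varepsilon\cdot\nabla\log w(x)$ to be bounded, with the $\varepsilon$-dependence absorbed by the auxiliary potential $U_\varepsilon$ and with the discrepancy between $\varphi_s$ and the pure power (i.e. the transition region $s^{1/\alpha}<|y|<2s^{1/\alpha}$ and the plateau) contributing only an $O(1)$ error by a direct estimate on the nonlocal kernel. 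Quasi-accretivity then yields $\|\varphi_s e^{-t\Lambda^\varepsilon}\varphi_s^{-1}f\|_1\leq e^{c_1 t/s}\|f\|_1$ (one can keep the sharper $e^{c t/s}$ with $t\leq s$, so $c_1=e^c$), uniformly in $\varepsilon$; passing to the limit along $\varepsilon_i$ using \eqref{conv_def} and Fatou gives $(S_4)$.

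With $(S_1)$–$(S_4)$ in hand, Theorem~A applies verbatim and gives that $e^{-t\Lambda_r}$ is an integral operator with kernel $e^{-t\Lambda}(x,y)$ (the common kernel by consistency \eqref{consist}) satisfying $e^{-t\Lambda}(x,y)\leq Ct^{-d/\alpha}\varphi_t(y)$ for a.e. $x,y$; positivity of $e^{-t\Lambda}(x,y)$ comes from positivity preservation of the approximants. To upgrade "a.e." to "all $x,y\in\mathbb R^d$, $y\neq0$" one uses a continuity/regularization argument: the right-hand side $Ct^{-d/\alpha}\varphi_t(y)$ is continuous on $\{y\neq0\}$, and standard parabolic regularity (or the semigroup identity $e^{-t\Lambda}=e^{-\frac t2\Lambda}e^{-\frac t2\Lambda}$ plus the $L^r\to L^\infty$ bound) makes $(x,y)\mapsto e^{-t\Lambda}(x,y)$ continuous, so the inequality propagates to every such point. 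The main obstacle is undoubtedly $(S_4)$: carrying out the conjugation of the nonlocal operator $(-\Delta)^{\alpha/2}$ by the singular weight and extracting a \emph{uniform in $s$ and $\varepsilon$} bounded correction term requires careful estimates on the jump kernel near the origin, and it is exactly here that the precise choice of $\beta$ via \eqref{beta_eq} and the role of $U_\varepsilon$ become essential — a purely $L^1$-based substitute for the $m$-sectorial/Stampacchia argument of \cite{MS} that fails in the nonlocal setting.
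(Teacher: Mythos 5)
Your overall skeleton coincides with the paper's: verify $(S_1)$--$(S_4)$ for $\Lambda_r$ with $j'=\frac{d}{\alpha}$, prove $(S_4)$ at the level of the approximants via $L^1$-accretivity and Lumer--Phillips using the Lyapunov property of $|x|^{-d+\beta}$, and pass to the limit by Fatou. But at the decisive step $(S_4)$ you substitute an unproved scheme for what is actually the paper's main technical contribution. You propose to conjugate $(-\Delta)^{\frac{\alpha}{2}}$ directly by the singular weight $w=\varphi_s$, represent $w(-\Delta)^{\frac{\alpha}{2}}w^{-1}$ through its jump kernel, and assert a decomposition into a sub-Markovian part plus a correction $V^\varepsilon_s$ bounded uniformly in $s$ and $\varepsilon$. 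That assertion is precisely the hard content and nothing in your sketch delivers it: the kernel asymmetry $\bigl(\frac{w(x)}{w(y)}-\frac{w(y)}{w(x)}\bigr)|x-y|^{-d-\alpha}$ behaves like $|x-y|^{-d-\alpha+1}$ near the diagonal, which is not locally integrable for $\alpha>1$, so even the ``correction term'' is a difference of divergent integrals requiring principal-value cancellations; in addition you must show that the remaining nonsymmetric jump operator (with unbounded ratio $w(x)/w(y)$ near the origin) really generates an $L^1$-contraction semigroup, that the generator of $\varphi_s e^{-t\Lambda^\varepsilon}\varphi_s^{-1}$ coincides with your formal conjugated operator on a core, and that the range condition in Lumer--Phillips holds --- none of this is addressed. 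The paper circumvents exactly these obstructions by its key device: the regularized weight $\phi_n=e^{-(P^\varepsilon)^*/n}\varphi_s$, for which a duality identity reduces accretivity of the weighted generator to the single pointwise bound $(P^\varepsilon)^*\varphi_s\geq -Cs^{-1}$ (this is where the choice of $\beta$ and the splitting $\varphi_s=\varphi_{(1)}+\varphi_{(u)}\in D((-\Delta)^{\frac{\alpha}{2}}_1)+D((-\Delta)^{\frac{\alpha}{2}}_{C_u})$ enter), together with a separate dense-range argument and a closedness lemma; the direct calculational route you outline is exactly what the authors state becomes unusable for $\alpha<2$.

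There is a second gap at $(S_1)$: it cannot be ``extracted from the construction'' in Propositions \ref{prop_2}, \ref{prop4}, nor from $L^\infty$-contractivity plus the free $L^r\to L^\infty$ smoothing of $(-\Delta)^{\frac{\alpha}{2}}$, because $b$ is supercritical for Kato-class perturbation theory and the bound must be uniform in $\varepsilon$. In the paper this is a self-contained Nash-type argument: multiply $(\partial_t+P^\varepsilon)u=0$ by $u^{r-1}$, use \cite[Theorem 2.1]{LS} and the fractional Hardy--Rellich inequality (this is exactly where the restriction $r\geq r_c=\frac{2}{2-\sqrt{\delta}}$ comes from), then the Nash inequality, a duality/extrapolation step through the adjoint semigroup, and only afterwards the limit $\varepsilon_i\downarrow 0$ via Proposition \ref{prop4}(\textit{i}). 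Your sketch names the right conclusion but not the mechanism, and in particular gives no reason why $r>r_c$ suffices. (Your final ``upgrade to all $x,y$'' via continuity is unnecessary: the statement is only up to a null set, as in Theorem A.)
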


Having at hand Theorem \ref{thm2}, we obtain below  the following

\begin{theorem}
\label{thm2_}
Let $0<\delta<4$. Then, up to change of $e^{-t\Lambda}(x,y)$ on a measure zero set,
$$e^{-t\Lambda}(x,y)\approx e^{-t(-\Delta)^\frac{\alpha}{2}}(x,y)\varphi_t(y) $$
for all $t>0$, $x,y \in \mathbb R^d$, $y \neq 0$.
\end{theorem}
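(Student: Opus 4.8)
The plan is to derive the two-sided bound in Theorem~\ref{thm2_} by combining the weighted Nash initial estimate from Theorem~\ref{thm2} with a Duhamel (perturbation-series) argument for the upper bound, and with an integral lower bound plus a ``$3q$''-type splitting for the lower bound, exactly along the lines sketched in the introduction. Throughout, I would work with the smooth approximations $e^{-t\Lambda^{\varepsilon_i}}$ (or $e^{-tP^{\varepsilon_i}}$), prove the bounds with constants independent of $i$, and pass to the limit using the $L^r$-convergence \eqref{conv_def} together with the consistency \eqref{consist}; this avoids differentiating the singular kernel directly.

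\textbf{Upper bound.} Starting from $(\text{NIE}_w)$, i.e. $e^{-t\Lambda}(x,y)\le Ct^{-d/\alpha}\varphi_t(y)$, I would first upgrade the diagonal-type bound to a full off-diagonal Gaussian-type bound by exploiting the explicit form of $b(x)=\kappa|x|^{-\alpha}x$ and the Duhamel identity $e^{-t\Lambda}=e^{-t(-\Delta)^{\alpha/2}}-\int_0^t e^{-(t-s)(-\Delta)^{\alpha/2}}\,b\cdot\nabla\,e^{-s\Lambda}\,ds$, written at the kernel level for the approximants. The key is that $e^{-t(-\Delta)^{\alpha/2}}(x,y)\approx t^{-d/\alpha}\wedge t|x-y|^{-d-\alpha}$ enjoys good convolution/stability properties; combining this with $(\text{NIE}_w)$ and a bootstrap on the Duhamel iterates, controlling the factor $|b(z)|\approx |z|^{-\alpha+1}$ against the weight $\varphi_s(z)\approx (s^{-1/\alpha}|z|\wedge 1)^{-d+\beta}$ (here $\beta>\alpha$ is exactly what makes the relevant spatial integrals converge near the origin — this is where \eqref{beta_eq} enters), should yield
\[
e^{-t\Lambda}(x,y)\le C\,e^{-t(-\Delta)^{\alpha/2}}(x,y)\,\varphi_t(y)
\]
for all $t>0$, $y\neq 0$. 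A mild point to watch is the regime $|x-y|$ large versus $|y|$ small or large; the weight only depends on $y$, so one separates $|y|\lesssim t^{1/\alpha}$ (where $\varphi_t(y)$ is genuinely large) from $|y|\gtrsim t^{1/\alpha}$ (where $\varphi_t(y)\approx 1$ and the bound reduces to the standard one).

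\textbf{Lower bound.} For the matching lower bound I would (i) prove the ``integral lower bound'' $\langle \varphi_s e^{-t\Lambda}\varphi_s^{-1}g\rangle \ge e^{-\mu t/s}\|g\|_1$ for $g\ge 0$, $0<t\le s$, which follows from testing the (adjoint) equation against the Lyapunov function $|x|^{-d+\beta}$ — the identity $\Lambda^*|x|^{-d+\beta}=0$ guarantees that $\langle \varphi_s e^{-t\Lambda}\varphi_s^{-1}g\rangle$ is almost conserved, with the error controlled by the $C^2$-cutoff region $r\in[1,2]$ of $\eta$; (ii) prove a ``standard'' local lower bound $e^{-t\Lambda}(x,y)\ge C_1 e^{-t(-\Delta)^{\alpha/2}}(x,y)$ valid away from the origin, again via Duhamel together with the already-established upper bound to absorb the drift term on a region where $|z|$ is bounded below; and (iii) combine (i) and (ii) by the $3q$-argument of \cite[sect.\,4.2]{BGJP}: split $e^{-t\Lambda}(x,y)=\int e^{-(t/3)\Lambda}(x,z_1)e^{-(t/3)\Lambda}(z_1,z_2)e^{-(t/3)\Lambda}(z_2,y)\,dz_1dz_2$, use the local lower bound on the outer two factors for $z_1,z_2$ in a suitable annulus, and the integral lower bound (with $s\asymp t$) on the middle factor to capture the full weight $\varphi_t(y)$ near the origin.

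\textbf{Main obstacle.} I expect the hardest part to be the lower bound near the singularity — specifically, making the $3q$-argument quantitative in a purely non-local, non-symmetric setting without scaling: one must choose the intermediate annuli and the weight parameter $s$ carefully so that the local lower bound applies (it degenerates as $z\to 0$) while the middle integral-lower-bound factor still reproduces the correct power $(t^{-1/\alpha}|y|\wedge 1)^{-d+\beta}$ rather than something weaker. Establishing the integral lower bound itself rigorously — i.e.\ controlling the defect $\frac{d}{dt}\langle \varphi_s e^{-t\Lambda^{\varepsilon_i}}\varphi_s^{-1}g\rangle$ through the commutator of $(-\Delta)^{\alpha/2}+b_{\varepsilon_i}\cdot\nabla$ with the smoothed weight, uniformly in $\varepsilon_i$, and using only $\Lambda^*$-superharmonicity of $|x|^{-d+\beta}$ on $|x|<t^{1/\alpha}$ plus boundedness of the correction elsewhere — is the technical heart, since the non-locality of $(-\Delta)^{\alpha/2}$ couples the singular region $|x|<t^{1/\alpha}$ with the regular region through the kernel's tail.
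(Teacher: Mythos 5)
Your overall strategy coincides with the paper's (weighted NIE plus Duhamel for the upper bound; integral lower bound plus a standard lower bound away from the origin plus the $3q$ argument for the lower bound, everything proved for the approximants $e^{-tP^{\varepsilon_i}}$ with $\varepsilon$-independent constants and then passed to the limit). However, as written, your upper-bound argument has a genuine gap. First, your Duhamel identity places $b\cdot\nabla$ on the unknown semigroup $e^{-s\Lambda}$, for which no gradient estimates are available; one must arrange Duhamel (in the paper, for the adjoint $e^{-t(P^\varepsilon)^*}$) so that the gradient falls on the free kernel, where Lemma \ref{claim_lem}(\textit{i}) applies. More importantly, the ``bootstrap on the Duhamel iterates using $(\text{NIE}_w)$'' does not close: when you insert $e^{-\tau(P^\varepsilon)^*}(x,\cdot)\leq C\tau^{-d/\alpha}\varphi_\tau(x)$ into the Duhamel term containing the near-origin part of the drift, the spatial integral indeed converges (this is where $\beta>\alpha$ enters, as you say), but the time integral $\int_0^t \tau^{-d/\alpha}\,d\tau$ diverges since $d/\alpha>1$. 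The paper's proof does not estimate this term by the NIE at all: it keeps $e^{-\tau(P^\varepsilon)^*}(x,\cdot)$ intact and uses the Duhamel mass identity $1+\int_0^t\langle e^{-\tau(P^\varepsilon)^*}(x,\cdot)W_\varepsilon(\cdot)\rangle d\tau=\langle e^{-t(P^\varepsilon)^*}(x,\cdot)\rangle$ together with Corollary \ref{cor_rem4}, $\langle e^{-t(P^\varepsilon)^*}(x,\cdot)\rangle\leq 2c_1\varphi_t(x)$, which is a consequence of the desingularizing $(L^1,L^1)$ bound $(S_{4,\varepsilon})$ — i.e.\ the upper bound needs the weighted $L^1$ information, not just the on-diagonal NIE. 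The far-from-origin part of the drift is then handled not by summing a perturbation series but by an absorption argument: choosing $R$ large so that this contribution is at most half of the quantity being estimated, plus a separate Case-3 analysis for $|x|>2Rt^{1/\alpha}$ using Corollary \ref{cor2} and the $L^\infty$-contractivity.

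Your lower-bound plan is essentially the paper's, and your identification of the main difficulties is accurate; just note two further ingredients the paper needs that your sketch glosses over: the conservation-of-mass identity $\langle h\rangle=\langle e^{-t\Lambda^*}h\rangle$ (Proposition \ref{lem4_lb}), which yields the lower bound on $e^{-t\Lambda}\mathbf 1_{R_t,r_t}$ used in the steps where the weight sits at the wrong variable, and the iteration (Proposition \ref{claim3_lb}) upgrading the standard lower bound from large $r$ (where the Duhamel error can be absorbed) to arbitrarily small $r$ at the cost of the constant — without it the annuli in the $3q$ argument cannot be matched to the region $|y|\lesssim t^{1/\alpha}$.
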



\begin{remark}
\label{rem_hille}
1.~In the proof that $-P^\varepsilon$ is the generator of a holomorphic semigroup in $L^p$, $1 \leq p<\infty$, and $C_{u}$, we use a well known estimate (set $A \equiv (-\Delta)^{\frac{\alpha}{2}}$)
$$
|\nabla \big(\zeta + A\big)^{-1}(x,y)| \leq C \bigl(\Real \zeta + A\bigr)^{-1+\frac{1}{\alpha}}(x,y), \quad \Real \zeta>0.
$$
Then (for $Y=L^p$ or $C_u$) $$\|b_\varepsilon \cdot \nabla \big(\zeta + A\big)^{-1}\|_{Y \rightarrow Y} \leq C\|b_\varepsilon\|_\infty  \|\big(\Real \zeta + A\big)^{-1+\frac{1}{\alpha}})\|_{Y \rightarrow Y} \leq C\|b_\varepsilon\|_\infty (\Real \zeta)^{-1+\frac{1}{\alpha}},$$
and so $\|b_\varepsilon \cdot \nabla \big(\zeta + A\big)^{-1}\|_{Y \rightarrow Y}$, $\Real \zeta \geq c_\varepsilon$, can be made arbitrarily small by selecting $c_\varepsilon$ sufficiently large.
Similar argument applies to $\|U_\varepsilon (\zeta + A)^{-1}\|_{Y \rightarrow Y}$.
It follows that the Neumann series for $$(\zeta + P^\varepsilon)^{-1}=(\zeta + A)^{-1}(1+T)^{-1}, \quad T:=(b_\varepsilon \cdot \nabla + U_\varepsilon) (\zeta + A)^{-1},$$converges in $L^p$ and $C_u$ and satisfies $\|(\zeta + P^\varepsilon)^{-1}\|_{Y \rightarrow Y} \leq C|\zeta|^{-1}$, $\Real \zeta \geq c_\varepsilon$, i.e.\,$-P^\varepsilon$ is the generator of a holomorphic semigroup.

\smallskip

2.~The same argument yields: in $L^p$, $1 \leq p<\infty$, and $C_{u}$, $$(P^\varepsilon)^*:=(-\Delta)^\frac{\alpha}{2} - \nabla \cdot b_\varepsilon + U_\varepsilon=(-\Delta)^\frac{\alpha}{2} - b_\varepsilon \cdot \nabla - W_\varepsilon,$$
$$
D((P^\varepsilon)^*)=D((-\Delta)_p^{\frac{\alpha}{2}}) \quad \text{ or } \quad D((P^\varepsilon)^*)=D((-\Delta)_{C_u}^{\frac{\alpha}{2}}), \text{ respectively},$$
where $W_\varepsilon(x)=(d-\alpha)\kappa|x|_\varepsilon^{-\alpha}$, is the (minus) generator of a holomorphic semigroup.

Then for all $\varepsilon>0$, $p \in ]1,\infty[$,
$$
\langle (\mu+P^\varepsilon)^{-1}f,g \rangle = \langle f,(\mu+(P^\varepsilon)^*)^{-1}g \rangle, \quad \mu \geq c_\varepsilon,
$$
$$
\langle e^{-tP^\varepsilon}f,g\rangle=\langle f,e^{-t(P^\varepsilon)^*}g\rangle, \quad t>0, \quad f \in L^{p}, \quad g \in L^{p'},
$$
and $e^{-t(P^\varepsilon)^*}(x,y)=e^{-tP^\varepsilon}(y,x)$, $t>0$, $x,y \in \mathbb R^d$.
\end{remark}

\begin{remark}
\label{rem_conv_2} Above we used the following elementary result:
\begin{quote}
Let $S_k : L^{p_1} \cap L^{p_2} \rightarrow L^{p_1} \cap L^{p_2}, \; 1 \leq p_1 < p_2 \leq \infty, \; k=0, 1, 2, \dots,$ be such that $\|S_k f\|_{p_i} \leq M \|f\|_{p_i}, \; i = 1, 2,$ for all $f \in L^{p_1}\cap L^{p_2}, \; k$ and some $M < \infty.$ If $\|S_k f - S_0 f \|_{p_0} \rightarrow 0$ for some $p_0 \in ]p_1, p_2[,$ then $\|S_k f - S_0 f \|_p \rightarrow 0$ for every $p \in ]p_1, p_2[.$   
\end{quote}
\end{remark}


\section{Proof of Theorem \ref{thm2}}

\label{proof_thm2_sect}

In this proof, and in other proofs below, we will be using the fractional Hardy-Rellich inequality:
\begin{equation}
\label{hardy_ineq}
\| (-\Delta)^{\frac{\alpha}{4}}f\|_2^2 \geq c^2_{d,\alpha} \||x|^{-\frac{\alpha}{2}}f\|_2^2, \quad f \in \mathcal W^{\frac{\alpha}{2},2},
\end{equation}
where, recall, 
$$
c_{d,\alpha}:=2^{\frac{\alpha}{2}}\frac{\Gamma(\frac{d}{4}+\frac{\alpha}{4})}{\Gamma(\frac{d}{4}-\frac{\alpha}{4})}
$$
(see \cite[Lemma 2.7]{KPS}).

Let us write the coefficient $\kappa$ in $b(x)=\kappa|x|^{-\alpha}x$ (cf.\,\eqref{kappa_}) in the form 
\[
\kappa=\sqrt{\delta} (d-\alpha)^{-1}2c^2_{d,\alpha}.
\]

\medskip

\noindent\textbf{1.~}We are going to verify the assumptions of Theorem A for $\Lambda_r$.

\medskip

($S_1$):  $\|e^{-t\Lambda_r}\|_{r \rightarrow \infty} \leq ct^{-\frac{j'}{r}}$, $j'=\frac{d}{\alpha}$, $r \in ]r_c,\infty[$, $r_c:=\frac{2}{2-\sqrt{\delta}}$.
 
\begin{proof}[Proof of ($S_1$)](a) First, we are going to prove ($S_1$) for $e^{-tP^\varepsilon}$ for all $r \in [r_c,\infty[$, where, recall,
$$
P^\varepsilon:= (-\Delta)^\frac{\alpha}{2} + b_\varepsilon \cdot \nabla+U_\varepsilon,
$$
with $b(x)=\kappa|x|_\varepsilon^{-\alpha}x$ and  $U_\varepsilon(x)= \alpha \kappa \varepsilon |x|_\varepsilon^{-\alpha-2}$, $|x|_\varepsilon:=\sqrt{|x|^2+\varepsilon}$.

Set $A \equiv (-\Delta)^\frac{\alpha}{2}$. Set $0 \leq u=e^{-tP^\varepsilon}f$, $f \in L^1_+ \cap L^\infty$. Then, multiplying the equation $(\partial_t + P^\varepsilon)u=0$ by $u^{r-1}$ and integrating, we obtain 
\begin{equation}
\label{main_id}
-\frac{1}{r}\frac{d}{dt}\|u\|_r^r = \langle Au, u^{r-1}\rangle + \frac{2}{r}\kappa \langle |x|_\varepsilon^{-\alpha}x \cdot \nabla u^{\frac{r}{2}},u^{\frac{r}{2}} \rangle + \langle U_\varepsilon u^{\frac{r}{2}}, u^\frac{r}{2}\rangle.
\end{equation}

In the RHS of \eqref{main_id}, we estimate
$$\langle Au, u^{r-1}\rangle \geq \frac{4}{rr'} \|A^{\frac{1}{2}}u^{\frac{r}{2}}\|_2^2 \qquad \text{ by \cite[Theorem 2.1]{LS}},$$
and, integrating by parts, find
$$\kappa \langle |x|_\varepsilon^{-\alpha}x \cdot \nabla u^{\frac{r}{2}},u^{\frac{r}{2}} \rangle=-\kappa \frac{d-\alpha}{2} \langle |x|_\varepsilon^{-\alpha}u^{\frac{r}{2}},u^{\frac{r}{2}}\rangle-\frac{1}{2}\langle U_\varepsilon u^{\frac{r}{2}}, u^\frac{r}{2}\rangle.$$
Applying the last two estimates in \eqref{main_id}, we obtain
\begin{align*}
-\frac{1}{r}\frac{d}{dt}\|u\|_r^r & \geq \frac{4}{rr'} \|A^{\frac{1}{2}}u^{\frac{r}{2}}\|_2^2-\frac{2}{r}\kappa \frac{d-\alpha}{2} \langle |x|_\varepsilon^{-\alpha}u^{\frac{r}{2}},u^{\frac{r}{2}}\rangle + \bigl(1-\frac{1}{r}\bigr)\langle U_\varepsilon u^{\frac{r}{2}}, u^\frac{r}{2}\rangle \\
& \geq \frac{4}{rr'} \|A^{\frac{1}{2}}u^{\frac{r}{2}}\|_2^2-\frac{2}{r}\kappa \frac{d-\alpha}{2} \langle |x|^{-\alpha}u^{\frac{r}{2}},u^{\frac{r}{2}}\rangle.
\end{align*}
Therefore, using the Hardy-Rellich inequality \eqref{hardy_ineq}, we obtain
\begin{equation}
\label{est_n}
\tag{$\ast$}
-\frac{d}{dt}\|u\|_r^r \geq \biggl(\frac{4}{r'} - 2\sqrt{\delta} \biggr)\|A^{\frac{1}{2}}u^{\frac{r}{2}}\|_2^2,
\end{equation}
where $\frac{2}{r'}-\sqrt{\delta}\geq 0$ (which is our assumption $r \geq \frac{2}{2-\sqrt{\delta}}$).
In particular, integrating in $t$, we obtain
\begin{equation}
\label{est_c}
\tag{$\ast\ast$}
\|u(t)\|_{r} \leq \|f\|_{r}, \quad r \in [r_c,\infty],  \quad t>0.
\end{equation}

Now, let $r=2r_c$ ($\in ]r_c,\infty[$). Using the Nash inequality $\|A^{\frac{1}{2}}h\|_2^2 \geq C_N\|h\|_2^{2+\frac{2\alpha}{d}}\|h\|_1^{-\frac{2\alpha}{d}}$,
we obtain from \eqref{est_n} (put $w:=\|u\|_r^r$)
$$
\frac{d}{dt}w^{-\frac{\alpha}{d}} \geq c_2\|f\|^{-\frac{r\alpha}{d}}_{\frac{r}{2}}, \quad c_2=C_N\frac{\alpha}{d}\bigl(\frac{4}{r'} - 2\sqrt{\delta} \bigr).
$$
Integrating this inequality, we obtain
$$
\|e^{-tP^\varepsilon}\|_{r_c \rightarrow 2r_c} \leq c_2^{-\frac{d}{2\alpha r_c}}t^{-\frac{d}{\alpha}(\frac{1}{r_c}-\frac{1}{2r_c})}, \quad t>0.
$$

Using the latter and $\|e^{-tP^\varepsilon}f\|_{\infty} \leq \|f\|_\infty$, we pass to the adjoint semigroup:
$$
\|e^{-t(P^\varepsilon)^*}\|_{(2r_c)' \rightarrow r_c'} \leq c_2^{-\frac{d}{2\alpha r_c}}t^{-\frac{d}{\alpha}(\frac{1}{r_c}-\frac{1}{2r_c})}, \qquad \|e^{-t(P^\varepsilon)^*}\|_{1 \rightarrow 1} \leq 1.
$$
Applying Theorem \ref{thmE2} (with $\psi \equiv 1$ and $p:=1$, $q:=(2r_c)'$, $r:=r_c'$), we obtain: $\|e^{-t(P^\varepsilon)^*}\|_{1 \rightarrow r_c'} \leq c t^{-\frac{d}{\alpha}\frac{1}{r_c}}$, $c \neq c(\varepsilon)$. By duality, estimate ($S_1$) for $e^{-tP^\varepsilon}$ follows ($j'=\frac{d}{\alpha}$):
\begin{equation}
\label{P_e}
\|e^{-tP^\varepsilon}\|_{r \rightarrow \infty} \leq ct^{-\frac{j'}{r}}
\end{equation}
(for $r=r_c$, and by interpolation for all $r \in [r_c,\infty[$).

\medskip

(b) Let $r \in ]r_c,\infty[$. We now pass to the limit $\varepsilon \downarrow 0$ in \eqref{P_e}. 
We have
$$
|\langle e^{-tP^\varepsilon} f,g\rangle| \leq C t^{-\frac{j'}{r}}\|f\|_r\|g\|_1, \quad f \in L^r,g \in L^1 \cap L^{\infty}.
$$
According to Proposition \ref{prop4}(\textit{i}), 
$e^{-tP^{\varepsilon_i}}f \rightarrow e^{-t\Lambda_r}f$ strongly in $L^r$ for some $\varepsilon_i \downarrow 0$ for every $f \in L^r$. Therefore,
$$
|\langle e^{-t\Lambda_r} f,g\rangle | \leq C t^{-\frac{j'}{r}}\|f\|_r\|g\|_1, \quad f \in L^r,g \in L^1 \cap L^{\infty},
$$
and hence 
$$
\|e^{-t\Lambda_r} f\|_\infty \leq C t^{-\frac{j'}{r}}\|f\|_r,
$$
i.e.\,we have proved ($S_1$).
\end{proof}

By the construction of $\varphi_s$,

$(S_2),(S_3)$: \textit{$\varphi_s^{\pm 1}\in L^1_\loc$ and $\inf_{s>0, \;x\in \mathbb R^d} \varphi_s(x) \geq \frac{1}{2}$} are valid.

\smallskip

($S_4$): \textit{There exists a constant $c_1>0$ such that, for all $r \in ]r_c,\infty[$, $0<t\leq s$ }
\[
 \|\varphi_s e^{-t\Lambda_r}\varphi_s^{-1}h\|_1\leq c_1\|h\|_{1}, \quad h\in L^1 \cap L^\infty.
\]
See the proof of ($S_4$) below.

\medskip

Thus, Theorem A applies and yields that $e^{-t\Lambda_r}$ is an integral operator for every $t>0$,
and
\begin{equation*}
e^{-t\Lambda}(x,y) \leq Ct^{-j^\prime}\varphi_t(y), \quad t>0, x,y \in \mathbb R^d, y \neq 0.
\end{equation*}
Theorem \ref{thm2} follows.

%
%

It remains to prove ($S_4$). This presents the main difficulty.

\bigskip

\noindent\textbf{2.~Proof of {\rm($S_4$)}.}
We are going to first prove ($S_4$) for approximating semigroups $e^{-tP^\varepsilon}$ by showing that the generator of the weighted semigroup $\phi_n e^{-t(\lambda + P^\varepsilon)} \phi_n^{-1}$, for appropriate regularization $\phi_n$ ($n=1,2,\dots$) of $\varphi_s$, is accretive in $L^1$ and has dense range, and so $\phi_n e^{-tP^\varepsilon} \phi_n^{-1}$ is a quasi contraction in $L^1$ ($\Rightarrow$ ($S_4$)). 

In the beginning of Section \ref{heat_sect} we introduced (in $L^1$): $$
P^\varepsilon:= (-\Delta)^\frac{\alpha}{2} + b_\varepsilon \cdot \nabla+U_\varepsilon, \qquad D(P^\varepsilon)=D((-\Delta)_1^\frac{\alpha}{2})\equiv \mathcal W^{\alpha,1}\;(=\bigl(1+(-\Delta)^{\frac{\alpha}{2}}\bigr)^{-1}L^1),$$
$$(P^\varepsilon)^*:=(-\Delta)^\frac{\alpha}{2} - \nabla \cdot b_\varepsilon + U_\varepsilon=(-\Delta)^\frac{\alpha}{2} - b_\varepsilon \cdot \nabla - W_\varepsilon, \qquad D((P^\varepsilon)^*)=D((-\Delta)_1^\frac{\alpha}{2}),$$
where $W_\varepsilon(x)=(d-\alpha)\kappa|x|_\varepsilon^{-\alpha}$.

Recall that, for each $\varepsilon>0$, both $e^{-t P^\varepsilon}$, $e^{-t(P^\varepsilon)^*}$ can be viewed as $C_0$ semigroups in  $L^1$ and $C_{u}$, with $D(P^\varepsilon)=D((P^\varepsilon)^*)$ being the same as the domain of $(-\Delta)^{\frac{\alpha}{2}}$ in $L^1$ and $C_u$, respectively. See Remark \ref{rem_hille}.

Define a regularization  of $\varphi$, $$\phi_n=e^{-\frac{(P^\varepsilon)^*}{n}}\varphi, \qquad  n=1,2,\dots,$$
where, here and in the rest of the proof, we write for brevity
$$
\varphi \equiv \varphi_s.
$$
Since  we can represent $\varphi=\varphi_{(1)} + \varphi_{(u)}$,  
$\varphi_{(1)} \in L^1$, $\varphi_{(u)} \in C_u$, in view of the previous comment 
the weights $\phi_n$ are well defined. It is clear from the definition that $\phi_n \rightarrow \varphi$ in $L^{1}_{\loc}$ as $n \rightarrow \infty$.

Note that $\phi_n \geq \frac{1}{2}$ (indeed, since $e^{-t(P^\varepsilon)^*}$ is positivity preserving and $\varphi \geq \frac{1}{2}$, we have $\phi_n \geq \frac{1}{2} e^{-\frac{(P^\varepsilon)^*}{n}}1 \geq \frac{1}{2} e^{-\frac{1}{n}((-\Delta)^{\frac{\alpha}{2}} - b_\varepsilon \cdot \nabla)} 1=\frac{1}{2}$).

\begin{remark*}
We emphasize that this choice of  the regularization  of $\varphi$ is the key observation that allows us to carry out the method in the case $\alpha<2$, cf.\,\eqref{key} below.
\end{remark*}

In $L^1$ define operators
\[
Q=\phi_n P^\varepsilon \phi_n^{-1}, \quad D(Q)=\phi_n D(P^\varepsilon)=\phi_n D((-\Delta)^\frac{\alpha}{2}), 
\]
where $\phi_n D(P^\varepsilon):=\{\phi_n u \mid u \in D(P^\varepsilon)\}$, 
\[
F_{\varepsilon,n}^t=\phi_n e^{-tP^\varepsilon}\phi_n^{-1}. 
\]
Since $\phi_n, \phi_n^{-1}\in L^\infty$, these operators are well defined.
In particular, $F^t_{\varepsilon,n}$ are bounded $C_0$ semigroups in $L^1$. Write $F^t_{\varepsilon,n}=e^{-tG}$. 

Set 
\begin{align*}
M:=&\,\phi_n(1+(-\Delta)^{\frac{\alpha}{2}})^{-1}[L^1 \cap C_{u}] \\
=&\,\phi_n (\lambda_\varepsilon+P^\varepsilon)^{-1}[L^1 \cap C_{u}], \quad 0<\lambda_\varepsilon\in \rho(-P^\varepsilon).
\end{align*}
Clearly, $M$ is a dense subspace of $L^1$, $M\subset D(Q)$ and $M\subset D(G)$. Moreover, $Q\upharpoonright M\subset G$. Indeed, for $f=\phi_n u\in M$,
\[
Gf=s\mbox{-}L^1\mbox{-}\lim_{t\downarrow 0}t^{-1}(1-e^{-tG})f=\phi_n s\mbox{-}L^1\mbox{-}\lim_{t\downarrow 0}t^{-1}(1-e^{-tP^\varepsilon})u=\phi_nP^\varepsilon u=Qf.
\]
Thus $Q\upharpoonright M$ is closable and $\tilde{Q}:=(Q\upharpoonright M)^{\rm clos}\subset G$.

\begin{proposition}
\label{dense_prop}
The range $R(\lambda_\varepsilon +\tilde{Q})$ is dense in $L^1$.
\end{proposition}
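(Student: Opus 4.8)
The plan is to show that $R(\lambda_\varepsilon+\tilde Q)$ is dense by producing, for a dense family of right-hand sides $g$, a solution $f$ of $(\lambda_\varepsilon+\tilde Q)f=g$ of the form $f=\phi_n u$ with $u=(\lambda_\varepsilon+P^\varepsilon)^{-1}\tilde g$ for a suitable $\tilde g$. Concretely, since $\tilde Q$ extends $Q\upharpoonright M$ and $M=\phi_n(\lambda_\varepsilon+P^\varepsilon)^{-1}[L^1\cap C_u]$, for $f=\phi_n u\in M$ we have $(\lambda_\varepsilon+\tilde Q)f=\phi_n(\lambda_\varepsilon+P^\varepsilon)u$. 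Thus it suffices to check that the set $\{\phi_n(\lambda_\varepsilon+P^\varepsilon)u : u\in (\lambda_\varepsilon+P^\varepsilon)^{-1}[L^1\cap C_u]\}=\phi_n[L^1\cap C_u]$ is dense in $L^1$. Since $\phi_n,\phi_n^{-1}\in L^\infty$ (indeed $\tfrac12\le\phi_n$ and $\phi_n$ is bounded because $\varphi$ is bounded and $e^{-(P^\varepsilon)^*/n}$ is an $L^\infty$-contraction on the $C_u$-part plus $L^1$-bounded on the $L^1$-part), multiplication by $\phi_n$ is a bounded bijection of $L^1$ with bounded inverse, hence a homeomorphism of $L^1$; and $L^1\cap C_u$ is dense in $L^1$. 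Therefore $\phi_n[L^1\cap C_u]$ is dense in $L^1$, which gives the claim.

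The one delicate point I would be careful about is that membership of $u$ in $D(P^\varepsilon)=\mathcal W^{\alpha,1}$ must be matched with the requirement $f=\phi_n u\in M\subset D(Q)$: here $M$ is defined precisely as $\phi_n(\lambda_\varepsilon+P^\varepsilon)^{-1}[L^1\cap C_u]$, so given $g\in\phi_n[L^1\cap C_u]$, writing $g=\phi_n h$ with $h\in L^1\cap C_u$ and setting $u:=(\lambda_\varepsilon+P^\varepsilon)^{-1}h$ — which makes sense simultaneously in $L^1$ and in $C_u$ because $\lambda_\varepsilon\in\rho(-P^\varepsilon)$ in both spaces and the resolvents are consistent (Remark \ref{rem_hille}) — we get $u\in(\lambda_\varepsilon+P^\varepsilon)^{-1}[L^1\cap C_u]$ and $f:=\phi_n u\in M$ with $(\lambda_\varepsilon+\tilde Q)f=(\lambda_\varepsilon+Q)f=\phi_n(\lambda_\varepsilon+P^\varepsilon)u=\phi_n h=g$. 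So $R(\lambda_\varepsilon+\tilde Q)\supset\phi_n[L^1\cap C_u]$, and the latter is dense.

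So the argument reduces to two routine facts: (i) $\phi_n$ and $\phi_n^{-1}$ lie in $L^\infty$, so $u\mapsto\phi_n u$ is an isomorphism of $L^1$; (ii) $L^1\cap C_u$ is dense in $L^1$ (e.g.\ it contains $C_c^\infty$). The only thing that needs a line of justification beyond these is the boundedness of $\phi_n$ from above: decompose $\varphi=\varphi_{(1)}+\varphi_{(u)}$ with $\varphi_{(1)}\in L^1$ and $\varphi_{(u)}\in C_u$; then $e^{-(P^\varepsilon)^*/n}\varphi_{(u)}\in C_u$ is bounded, while $e^{-(P^\varepsilon)^*/n}\varphi_{(1)}$ is handled by the $L^\infty$-contraction bound once one notes $(P^\varepsilon)^*=(-\Delta)^{\alpha/2}-b_\varepsilon\cdot\nabla-W_\varepsilon$ with $W_\varepsilon\ge0$, so $\|e^{-t(P^\varepsilon)^*}f\|_\infty\le\|e^{-t((-\Delta)^{\alpha/2}-b_\varepsilon\cdot\nabla)}f\|_\infty\le\|f\|_\infty$ — but in fact it is cleaner to observe directly that $\varphi\in L^\infty$ (since $\varphi_s(y)\approx(s^{-1/\alpha}|y|\wedge1)^{-d+\beta}$ is bounded away from $0$ by $\tfrac12$ and bounded above for $|y|$ away from $0$... wait, it blows up at $y=0$). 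Hence the decomposition $\varphi=\varphi_{(1)}+\varphi_{(u)}$ is genuinely needed, and the $L^1$-part of $\phi_n$ is controlled by $\|e^{-(P^\varepsilon)^*/n}\|_{1\to 1}<\infty$, which holds since $(P^\varepsilon)^*$ generates a $C_0$ semigroup in $L^1$. I do not expect any serious obstacle here; the whole point is that the clever regularization $\phi_n=e^{-(P^\varepsilon)^*/n}\varphi$ was designed so that $\phi_n^{-1}M$ is exactly the resolvent image $(\lambda_\varepsilon+P^\varepsilon)^{-1}[L^1\cap C_u]$, trivializing the density of the range.
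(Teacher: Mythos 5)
Your reduction is the same as the paper's: for $f=\phi_n u\in M$ one has $(\lambda_\varepsilon+\tilde Q)f=\phi_n(\lambda_\varepsilon+P^\varepsilon)u$, so $R(\lambda_\varepsilon+\tilde Q)\supset\phi_n[L^1\cap C_u]$, and everything hinges on the density of this set in $L^1$. Where you diverge is in how that density is obtained. The paper tests a hypothetical annihilator $v\in L^\infty$ of the range against $g=e^{\Delta/k}(\chi_m v)\in L^1\cap C_u$ and passes to the limit to conclude $\langle\phi_n\chi_m,|v|^2\rangle=0$, hence $v\equiv0$; this uses only the lower bound $\phi_n\ge\frac12$ together with $\phi_n\in L^1+C_u$. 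You instead argue that multiplication by $\phi_n$ is an isomorphism of $L^1$, which additionally requires the upper bound $\phi_n\in L^\infty$.

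That upper bound is where your write-up has a genuine gap. Your justification does not establish it: $\|e^{-(P^\varepsilon)^*/n}\|_{1\to1}<\infty$ only shows $e^{-(P^\varepsilon)^*/n}\varphi_{(1)}\in L^1$, not $L^\infty$ --- and $\varphi_{(1)}$ is precisely the unbounded (singular) part of $\varphi$, so nothing in your argument bounds $\phi_n$ near the origin. Moreover the domination you invoke is reversed: since $(P^\varepsilon)^*=\bigl[(-\Delta)^{\alpha/2}-b_\varepsilon\cdot\nabla\bigr]-W_\varepsilon$ with $W_\varepsilon\ge0$ and the semigroups are positivity preserving, one has $e^{-t(P^\varepsilon)^*}f\ge e^{-t((-\Delta)^{\alpha/2}-b_\varepsilon\cdot\nabla)}f$ for $f\ge0$, and only $\|e^{-t(P^\varepsilon)^*}\|_{\infty\to\infty}\le e^{t\|W_\varepsilon\|_\infty}$; this again says nothing about $e^{-(P^\varepsilon)^*/n}\varphi_{(1)}$. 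The fact $\phi_n\in L^\infty$ is true, and is indeed asserted in the paper when $Q$ and $F^t_{\varepsilon,n}$ are introduced, but to prove it one needs, for fixed $\varepsilon$ and $n$, the $L^1\to L^\infty$ smoothing of $e^{-t(P^\varepsilon)^*}$ (the drift $b_\varepsilon$ and potential are bounded, so its kernel is bounded by $C_\varepsilon t^{-d/\alpha}$, cf.\ \cite{BJ}); citing Corollary \ref{cor2} here would be circular, since that corollary rests on ($S_4$), whose proof contains the present proposition. So either supply this $\varepsilon$-dependent ultracontractivity, or prove density as the paper does, by duality, which needs only $\phi_n\ge\frac12$ and avoids the issue entirely; with either repair your argument closes.
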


\begin{proof}[Proof of Proposition \ref{dense_prop}]
If $\langle(\lambda_\varepsilon +\tilde{Q})h,v\rangle=0$ for all $h\in D(\tilde{Q})$ and some $v\in L^\infty$, $\|v\|_\infty=1$, then taking $h\in M$ we would have $\langle(\lambda_\varepsilon+Q)\phi_n (\lambda_\varepsilon+P^\varepsilon)^{-1}g,v\rangle=0$, $g\in L^1 \cap C_{u}$, or $\langle\phi_n g,v\rangle=0$. Choosing $g=e^\frac{\Delta}{k}(\chi_m v)$, where $\chi_m\in C^\infty_c$ with $\chi_m(x)=1$ when $x\in B(0,m)$, we would have $\lim_{k\uparrow\infty}\langle \phi_n g,v\rangle=\langle\phi_n\chi_m,|v|^2\rangle=0$, and so $v\equiv 0$. Thus,  $R(\lambda_\varepsilon +\tilde{Q})$ is dense in $L^1$.
\end{proof}

The following is the main step in the proof.

\begin{proposition}
\label{main_prop4}
There is a constant $\hat{c}=\hat{c}(d,\alpha,\delta)$ such that for every $\varepsilon>0$ and all $n \geq n(\varepsilon)$,
 \[
  \lambda+\tilde{Q} \textit{ is accretive whenever } \lambda\geq \hat{c} s^{-1},
 \] 
i.e.
$$
\Real\big\langle (\lambda + \tilde{Q})f,\frac{f}{|f|} \big\rangle \geq 0, \quad f \in D(\tilde{Q}).$$
\end{proposition}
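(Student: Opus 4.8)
The goal is to show accretivity of $\lambda + \tilde Q$ in $L^1$, i.e.\ that $\Real\langle(\lambda+\tilde Q)f,\,f/|f|\rangle\geq 0$ for $f\in D(\tilde Q)$. By density of $M$ in $D(\tilde Q)$ (with respect to the graph norm) it suffices to prove the inequality for $f=\phi_n u$ with $u\in D((-\Delta)^{\alpha/2})$ of the special regular form coming from $M$; then $\tilde Q f = \phi_n P^\varepsilon u$. The plan is to compute $\Real\langle \phi_n P^\varepsilon u,\,\phi_n u/|\phi_n u|\rangle = \Real\langle P^\varepsilon u,\,\phi_n^2 u/|\phi_n u|\rangle$ and, writing $v=\phi_n u$ so that $u=\phi_n^{-1}v$, to transfer the operator to the weighted form: one needs a lower bound on $\Real\langle \phi_n P^\varepsilon \phi_n^{-1} v,\,\mathrm{sgn}\,v\rangle$. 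Splitting $P^\varepsilon = (-\Delta)^{\alpha/2} + b_\varepsilon\cdot\nabla + U_\varepsilon$, the contribution of $(-\Delta)^{\alpha/2}$ is handled by the pointwise Kato-type inequality for the fractional Laplacian, $\Real\big((-\Delta)^{\alpha/2}v\big)\,\mathrm{sgn}\,v \geq (-\Delta)^{\alpha/2}|v|$ (in the distributional sense), which after integration gives a nonnegative term modulo the commutator with $\phi_n$; the genuinely new point is that the drift term, which in the unweighted $L^1$ setting is antisymmetric, now produces, after conjugation by $\phi_n$, the extra term $\phi_n^{-1}(b_\varepsilon\cdot\nabla\phi_n)$, and one must show this combines with $(-\Delta)^{\alpha/2}$, $U_\varepsilon$, and the commutator terms to something bounded below by $-\hat c\, s^{-1}$.

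\textbf{Key steps, in order.} First, reduce to $f=\phi_n u\in M$ and rewrite everything in terms of $v=\phi_n u$ and $\mathrm{sgn}\,v = v/|v|$ (on $\{v\neq0\}$); record the identity
$$
\phi_n P^\varepsilon \phi_n^{-1} v = (-\Delta)^{\alpha/2}v + b_\varepsilon\cdot\nabla v + U_\varepsilon v + \Phi_n v,
$$
where $\Phi_n := \phi_n\big[(-\Delta)^{\alpha/2},\phi_n^{-1}\big] + \phi_n^{-1}(b_\varepsilon\cdot\nabla\phi_n)$ collects the ``desingularizing'' terms. Second, apply the fractional Kato inequality to get $\Real\langle(-\Delta)^{\alpha/2}v,\mathrm{sgn}\,v\rangle \geq \langle(-\Delta)^{\alpha/2}|v|,1\rangle = 0$, and note $\Real\langle b_\varepsilon\cdot\nabla v,\mathrm{sgn}\,v\rangle = \int b_\varepsilon\cdot\nabla|v| = -\int(\mydiv b_\varepsilon)|v| = (d-\alpha)\kappa\int|x|_\varepsilon^{-\alpha}|v| - \alpha\kappa\varepsilon\int|x|_\varepsilon^{-\alpha-2}|v|$, so $\Real\langle b_\varepsilon\cdot\nabla v + U_\varepsilon v,\mathrm{sgn}\,v\rangle = (d-\alpha)\kappa\int|x|_\varepsilon^{-\alpha}|v| \geq 0$ (this is where $U_\varepsilon$ is designed to cancel the $\varepsilon$-term). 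Third — the crux — estimate $\Real\langle \Phi_n v,\mathrm{sgn}\,v\rangle \geq -\Real\langle|\Phi_n|\,|v|,1\rangle$ from below; here one uses that $\phi_n = e^{-(P^\varepsilon)^*/n}\varphi_s$ is, for $n\geq n(\varepsilon)$, a smooth perturbation of $\varphi_s(y)=\eta(s^{-1/\alpha}|y|)$ with $\eta(r)=r^{-d+\beta}$ near $0$, and the Lyapunov identity $\Lambda^*|x|^{-d+\beta} = (-\Delta)^{\alpha/2}|x|^{-d+\beta} - \mydiv(b|x|^{-d+\beta}) = 0$ — equivalently $(-\Delta)^{\alpha/2}|x|^{-d+\beta} = (d-\alpha)\kappa|x|^{-\alpha-d+\beta} - b\cdot\nabla|x|^{-d+\beta}$ — is exactly the cancellation that makes $\phi_n^{-1}\Phi_n\phi_n$ (the ``potential'' produced by conjugating $(P^\varepsilon)^*$ by $\varphi_s$) bounded below by $-\hat c\,s^{-1}$ rather than merely by the singular $|x|^{-\alpha}$. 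Concretely, since $\varphi_s(y) = \eta(s^{-1/\alpha}|y|)$ scales so that the homogeneous part contributes a factor $s^{-1}$, and $\eta$ is modified to a bounded function for $|y|\gtrsim s^{1/\alpha}$ where all quantities are harmless, one gets $\Phi_n \geq -\hat c\, s^{-1}$ pointwise (uniformly in $n\geq n(\varepsilon)$ and in $\varepsilon$), with $\hat c = \hat c(d,\alpha,\delta)$. Fourth, combine: $\Real\langle(\lambda+\tilde Q)f,\mathrm{sgn}\,f\rangle = \lambda\|v\|_1 + \Real\langle(-\Delta)^{\alpha/2}v,\mathrm{sgn}\,v\rangle + \Real\langle(b_\varepsilon\cdot\nabla + U_\varepsilon)v,\mathrm{sgn}\,v\rangle + \Real\langle\Phi_n v,\mathrm{sgn}\,v\rangle \geq (\lambda - \hat c\,s^{-1})\|v\|_1 \geq 0$ for $\lambda\geq\hat c\,s^{-1}$.

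\textbf{Main obstacle.} The hard part is the third step: controlling the conjugated-operator ``potential'' $\Phi_n$ from below by $-\hat c\,s^{-1}$ uniformly in $n$ and $\varepsilon$. This requires (i) a careful pointwise analysis of $\phi_n[(-\Delta)^{\alpha/2},\phi_n^{-1}]$ — the fractional Laplacian is nonlocal, so the commutator is not just derivatives of $\phi_n$ but an integral involving $\phi_n(y)/\phi_n(z) - 1$ over all $z$, and one must show the singular-at-the-origin part of this is exactly annihilated (up to the $s^{-1}$-scale bounded remainder) by the drift contribution via the Lyapunov equation \eqref{beta_eq}; and (ii) verifying that the regularization $\phi_n = e^{-(P^\varepsilon)^*/n}\varphi_s$ does not destroy this cancellation — i.e.\ that passing $(-\Delta)^{\alpha/2}$ and $b_\varepsilon\cdot\nabla$ through $e^{-(P^\varepsilon)^*/n}$ and using $(P^\varepsilon)^*\varphi_s = O(s^{-1})$-in-an-appropriate-sense yields the needed uniform bound once $n$ is large relative to $\varepsilon$. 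Here the explicit choice $\phi_n = e^{-(P^\varepsilon)^*/n}\varphi$ pays off: since $(-\Delta)^{\alpha/2}$ and $e^{-(P^\varepsilon)^*/n}$ essentially commute up to the drift, one has $P^\varepsilon\phi_n = e^{-(P^\varepsilon)^*/n}P^\varepsilon\varphi_s + [\text{lower order}]$, and $P^\varepsilon\varphi_s = \Lambda^\varepsilon\varphi_s + U_\varepsilon\varphi_s$ is controlled by the near-Lyapunov property of $\varphi_s$ — this is the point flagged in the remark preceding the proposition as ``the key observation.''
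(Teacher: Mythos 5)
You have correctly identified the ingredients that make the constant come out as $\hat c\,s^{-1}$ (the Lyapunov identity \eqref{beta_eq} for $\tilde\varphi=|x|^{-d+\beta}$, the cancellation of the $\varepsilon$-term by $U_\varepsilon$, and the need for $n\geq n(\varepsilon)$), but the route you propose — conjugate by $\phi_n$, apply a pointwise fractional Kato inequality, and bound the resulting ``potential'' $\Phi_n$ from below pointwise — has a genuine gap at exactly the step you flag as the crux. The commutator $\phi_n\big[(-\Delta)^{\alpha/2},\phi_n^{-1}\big]$ is a \emph{nonlocal operator}, not a multiplication operator, so the statement ``$\Phi_n\geq-\hat c\,s^{-1}$ pointwise'' does not parse, and the estimate actually needed, $\Real\langle\Phi_n v,\mathrm{sgn}\,v\rangle\geq-\hat c\,s^{-1}\|v\|_1$, is never derived. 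Worse, $\phi_n=e^{-(P^\varepsilon)^*/n}\varphi_s$ is not given by any explicit formula, so the ``careful pointwise analysis'' of its fractional Laplacian/commutator is not available; the Lyapunov cancellation is only computable for the explicit weight $\varphi_s$ (indeed for $\tilde\varphi$). Your attempted repair, ``$P^\varepsilon\phi_n=e^{-(P^\varepsilon)^*/n}P^\varepsilon\varphi_s+[\text{lower order}]$'', is false: $P^\varepsilon$ does not commute (even approximately) with the semigroup of its \emph{adjoint}. This is precisely the obstruction the paper points out when it says the direct calculational technique of the local case $\alpha=2$ (cf.\ Appendix \ref{app}, and [MS]) is unusable for $\alpha<2$.

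The paper's proof circumvents all of this by never conjugating and never invoking a pointwise Kato inequality. For $f=\phi_n u\in M$ one writes $\langle Qf,\frac{f}{|f|}\rangle=\lim_{t\downarrow0}t^{-1}\langle\phi_n(1-e^{-tP^\varepsilon})u,\frac{f}{|f|}\rangle$ and uses only the positivity preservation and $L^\infty$-boundedness of $e^{-tP^\varepsilon}$ (a semigroup-level Kato inequality) to get $\Real\langle Qf,\frac{f}{|f|}\rangle\geq\lim_{t\downarrow0}t^{-1}\langle(1-e^{-tP^\varepsilon})|u|,\phi_n\rangle$; then the specific choice $\phi_n=e^{-(P^\varepsilon)^*/n}\varphi$ is exploited \emph{by duality}, moving $e^{-P^\varepsilon/n}$ onto $|u|$ so that the limit equals $\langle e^{-P^\varepsilon/n}|u|,(P^\varepsilon)^*\varphi\rangle$, with $(P^\varepsilon)^*$ hitting the original, explicit $\varphi$ (interpreted in $L^1+C_u$). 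At that point the only analytic input is the pointwise bound $(P^\varepsilon)^*\varphi\geq-Cs^{-1}$, proved by the Lyapunov identity plus routine estimates on $\varphi-\tilde\varphi$ and on $-(b_\varepsilon\cdot\nabla+W_\varepsilon)\varphi$, and the bound $\|e^{-P^\varepsilon/n}\|_{1\to1}\leq e^{c\varepsilon^{-\alpha/2}/n}$, which is where $n\geq n(\varepsilon)$ enters. So while your proposal locates the right cancellation, it does not supply the mechanism (the duality trick built into the regularization $\phi_n$) that makes the argument rigorous in the nonlocal setting; as written, its central step would fail.
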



\begin{proof}[Proof of Proposition \ref{main_prop4}]
Below we will be appealing to Appendix \ref{appA__}, where we use convenient notation
$$\gamma(s):=\frac{2^s\pi^\frac{d}{2}\Gamma(\frac{s}{2})}{\Gamma(\frac{d}{2}-\frac{s}{2})}.$$ 

Recall that both $e^{-tP^\varepsilon}$, $e^{-t(P^\varepsilon)^*}$ are holomorphic in $L^1$ and $C_u$, see Remark \ref{rem_hille}. Note that we can represent $$\varphi=\varphi_{(1)} + \varphi_{(u)}, \qquad 0 \leq \varphi_{(1)} \in D((-\Delta)^{\frac{\alpha}{2}}_1), \qquad 0 \leq \varphi_{(u)} \in D((-\Delta)^{\frac{\alpha}{2}}_{C_{u}}).$$
(For example, 
fix $\xi \in C_c^\infty$, $0 \leq \xi \leq 1$, $\xi =1$ on $B(0,r)$, and put
 \begin{align*}
&\varphi_{(1)}:=(1+(-\Delta)^{\frac{\alpha}{2}})^{-1}\xi v, \quad v(x):=\bigl(1 + \frac{\gamma(\beta)}{\gamma(\beta-\alpha)}|x|^{-\alpha} \bigr)\tilde{\varphi}(x), \qquad \tilde{\varphi}(x):=(s^\frac{1}{\alpha}|x|^{-1})^{d-\beta}\\
&\varphi_{(u)}:=(1+(-\Delta)^{\frac{\alpha}{2}})^{-1}\big(1-\xi\big)v + \varphi-\tilde{\varphi} \quad \text{for appropriate $r>0$},
\end{align*}
and then use the result in Appendix \ref{appA__}. Let us note that if $\delta<1$, then one can differentiate $\varphi_{(1)}:=\varphi-\frac{1}{2}$ directly to obtain $\varphi_{(1)} \in \mathcal W^{2,1} \equiv D\big((-\Delta)_1\big)$. Since $D\big((-\Delta)_1\big) \subset D\big((-\Delta)^{\frac{\alpha}{2}}_1\big)$, the latter yields $\varphi_{(1)} \in D\big((-\Delta)^{\frac{\alpha}{2}}_1\big)$.)
Therefore,
$$(P^\varepsilon)^*\varphi\;\; (=(P^\varepsilon)^*_{L^1}\varphi_{(1)} + (P^\varepsilon)^*_{C_u}\varphi_{(u)})$$ is well defined and belongs to $L^1 + C_u=\{w+v\mid w\in L^1, v\in C_u\}$. 

Next, for $f=\phi_n u\in M$, we have
\begin{align*}
\langle Qf,\frac{f}{|f|}\rangle=&\langle \phi_nP^\varepsilon u,\frac{f}{|f|}\rangle=\lim_{t\downarrow 0}t^{-1}\langle\phi_n(1-e^{-tP^\varepsilon})u,\frac{f}{|f|}\rangle,\\
\Real\langle Qf,\frac{f}{|f|}\rangle &\geq\lim_{t\downarrow 0}t^{-1}\langle(1-e^{-tP^\varepsilon})|u|,\phi_n\rangle \tag{$\ast$} \label{key}\\
&=\lim_{t\downarrow 0}t^{-1}\langle (1-e^{-tP^\varepsilon}) e^{-\frac{P^\varepsilon}{n}}|u|,\varphi\rangle\\
&=\lim_{t\downarrow 0}t^{-1}\langle e^{-\frac{P^\varepsilon}{n}}|u|,(1-e^{-t(P^\varepsilon)^*})\varphi\rangle \\
&=\langle e^{-\frac{P^\varepsilon}{n}}|u|, (P^\varepsilon)^*\varphi\rangle.
\end{align*}

Now we are going to estimate $$\langle   e^{-\frac{P^\varepsilon}{n}}|u|,(P^\varepsilon)^*\varphi\rangle=:J$$ from below. For that, we estimate from below $(P^\varepsilon)^*\varphi=(-\Delta)^\frac{\alpha}{2}\varphi - \bigl(b_\varepsilon \cdot \nabla + W_\varepsilon\bigr)$.

\begin{remark}
\label{lyapunov_rem}
We will be using the fact that, by our choice of $\beta$ in \eqref{beta_eq}, 
 $|x|^{-d+\beta}$ is a Lyapunov function to the formal operator $\Lambda^*=(-\Delta)^{\frac{\alpha}{2}}-\nabla \cdot b$:
$$
\text{$(-\Delta)^\frac{\alpha}{2}|x|^{-d+\beta}=\frac{\gamma(\beta)}{\gamma(\beta-\alpha)}|x|^{-d+\beta-\alpha}$,
see Appendix \ref{appA__}; $(-\nabla \cdot b(x)|x|^{-d+\beta})=-(\beta-\alpha)\kappa|x|^{-d+\beta-\alpha}$},
$$
where, by the choice of $\beta$, $(\beta-\alpha)\kappa=\frac{\gamma(\beta)}{\gamma(\beta-\alpha)}$, and so $\Lambda^*|x|^{-d+\beta}=0$ ($=\Lambda^*\tilde{\varphi}$, $\tilde{\varphi}(x):=(s^\frac{1}{\alpha}|x|^{-1})^{d-\beta}$).
\end{remark}

We will be using the representation 
\begin{equation}
\label{repr7}
\tag{$\star\star\star$}
(-\Delta)^{\frac{\alpha}{2}}h = -\Delta I_{2-\alpha}h =-I_{2-\alpha}\Delta h, \quad h \in C_c^\infty,
\end{equation}
where $I_\nu \equiv (-\Delta)^{-\frac{\nu}{2}}$.
 
We have (in the sense of distributions):
\begin{align*}
\big\langle (-\Delta)^\frac{\alpha}{2}\varphi, h \rangle\;\;  & \big(= \big\langle (-\Delta)_1^\frac{\alpha}{2}\varphi_{(1)}, h \rangle  + \big\langle (-\Delta)_{C_u}^\frac{\alpha}{2}\varphi_{(u)}, h \rangle \big) \qquad 0 \leq h \in C_c^\infty \\
& =  \big\langle \varphi, (-\Delta)^\frac{\alpha}{2} h \rangle \qquad (\text{see, if needed, Remark \ref{rem_int} below})\\
& = \big\langle \tilde{\varphi}, (-\Delta)^\frac{\alpha}{2} h \rangle + \big\langle \varphi-\tilde{\varphi}, (-\Delta)^\frac{\alpha}{2} h \rangle \\
& (\text{we are using \eqref{repr7}}) \\
&= \big\langle I_{2-\alpha}\tilde{\varphi}, -\Delta h \rangle + \big\langle -\Delta(\varphi-\tilde{\varphi}), I_{2-\alpha} h \rangle \\ 
&= \big\langle -\Delta I_{2-\alpha}\tilde{\varphi}, h \rangle + \big\langle -I_{2-\alpha}\Delta(\varphi-\tilde{\varphi}), h \rangle.
\end{align*}
where (see Appendix \ref{appA__})
$
-\Delta I_{2-\alpha}\tilde{\varphi}=V\tilde{\varphi},
$ $V(x):=(\beta-\alpha)\kappa|x|^{-\alpha}=\frac{\gamma(\beta)}{\gamma(\beta-\alpha)}|x|^{-\alpha}$,
and, routine calculation shows, $$-I_{2-\alpha}\Delta(\varphi - \tilde{\varphi}) \;\bigl(\,\equiv -I_{2-\alpha}\mathbf 1_{B^c(0,s^\frac{1}{\alpha})}\Delta(\varphi - \tilde{\varphi})\,\bigr) \geq - c_0 s^{-1}$$ 
for a constant $c_0$.
Also, direct calculation shows, $$-\bigl(b_\varepsilon \cdot \nabla + W_\varepsilon\bigr)\varphi \geq -V\tilde{\varphi} - c_1 s^{-1}$$ for a constant $c_1 \neq c_1(\varepsilon)$.

Therefore,
\begin{equation}
\label{P}
(P^\varepsilon)^*\varphi=(-\Delta)^\frac{\alpha}{2}\varphi - \bigl(b_\varepsilon \cdot \nabla + W_\varepsilon\bigr)\varphi \geq - Cs^{-1}, \qquad C:= c_0 + c_1,
\end{equation}
and so
\[
J=\langle e^{-\frac{P^\varepsilon}{n}}|u|,(P^\varepsilon)^* \varphi\rangle \geq - Cs^{-1} \|e^{-\frac{P^\varepsilon}{n}}|u|\|_1\geq -Cs^{-1} \|e^{-\frac{P^\varepsilon}{n}}\|_{1\to 1}\|\phi_n^{-1}f\|_1,
\]
or due to $\phi_n \geq \frac{1}{2}$,
\[
J\geq -2Cs^{-1}\|e^{-\frac{P^\varepsilon}{n}}\|_{1\to 1}\|f\|_1.
\]
Noticing that $\|W_\varepsilon\|_\infty\leq c  \varepsilon^{-\frac{\alpha}{2}}$, $c:=\kappa(d-\alpha)$, we have $\|e^{-\frac{P^\varepsilon}{n}}\|_{1\to 1}\leq e^{ c \varepsilon^{-\frac{\alpha}{2}}n^{-1}}=1+o(n)$. Taking $\lambda =3Cs^{-1}$ we see that, for every $\varepsilon>0$ and for all $n$ larger than some $n(\varepsilon)$,
\[ 
\Real\langle(\lambda+ Q)f,\frac{f}{|f|}\rangle\geq 0 \quad \quad f\in M.
\] 
Now it is easily seen that the latter holds for $\tilde{Q}$ and all $f \in D(\tilde{Q})$. The proof of Proposition \ref{main_prop4} is completed.
\end{proof}

\begin{remark}
\label{rem_int}
Above we integrated by parts: since $ t^{-1}(1-e^{-t(-\Delta)_1^\frac{\alpha}{2}})\varphi_{(1)} \rightarrow (-\Delta)_1^\frac{\alpha}{2}\varphi_{(1)}$ strongly in $L^1$ as $t \downarrow 0$, we have ($h \in C_c^\infty$)
\begin{align*}
\big\langle (-\Delta)_1^\frac{\alpha}{2}\varphi_{(1)}, h \rangle &  = \lim_{t \downarrow 0}\langle t^{-1}(1-e^{-t(-\Delta)_1^\frac{\alpha}{2}})\varphi_{(1)},h\rangle \\
& = \lim_{t \downarrow 0}\langle \varphi_{(1)},t^{-1}(1-e^{-t(-\Delta)^\frac{\alpha}{2}}) h\rangle=\langle \varphi_{(1)},(-\Delta)^\frac{\alpha}{2}h\rangle.
\end{align*}
Arguing in the same way, we obtain $\big\langle (-\Delta)_{C_u}^\frac{\alpha}{2}\varphi_{(u)}, h \rangle=\big\langle \varphi_{(u)}, (-\Delta)^\frac{\alpha}{2}h \rangle$.
\end{remark}


We now end the proof of $(S_4)$.
The fact that $\tilde{Q}$ is closed together with Proposition \ref{dense_prop} and Proposition \ref{main_prop4}  imply $R(\lambda_\varepsilon +\tilde{Q})=L^1$ (Appendix \ref{appC}). But then, by the Lumer-Phillips Theorem (see e.g.\,\cite[Sect.\,IX.8]{Yos}), $\lambda+\tilde{Q}$ is the (minus) generator of a contraction semigroup, and $\tilde{Q}=G$ due to $\tilde{Q}\subset G$. Thus, it follows that
 \[
\label{star2}
 \|e^{-tG}\|_{1\to 1}\equiv\|\phi_ne^{-t P^\varepsilon}\phi_n^{-1}\|_{1\to 1}\leq e^{\hat{c} s^{-1}t}. \tag{$\star\star$}
 \]
Now, taking $n \rightarrow \infty$ in
$
\|\phi_ne^{-t P^\varepsilon}h\|_{1\to 1}\leq e^{\hat{c} s^{-1} t}\|\phi_n h\|_1$, $ h \in L^1 \cap L^\infty,
$
we obtain
\begin{equation}
\tag{$S_{4,\varepsilon}$}
 \|\varphi_s e^{-tP^\varepsilon}\varphi_s^{-1}h\|_1\leq e^{\hat{c} s^{-1} t}\|h\|_{1}, \quad h\in L^1 \cap L^\infty.
\end{equation}
According to Proposition \ref{prop4}(\textit{i}), 
$e^{-tP^{\varepsilon_i}}f \rightarrow e^{-t\Lambda_r}f$, $f \in L^r$ strongly in $L^r$ for some $\varepsilon_i \downarrow 0$, and hence a.e. Thus, appealing to Fatou's Lemma, we obtain ($S_4$).

\smallskip

The proof of ($S_4$) is completed. This completes the proof of Theorem \ref{thm2}. \hfill \qed

\medskip

In the course of the proof of Theorem \ref{thm2} we have verified the assumptions of Theorem A for $e^{-tP^\varepsilon}$, obtaining

\begin{corollary}
\label{cor2}
Let $0<\delta<4$. Then, for every $\varepsilon>0$,
$$
e^{-tP^\varepsilon}(x,y) \leq C t^{-j'}\varphi_t(y), \quad j'=\frac{d}{\alpha}, \quad C \neq C(\varepsilon),
$$
for all $t>0$, $x,y \in \mathbb R^d$, $y \neq 0$.
\end{corollary}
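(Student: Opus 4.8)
The statement to prove is Corollary \ref{cor2}, which asserts the weighted Nash initial estimate for the approximating semigroups $e^{-tP^\varepsilon}$ with a constant independent of $\varepsilon$.

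\medskip

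The plan is to invoke Theorem A applied to $\Lambda := P^\varepsilon$ (with the $C_0$ semigroup $e^{-tP^\varepsilon}$ in $L^r$, $r \in [r_c,\infty[$), checking that all four hypotheses ($S_1$)--($S_4$) hold for $e^{-tP^\varepsilon}$ with constants that do not depend on $\varepsilon$, and then reading off ($\text{NIE}_w$) for $e^{-tP^\varepsilon}(x,y)$ from the conclusion of Theorem A. Concretely, the four verifications are the following. First, ($S_1$) with $j' = \frac{d}{\alpha}$ and constant $c \neq c(\varepsilon)$ is precisely estimate \eqref{P_e} established in part (a) of the proof of ($S_1$) above: there the Nash inequality, the Hardy-Rellich inequality \eqref{hardy_ineq}, the integration by parts that trades $\kappa\langle|x|_\varepsilon^{-\alpha}x\cdot\nabla u^{r/2},u^{r/2}\rangle$ against $U_\varepsilon$, and an application of Theorem \ref{thmE2} to the adjoint semigroup all yield constants depending only on $d,\alpha,\delta$. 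Second, ($S_2$) and ($S_3$) hold for the weight $\varphi_s$ by construction (the weight is the same as in Theorem \ref{thm2} and does not involve $\varepsilon$), with $c_0 = \tfrac12$. Third, ($S_4$) for $e^{-tP^\varepsilon}$ is exactly estimate ($S_{4,\varepsilon}$) proved above via Propositions \ref{dense_prop} and \ref{main_prop4}: one has $\|\varphi_s e^{-tP^\varepsilon}\varphi_s^{-1}h\|_1 \leq e^{\hat c s^{-1}t}\|h\|_1$ for $0 < t \leq s$, so taking $t \leq s$ gives $c_1 = e^{\hat c}$, and crucially $\hat c = \hat c(d,\alpha,\delta)$ is independent of $\varepsilon$ (the only $\varepsilon$-dependence, in $\|e^{-P^\varepsilon/n}\|_{1\to 1} \leq e^{c\varepsilon^{-\alpha/2}n^{-1}}$, is killed by sending $n\to\infty$ at fixed $\varepsilon$ before the constant is recorded).

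\medskip

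Having all of ($S_1$)--($S_4$) for $e^{-tP^\varepsilon}$ with $\varepsilon$-independent constants $c, c_0 = \tfrac12, c_1 = e^{\hat c}$, Theorem A yields that $e^{-tP^\varepsilon}$ is an integral operator for each $t > 0$ and that its kernel satisfies $e^{-tP^\varepsilon}(x,y) \leq C t^{-j'}\varphi_t(y)$ for $\mu$-a.e.\ $x,y$, with $C = C(j', c_1, c_0) = C(d,\alpha,\delta)$ and in particular $C \neq C(\varepsilon)$. Since $\varphi_t(y) \approx (t^{-1/\alpha}|y|\wedge 1)^{-d+\beta}$ is finite and positive for all $y \neq 0$, the stated inequality holds for all $t > 0$ and all $x,y \in \mathbb{R}^d$ with $y \neq 0$ after the usual modification on a null set. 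This completes the proof.

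\medskip

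There is essentially no new obstacle here beyond what was already overcome inside the proof of Theorem \ref{thm2}: the entire content of Corollary \ref{cor2} is the observation that the verification of Theorem A's hypotheses was carried out \emph{at the level of the approximants} $e^{-tP^\varepsilon}$, with all constants tracked to be independent of $\varepsilon$, before any passage to the limit $\varepsilon\downarrow 0$. The only point that requires a moment's care — and is the nearest thing to a ``main obstacle'' — is confirming the $\varepsilon$-independence in step three: that the accretivity threshold $\hat c$ in Proposition \ref{main_prop4} comes from the bound \eqref{P} on $(P^\varepsilon)^*\varphi$, whose constants $c_0, c_1$ in the decomposition $(P^\varepsilon)^*\varphi \geq -Cs^{-1}$ were explicitly noted to satisfy $c_1 \neq c_1(\varepsilon)$, so that $\hat c = \hat c(d,\alpha,\delta)$; the transient $\varepsilon$-dependence introduced by the regularization $\phi_n = e^{-(P^\varepsilon)^*/n}\varphi$ disappears upon letting $n \to \infty$ while $\varepsilon$ is held fixed.
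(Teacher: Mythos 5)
Your proposal is correct and is essentially the paper's own argument: the paper proves Corollary \ref{cor2} by simply noting that, in the course of proving Theorem \ref{thm2}, the hypotheses ($S_1$)--($S_4$) of Theorem A were verified directly for the approximants $e^{-tP^\varepsilon}$ (via \eqref{P_e} and ($S_{4,\varepsilon}$)) with constants independent of $\varepsilon$, so Theorem A applies to $e^{-tP^\varepsilon}$ itself. Your tracking of the $\varepsilon$-independence of $\hat c$ through Proposition \ref{main_prop4} and the limit $n\to\infty$ matches the paper's reasoning.
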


The following inequalities will be needed in the proof of Theorem \ref{thm2_}.


\begin{corollary}
\label{cor_rem4}
\[
\langle e^{-t(P^\varepsilon)^*}(x,\cdot)\varphi_t(\cdot) \rangle \leq c_1 \varphi_t(x),
\]
\[
\langle e^{-t(P^\varepsilon)^*}(x,\cdot)\rangle \leq 2c_1\varphi_t(x), \qquad x \neq 0, \quad t>0, \quad c_1 \neq c_1(\varepsilon).
\]
\end{corollary}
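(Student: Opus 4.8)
The plan is to derive Corollary~\ref{cor_rem4} from Corollary~\ref{cor2} by passing to the adjoint semigroup and exploiting the explicit relation $e^{-t(P^\varepsilon)^*}(x,y)=e^{-tP^\varepsilon}(y,x)$ recorded in Remark~\ref{rem_hille}. The first inequality, $\langle e^{-t(P^\varepsilon)^*}(x,\cdot)\varphi_t(\cdot)\rangle \le c_1\varphi_t(x)$, is precisely the statement that the weighted semigroup acts boundedly: indeed,
\[
\langle e^{-t(P^\varepsilon)^*}(x,\cdot)\varphi_t(\cdot)\rangle = \big(\varphi_t\, e^{-tP^\varepsilon}\varphi_t^{-1}(\varphi_t\cdot\varphi_t^{-1})\big)^*\cdots
\]
more carefully, $\int e^{-t(P^\varepsilon)^*}(x,y)\varphi_t(y)\,dy = \int e^{-tP^\varepsilon}(y,x)\varphi_t(y)\,dy = \big(\varphi_t e^{-tP^\varepsilon} \mathbf 1\big)(x)$ in the sense that it equals the value at $x$ of $\varphi_t$ times $e^{-tP^\varepsilon}$ applied (in the dual pairing) — so I would test ($S_{4,\varepsilon}$) with approximations of a Dirac mass at $x$. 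Concretely, one applies the bound $\|\varphi_s e^{-tP^\varepsilon}\varphi_s^{-1}h\|_1 \le e^{\hat c s^{-1}t}\|h\|_1$ with $s=t$, i.e. $\|\varphi_t e^{-tP^\varepsilon}\varphi_t^{-1}h\|_1 \le c_1\|h\|_1$, rewrite it in kernel form as $\int\!\!\int \varphi_t(x) e^{-tP^\varepsilon}(x,y)\varphi_t^{-1}(y)|h(y)|\,dy\,dx \le c_1\int|h(y)|\,dy$ (using positivity of the kernel), and conclude by duality / the converse-to-Fubini argument that $\int \varphi_t(x) e^{-tP^\varepsilon}(x,y)\varphi_t^{-1}(y)\,dx \le c_1$ for a.e.\ $y$. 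Relabelling $x\leftrightarrow y$ and using $e^{-t(P^\varepsilon)^*}(x,y)=e^{-tP^\varepsilon}(y,x)$ turns this into $\int e^{-t(P^\varepsilon)^*}(x,y)\varphi_t(y)\,dy \le c_1\varphi_t(x)$, which is the first claimed inequality.

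The second inequality, $\langle e^{-t(P^\varepsilon)^*}(x,\cdot)\rangle \le 2c_1\varphi_t(x)$, then follows immediately from the first together with assumption ($S_3$)/($S_4$)-type lower bound on the weight, namely $\varphi_t \ge \tfrac12$ (equivalently $\varphi_t^{-1}\le 2$): since $e^{-t(P^\varepsilon)^*}(x,y)\ge 0$,
\[
\langle e^{-t(P^\varepsilon)^*}(x,\cdot)\rangle = \int e^{-t(P^\varepsilon)^*}(x,y)\,dy \le 2\int e^{-t(P^\varepsilon)^*}(x,y)\varphi_t(y)\,dy \le 2c_1\varphi_t(x).
\]
That $c_1$ can be taken independent of $\varepsilon$ is inherited from the $\varepsilon$-uniformity of ($S_{4,\varepsilon}$) established in the proof of Theorem~\ref{thm2} (the constant $\hat c=\hat c(d,\alpha,\delta)$ there is $\varepsilon$-free, so $c_1=e^{\hat c}$ works).

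The only genuinely delicate point is the duality step: extracting a pointwise (a.e.) bound on $y\mapsto\int\varphi_t(x)e^{-tP^\varepsilon}(x,y)\varphi_t^{-1}(y)\,dx$ from the $L^1$-operator bound. The clean way is to observe that $e^{-t(P^\varepsilon)^*}$ is a bona fide $C_0$ semigroup on $L^1$ and on $C_u$ with a jointly measurable (indeed continuous, for $t>0$, $\varepsilon>0$) kernel, so one may legitimately test against $h=\mathbf 1_E$ for measurable sets $E$ of finite measure and let $E$ shrink to a point via the Lebesgue differentiation theorem; alternatively, one works with the adjoint relation $\langle e^{-tP^\varepsilon}f,g\rangle=\langle f,e^{-t(P^\varepsilon)^*}g\rangle$ from Remark~\ref{rem_hille}, takes $f\ge 0$ with $\langle f\rangle$ normalized and supported near $x$, $g=\varphi_t$, and passes to the limit using continuity of $x\mapsto \langle e^{-t(P^\varepsilon)^*}(x,\cdot)\varphi_t\rangle$. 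Either route is routine given the regularity of $e^{-tP^\varepsilon}$ already in hand; I would present the second, as it avoids any measure-theoretic fuss and makes the $\varepsilon$-uniformity transparent.
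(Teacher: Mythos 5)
Your proposal is correct and follows essentially the paper's own route: the paper also takes $s=t$ in ($S_{4,\varepsilon}$), dualizes against $0\le h\in C_c^\infty$, and gets the second inequality from $\varphi_t\ge\tfrac12$. The only difference is the device for the pointwise step: rather than Lebesgue differentiation (which yields the bound only for a.e.\ $x$) or an appeal to continuity of $x\mapsto\langle e^{-t(P^\varepsilon)^*}(x,\cdot)\varphi_t\rangle$ that you leave unverified, the paper truncates the weight, uses that $\varphi_t\wedge n\in C_u$ so $e^{-t(P^\varepsilon)^*}(\varphi_t\wedge n)$ is a $C_u$-function that can be evaluated at $x$ by letting $h\to\delta_x$, and then passes to $n\to\infty$ by the Beppo Levi lemma --- the refinement needed to get the stated inequality for every $x\neq 0$.
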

\begin{proof}
The second inequality follows from the first inequality and $\varphi_t \geq \frac{1}{2}$.

Let us prove the first inequality. From ($S_{4,\varepsilon}$): $\|\varphi_t e^{-tP^\varepsilon}\varphi_t^{-1}\|_{1 \rightarrow 1} \leq c_1$, $c_1 \neq c_1(\varepsilon)$, it follows
$$
\big\langle \varphi_{t} \wedge n, e^{-tP^\varepsilon} h \rangle \leq \langle \varphi_t e^{-tP^\varepsilon} h \big\rangle \leq c_1 \langle \varphi_t h \rangle, \quad 0 \leq h \in C_c^\infty, \quad n=1,2,\dots
$$
so
$$
\big\langle e^{-t(P^\varepsilon)^*}(\varphi_t \wedge n), h \rangle  \leq c_1 \langle \varphi_t h \rangle.
$$
Thus, upon taking $h \rightarrow \delta_x$, $x \neq 0$
$$
\big\langle e^{-t(P^\varepsilon)^*}(x,\cdot)(\varphi_t(\cdot) \wedge n) \big\rangle = e^{-t(P^\varepsilon)^*}(\varphi_t \wedge n)(x) \leq c_1 \varphi_t(x). 
$$
Taking $n \rightarrow \infty$ using the Beppo Levi Lemma, we obtain the required.
\end{proof}


\bigskip

\section{Proof of Theorem \ref{thm2_}: The upper bound $e^{-t\Lambda}(x,y)\leq C e^{-t(-\Delta)^\frac{\alpha}{2}}(x,y)\varphi_t(y)$} 

\label{ub_sect}

Set $A \equiv (-\Delta)^\frac{\alpha}{2}$.
The upper bound will follow from a priori upper bound
\begin{equation}
\label{apr_ub}
e^{-tP^\varepsilon}(x,y)\leq Ce^{-tA}(x,y)\varphi_t(y), \quad C\neq C(\varepsilon)
\end{equation}
for all $t>0$, $x,y \in \mathbb R^d$, $y \neq 0$.

Indeed, taking \eqref{apr_ub} for granted, and using Proposition \ref{prop4}(\textit{i}): $e^{-tP^{\varepsilon_i}}f \rightarrow e^{-t\Lambda_r}f$ strongly in $L^r$ for some $\varepsilon_i \downarrow 0$ for every $f \in L^r$, $r \in ]r_c,\infty[$, we obtain
$$
\langle \mathbf{1}_{S_1},e^{-t\Lambda_r}\mathbf{1}_{S_2} \rangle \leq C\langle \mathbf{1}_{S_1},e^{-tA}\varphi_t\mathbf{1}_{S_2} \rangle
$$
for all bounded measurable $S_1$, $S_2 \subset \mathbb R^d$. Since, by Theorem \ref{thm2}, $e^{-t\Lambda_r}$ is an integral operator for every $t>0$ with integral kernel $e^{-t\Lambda(x,y)}$, we obtain by the Lebesgue Differentiation Theorem, possibly after changing $e^{-t\Lambda}(x,y)$ on a measure zero set in $\mathbb R^d \times \mathbb R^d$, that
$$
e^{-t\Lambda}(x,y) \leq Ce^{-tA}(x,y)\varphi_t(y)
$$
for all $t>0$, $x,y \in \mathbb R^d$, $y \neq 0$, and so Theorem \ref{thm2_} follows. 

\medskip

To establish \eqref{apr_ub}, by duality it suffices to prove 
\begin{equation*}
e^{-t(P^\varepsilon)^*}(x,y)\leq Ce^{-tA}(x,y)\varphi_t(x).
\end{equation*}

Let $R>1$ to be chosen later. 

\medskip

\textit{\textbf{Case 1:} $|x|, |y| \leq 2Rt^{\frac{1}{\alpha}}$.}

 Since 
\begin{equation*}
k_0^{-1} t\bigl(|x-y|^{-d-\alpha} \wedge t^{-\frac{d+\alpha}{\alpha}}\bigr) \leq e^{-tA}(x,y) \leq k_0 t\bigl(|x-y|^{-d-\alpha} \wedge t^{-\frac{d+\alpha}{\alpha}}\bigr)
\end{equation*}
for all $x,y \in \mathbb R^d$, $x \neq y$, $t>0$, for a constant $k_0=k_0(d,\alpha)>1$,
the Nash initial estimate
$e^{-t(P^\varepsilon)^*}(x,y) \leq C t^{-\frac{d}{\alpha}}\varphi_t(x)$ (see Corollary \ref{cor2}) yields
\[
e^{-t(P^\varepsilon)^*}(x,y)\leq C_R e^{-tA}(x,y)\varphi_t(x), \quad C_R\neq C_R(\varepsilon).
\]

\bigskip

To consider the other cases
we use the Duhamel formula
\begin{align*}
e^{-t(P^\varepsilon)^*} &=e^{-tA} + \int_0^t e^{-\tau(P^\varepsilon)^*}(B^t_{\varepsilon,R} + B_{\varepsilon,R}^{t,c})e^{-(t-\tau)A}d\tau \\
& =: e^{-A} + K^t_R + K_R^{t,c},
\end{align*}
where $$B^t_{\varepsilon,R}:=\mathbf{1}_{B(0,Rt^{\frac{1}{\alpha}})}B_\varepsilon, \quad B^{t,c}_{\varepsilon,R}:=\mathbf{1}_{B^c(0,Rt^{\frac{1}{\alpha}})}B_\varepsilon$$ and $$B_\varepsilon:=b_\varepsilon \cdot \nabla + W_\varepsilon$$ (recall, 
$W_\varepsilon(x)=\kappa(d-\alpha)|x|_\varepsilon^{-\alpha}$, $b_\varepsilon(x)=\kappa|x|_\varepsilon^{-\alpha}x$).

Below we prove that $K^t_R(x,y)$, $K_R^{t,c}(x,y) \leq C'_R  e^{-tA}(x,y)\varphi_t(x)$, which would yield the upper bound. We will need the following.

\begin{lemma}
\label{claim_lem}

Set
$
E^t(x,y)=t\bigl( |x-y|^{-d-\alpha-1} \wedge t^{-\frac{d+\alpha+1}{\alpha}}\bigr),
$
$E^tf(x):=\langle E^t(x,\cdot)f(\cdot)\rangle$.

Then there exist constants $k_i$ {\rm($i=1,2,3$)} such that for all $0<t<\infty$, $x$, $y \in \mathbb R^d$

{\rm (\textit{i})} $|\nabla_x e^{-tA}(x,y)| \leq k_1 E^t(x,y)$;

{\rm (\textit{ii})} $\int_0^t \langle e^{-(t-\tau)A}(x,\cdot)E^\tau(\cdot,y)\rangle d\tau \leq k_2 t^\frac{\alpha-1}{\alpha} e^{-tA}(x,y)$;

{\rm (\textit{iii})} $\int_0^t \langle E^{t-\tau}(x,\cdot) E^\tau(\cdot,y)\rangle d\tau \leq k_3 t^\frac{\alpha-1}{\alpha} E^t(x,y).$

\end{lemma}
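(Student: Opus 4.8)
The plan is to prove each of the three estimates by splitting the integration region (or the pair $(x,y)$) according to which of the two terms in the relevant ``$\wedge$'' is active, and then reducing everything to two elementary building blocks: the semigroup property $\langle e^{-(t-\tau)A}(x,\cdot)e^{-\tau A}(\cdot,y)\rangle = e^{-tA}(x,y)$ and the convolution-type bound $\langle |x-\cdot|^{-d-\alpha-1+k}|{\cdot}-y|^{-d-\alpha} \rangle \lesssim |x-y|^{-d-\alpha}$ type inequalities (with the usual caveat that exponents must stay in the integrable range, which forces the truncation at scale $t^{1/\alpha}$). Throughout I would freely use the two-sided bound $e^{-tA}(x,y) \approx t(|x-y|^{-d-\alpha} \wedge t^{-\frac{d+\alpha}{\alpha}})$ recalled in the excerpt, as well as its obvious scaling $e^{-tA}(x,y) = t^{-d/\alpha}e^{-A}(t^{-1/\alpha}x, t^{-1/\alpha}y)$, which lets me reduce (\textit{i})--(\textit{iii}) to the case $t=1$ (all three inequalities are scale-covariant: the factors $t^{\frac{\alpha-1}{\alpha}}$ are exactly what dimensional analysis dictates, since $E^t$ carries one extra power of length$^{-1}$ compared with $e^{-tA}$).

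For part (\textit{i}) I would argue that $\nabla_x e^{-A}(x,y)$ is again (comparable to) an $\alpha$-stable-type kernel but with one extra derivative: away from the diagonal $e^{-A}(x,y) \approx |x-y|^{-d-\alpha}$ is smooth and differentiating gains a factor $|x-y|^{-1}$, giving $|\nabla_x e^{-A}(x,y)| \lesssim |x-y|^{-d-\alpha-1}$; near the diagonal (say $|x-y|\le 1$) one uses that $e^{-A}(x,\cdot)$ is bounded in $C^1$ uniformly, so $|\nabla_x e^{-A}(x,y)| \lesssim 1 \le |x-y|^{-d-\alpha-1}\wedge 1$. Matching the two regimes and rescaling yields exactly $|\nabla_x e^{-tA}(x,y)| \le k_1 E^t(x,y)$. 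Part (\textit{ii}) is then a ``Duhamel with a gradient'' estimate: inserting the bounds $e^{-(t-\tau)A}(x,z)\approx (t-\tau)|x-z|^{-d-\alpha}\wedge (t-\tau)^{-d/\alpha}$ and $E^\tau(z,y)\approx \tau|z-y|^{-d-\alpha-1}\wedge \tau^{-(d+\alpha+1)/\alpha}$, I would split the $z$-integral into the region near $x$, near $y$, and far from both, bound each piece by a constant times $e^{-tA}(x,y)$ times (respectively) an integrable power of $(t-\tau)$ or $\tau$, and integrate in $\tau$ over $[0,t]$; the key gain is that $\int_0^t \tau^{-1/\alpha}\,d\tau \approx t^{(\alpha-1)/\alpha}$ converges precisely because $\alpha>1$, which is where that hypothesis enters. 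Part (\textit{iii}) is handled in the same spirit, now with the self-improving estimate $\langle E^{t-\tau}(x,\cdot)E^\tau(\cdot,y)\rangle \lesssim t^{(\alpha-1)/\alpha}E^t(x,y)$: the convolution of two kernels of order $-d-\alpha-1$ (each truncated at the appropriate scale) is again a kernel of order $-d-\alpha-1$ up to the explicit time factor, by the standard ``$3P$''-type inequality $|x-z|^{-a}\wedge(\cdot) + |z-y|^{-a}\wedge(\cdot)$ book-keeping.

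The main obstacle I anticipate is the near-diagonal bookkeeping in (\textit{ii}) and (\textit{iii}): when $z$ is close to both $x$ and $y$ (which happens when $x,y$ themselves are close), one cannot naively pull out $|x-y|^{-d-\alpha}$, and one must instead use the truncated values $t^{-d/\alpha}$, $\tau^{-(d+\alpha+1)/\alpha}$, etc., and check that the resulting local integral $\int_{|z-x|\lesssim t^{1/\alpha}} |x-z|^{-d-\alpha-1}\wedge t^{-(d+\alpha+1)/\alpha}\,dz$ is comparable to $t^{-1/\alpha}$ — i.e. that the singularity of order $d+\alpha+1 > d$ is killed exactly by the truncation, producing one negative power of $t^{1/\alpha}$ and nothing worse. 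Organizing these case splits cleanly (near $x$ / near $y$ / far from both, times $\tau$ small / $\tau$ comparable to $t$) and verifying that every exponent that appears is either $>d$ with a truncation or $<d$ without, so that all spatial integrals converge with the claimed homogeneity, is the routine-but-delicate heart of the argument; everything else is dimensional analysis plus the semigroup identity. Once Lemma \ref{claim_lem} is in hand, the bounds $K^t_R(x,y), K^{t,c}_R(x,y) \le C'_R e^{-tA}(x,y)\varphi_t(x)$ follow by feeding the pointwise bounds on $b_\varepsilon, W_\varepsilon$ (namely $|b_\varepsilon(z)|\le \kappa|z|^{-\alpha+1}$, $W_\varepsilon(z)\le \kappa(d-\alpha)|z|^{-\alpha}$) together with Corollary \ref{cor2} into the Duhamel formula and iterating, but that is the content of the next lemmas, not of this one.
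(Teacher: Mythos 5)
Your plan is correct and is in substance the standard stable-kernel argument that the paper relies on: the paper does not reprove (\textit{i}) and (\textit{ii}) at all but cites \cite{BJ} (Lemma 5 and the proof of Lemma 13), and proves (\textit{iii}) by the same ``3P''-type mechanism you invoke. The one organizational difference is that for (\textit{iii}) the paper avoids your region-by-region bookkeeping entirely: it first shows the pointwise bound $E^{t-\tau}(x,z)\wedge E^{\tau}(z,y)\le C\,E^{t}(x,y)$ (by combining the two minima over space and over time), then uses $ac\le (a\wedge c)(a+c)$ to reduce the convolution to the mass integrals $\int_0^t\langle E^{t-\tau}(x,\cdot)+E^{\tau}(\cdot,y)\rangle\,d\tau\le C_1 t^{\frac{\alpha-1}{\alpha}}$, where the only analytic input is $\langle E^{\tau}(\cdot,y)\rangle\approx\tau^{-1/\alpha}$ and $\alpha>1$; this is cleaner than splitting near $x$ / near $y$ / far, though your splitting (off-diagonal: $|x-z|\ge|x-y|/2$ or $|z-y|\ge|x-y|/2$; on-diagonal: the $L^1$--$L^\infty$ pairing with a split at $\tau=t/2$) does close correctly and your scaling reduction to $t=1$ is legitimate for this lemma. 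One caution on (\textit{i}): the inference ``$e^{-A}(x,y)\approx|x-y|^{-d-\alpha}$ is smooth, so differentiating gains a factor $|x-y|^{-1}$'' is not valid from comparability plus smoothness alone (a function comparable to $|x|^{-d-\alpha}$ may oscillate with much larger gradient); you need the actual gradient estimate for the rotationally invariant $\alpha$-stable density (e.g.\ via subordination or the explicit asymptotics), which is precisely what \cite[Lemma 5]{BJ} supplies and what the paper cites.
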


\begin{proof} For the proof of (\textit{i}) see e.g.\,\cite[Lemma 5]{BJ}, for the proof of (\textit{ii}) see \cite[proof of Lemma 13]{BJ}. Essentially the same arguments yield (\textit{iii}). For the sake of completeness, we provide the details: 
\begin{align*}
E^t(x,z) \wedge E^\tau(z,y)  & = (t|x-z|^{-d-\alpha-1} \wedge t^{-\frac{d+1}{\alpha}}) \wedge (\tau|z-y|^{-d-\alpha-1} \wedge \tau^{-\frac{d+1}{\alpha}}) \\
& \leq C_0\left(\frac{t+\tau}{2}\right)^{-\frac{d+1}{\alpha}} \wedge \biggl[(t+\tau) \left(\frac{|x-z|+|z-y|}{2}\right)^{-d-\alpha-1}\biggr]\qquad  (\text{$C_0>1$}) \\
& \leq C(t+\tau)^{-\frac{d+1}{\alpha}} \wedge \bigl[(t+\tau) (|x-y|)^{-d-\alpha-1}\bigr]= C E^{t+\tau}(x,y).
\end{align*}
Thus, using inequality $ac = (a \wedge c) (a \vee c) \leq (a \wedge c)(a+c)$ ($a,c \geq 0$), we obtain
$$
\int_0^t \langle E^{t-\tau}(x,\cdot) E^\tau(\cdot,y)\rangle d\tau \leq  C E^{t}(x,y) \int_0^t \langle E^{t-\tau}(x,\cdot) + E^{\tau}(\cdot,y) \rangle d\tau,
$$
where, routine calculation shows, $\int_0^t \langle E^{t-\tau}(x,\cdot) + E^{\tau}(\cdot,y) \rangle d\tau \leq C_1 t^\frac{\alpha-1}{\alpha}$. Put $k_3:=CC_1$.
\end{proof}

\textit{\textbf{Case 2:} $|y| > 2Rt^{\frac{1}{\alpha}}$, $0<|x| \leq |y|$.}

\begin{claim}
\label{claim_n1}
If $|y| > 2Rt^{\frac{1}{\alpha}}$, $0<|x| \leq |y|$, then
$$
K^t_R(x,y) \equiv \int_0^t \big\langle e^{-\tau(P^\varepsilon)^*}(x,\cdot)B_{\varepsilon,R}(\cdot)e^{-(t-\tau)A}(\cdot,y)\big\rangle d\tau
 \leq \hat{C} e^{-tA}(x,y)\varphi_t(x), \quad \hat{C}\neq \hat{C}(\varepsilon).
$$
\end{claim}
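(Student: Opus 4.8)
The plan is to localize the drift term to the ball where $B_{\varepsilon,R}$ is supported, use the off-diagonal bounds of Lemma~\ref{claim_lem} for $e^{-(t-\tau)A}(\cdot,y)$ there, and thereby reduce $K^t_R(x,y)$ to a space--time integral of the weighted heat kernel $e^{-\tau(P^\varepsilon)^*}(x,\cdot)$, which I then estimate by means of the weighted Nash estimate (Corollary~\ref{cor2}) and the mass bounds (Corollary~\ref{cor_rem4}). The parameter $R>1$ is fixed large at the outset. First, on $\supp B_{\varepsilon,R}=\overline{B(0,Rt^{1/\alpha})}$ and for $|y|>2Rt^{1/\alpha}$ one has $\tfrac12|y|<|z-y|<\tfrac32|y|$ and $|z-y|>R(t-\tau)^{1/\alpha}$, so $e^{-(t-\tau)A}(z,y)$ lies in the off-diagonal regime; hence, by the bounds on $e^{-sA}(\cdot,\cdot)$ and by Lemma~\ref{claim_lem}(\textit{i}),
$$
e^{-(t-\tau)A}(z,y)\le C(t-\tau)|y|^{-d-\alpha},\qquad |\nabla_z e^{-(t-\tau)A}(z,y)|\le k_1E^{t-\tau}(z,y)\le C(t-\tau)|y|^{-d-\alpha-1}
$$
for $z\in B(0,Rt^{1/\alpha})$.

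Using only the $\varepsilon$-free bounds $|b_\varepsilon(z)|\le\kappa|z|^{1-\alpha}$ and $W_\varepsilon(z)\le\kappa(d-\alpha)|z|^{-\alpha}$, together with $|z|^{1-\alpha}\le Rt^{1/\alpha}|z|^{-\alpha}$ and $|y|^{-1}\le(2Rt^{1/\alpha})^{-1}$ on this region, the two parts of $B_{\varepsilon,R}e^{-(t-\tau)A}(\cdot,y)$ collapse into
$$
|B_{\varepsilon,R}(z)\,e^{-(t-\tau)A}(z,y)|\le C_1\,\mathbf 1_{B(0,Rt^{1/\alpha})}(z)\,|z|^{-\alpha}(t-\tau)|y|^{-d-\alpha},\qquad C_1\ne C_1(\varepsilon),
$$
so that, writing $\Phi_t(\tau):=\big(e^{-\tau(P^\varepsilon)^*}[\mathbf 1_{B(0,Rt^{1/\alpha})}|\cdot|^{-\alpha}]\big)(x)$,
$$
K^t_R(x,y)\le C_1|y|^{-d-\alpha}\int_0^t(t-\tau)\,\Phi_t(\tau)\,d\tau .
$$
Since $0<|x|\le|y|$ and $|y|>2Rt^{1/\alpha}$, one checks in both the on- and the off-diagonal cases that $e^{-tA}(x,y)\gtrsim t|y|^{-d-\alpha}$, i.e.\ $|y|^{-d-\alpha}\lesssim t^{-1}e^{-tA}(x,y)$; hence Claim~\ref{claim_n1} reduces to proving $\int_0^t(t-\tau)\,\Phi_t(\tau)\,d\tau\le C'\,t\,\varphi_t(x)$.

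\textbf{Estimating $\Phi_t(\tau)$.} For the crude bound I would split the ball at radius $\tau^{1/\alpha}$: on $B(0,\tau^{1/\alpha})$ use $e^{-\tau(P^\varepsilon)^*}(x,z)\le C\tau^{-d/\alpha}\varphi_\tau(x)$ (Corollary~\ref{cor2}) together with $\int_{B(0,\tau^{1/\alpha})}|z|^{-\alpha}\,dz\lesssim\tau^{(d-\alpha)/\alpha}$, and on $\{\tau^{1/\alpha}<|z|<Rt^{1/\alpha}\}$, where $|z|^{-\alpha}<\tau^{-1}$, use $\langle e^{-\tau(P^\varepsilon)^*}(x,\cdot)\rangle\le 2c_1\varphi_\tau(x)$ (Corollary~\ref{cor_rem4}). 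Since $\varphi_\tau(x)\le\varphi_t(x)$ for $\tau\le t$, this gives $\Phi_t(\tau)\lesssim\varphi_t(x)\,\tau^{-1}$, uniformly.

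\textbf{The obstacle.} The bound $\varphi_t(x)\tau^{-1}$ is not integrable against $(t-\tau)\,d\tau$ near $\tau=0$, so the crux of the proof is a genuinely sharper estimate of $\Phi_t(\tau)$ for small $\tau$, one recording that the mass of $e^{-\tau(P^\varepsilon)^*}(x,\cdot)$ over $B(0,Rt^{1/\alpha})$ is small when $x$ is away from that ball. I expect the argument to distinguish $|x|\le 4Rt^{1/\alpha}$ --- where the weight $\varphi_t(x)\gtrsim(t^{-1/\alpha}|x|\wedge1)^{-d+\beta}$ is available, the dyadic split must be performed at the scale dictated by $|x|$, and on the range of $\tau$ with $B(0,Rt^{1/\alpha})\subset B(0,2R'\tau^{1/\alpha})$ one calls on the a priori estimate already established in Case~1 --- from $|x|>4Rt^{1/\alpha}$, where $\varphi_t(x)\asymp1$, $x$ is separated from $\supp B_{\varepsilon,R}$, and the small-$\tau$ contribution is controlled by combining the off-diagonal decay of $e^{-(t-\tau)A}(\cdot,y)$ with Lemma~\ref{claim_lem}(\textit{ii}) and the mass bound of Corollary~\ref{cor_rem4}. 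Keeping every constant independent of $\varepsilon$ --- whence only the $\varepsilon$-free bounds on $b_\varepsilon,W_\varepsilon$ may be used --- is exactly what forces the argument to confront the $|z|^{-\alpha}$ singularity at the origin directly, and reconciling that singularity with the small-time behaviour of $e^{-\tau(P^\varepsilon)^*}(x,\cdot)$ is where I expect the main difficulty to lie.
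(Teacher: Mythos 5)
Your reduction is sound as far as it goes: the localization to $B(0,Rt^{1/\alpha})$, the collapse of the drift and potential parts of $B_{\varepsilon,R}e^{-(t-\tau)A}(\cdot,y)$ into a single term, and the observation $e^{-tA}(x,y)\gtrsim t|y|^{-d-\alpha}$ for $0<|x|\le|y|$, $|y|>2Rt^{1/\alpha}$ are all correct. But the proof stops precisely where the claim still has to be proved. Your pointwise bound $\Phi_t(\tau)\lesssim\varphi_t(x)\tau^{-1}$ is, as you say yourself, not integrable against $(t-\tau)\,d\tau$, and the closing discussion is a list of guesses rather than an argument. Moreover, the sharpened small-$\tau$ analysis you anticipate is not a viable route: to make the mass of $e^{-\tau(P^\varepsilon)^*}(x,\cdot)$ over $B(0,Rt^{1/\alpha})$ small for $x$ away from the ball you would need off-diagonal decay of $e^{-\tau(P^\varepsilon)^*}$, which is exactly the upper bound being proved; Corollary \ref{cor2} and Corollary \ref{cor_rem4} carry no such decay, so that part of your sketch is essentially circular. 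This is a genuine gap.

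The idea you are missing is not a finer estimate of $\Phi_t(\tau)$ but an exact identity controlling its time integral all at once. Keep the factor $e^{-(t-\tau)A}(\cdot,y)$ instead of replacing it by its supremum: bound $E^{t-\tau}(\cdot,y)\le C e^{-(t-\tau)A}(\cdot,y)|\cdot-y|^{-1}$ and use $\mathbf 1_{B(0,Rt^{1/\alpha})}(\cdot)|\cdot-y|^{-1}\le|\cdot|^{-1}$, so that the drift part becomes a constant multiple of $W_\varepsilon$ (note $|\cdot|_\varepsilon^{-\alpha}$, not $|\cdot|^{-\alpha}$ --- keep the regularization, since $|b_\varepsilon(z)|=\kappa|z|_\varepsilon^{-\alpha}|z|$); then $\mathbf 1_{B(0,Rt^{1/\alpha})}(\cdot)e^{-(t-\tau)A}(\cdot,y)\le\tilde c\,e^{-tA}(x,y)$ pulls the target kernel out of the integral. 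What remains is $\int_0^t\langle e^{-\tau(P^\varepsilon)^*}(x,\cdot)W_\varepsilon(\cdot)\rangle\,d\tau$, and the Duhamel formula applied to the constant function $1$ gives the identity $1+\int_0^t\langle e^{-\tau(P^\varepsilon)^*}(x,\cdot)W_\varepsilon(\cdot)\rangle\,d\tau=\langle e^{-t(P^\varepsilon)^*}(x,\cdot)\rangle$, which Corollary \ref{cor_rem4} bounds by $2c_1\varphi_t(x)$, with constants independent of $\varepsilon$. This single identity replaces the entire small-$\tau$ analysis you were struggling with; in particular, once your weight is $\mathbf 1_{B(0,Rt^{1/\alpha})}W_\varepsilon$ rather than $\mathbf 1_{B(0,Rt^{1/\alpha})}|\cdot|^{-\alpha}$, it also finishes your version of the reduction, since $\int_0^t(t-\tau)\langle e^{-\tau(P^\varepsilon)^*}(x,\cdot)\mathbf 1_{B(0,Rt^{1/\alpha})}W_\varepsilon\rangle\,d\tau\le t\cdot 2c_1\varphi_t(x)$.
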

\begin{proof}
Claim \ref{claim_n1} clearly follows from

{\rm(\textit{j})} $
\int_0^t  \langle e^{-\tau(P^\varepsilon)^*}(x,\cdot)\mathbf{1}_{B(0,Rt^{\frac{1}{\alpha}})}(\cdot)W_\varepsilon(\cdot) e^{-(t-\tau)A}(\cdot,y)\rangle d\tau \leq c_4 e^{-tA}(x,y) \varphi_t(x),
$

\noindent and, in view of Lemma \ref{claim_lem}(\textit{i}), from

{\rm(\textit{jj})} $\int_0^t \langle e^{-\tau(P^\varepsilon)^*}(x,\cdot)\mathbf{1}_{B(0,Rt^{\frac{1}{\alpha}})}(\cdot)Z_\varepsilon(\cdot) E^{t-\tau}(\cdot,y)\rangle d\tau \leq c_3 e^{-tA}(x,y) \varphi_t(x)$, where $Z_\varepsilon(x):=|x|_\varepsilon^{-\alpha}|x|$.

Let us prove (\textit{jj}):
\begin{align*}
&\int_0^t \langle e^{-\tau(P^\varepsilon)^*}(x,\cdot)\mathbf{1}_{B(0,Rt^{\frac{1}{\alpha}})}(\cdot)Z_\varepsilon(\cdot)E^{t-\tau}(\cdot,y)\rangle d\tau \\
& (\text{we are using $E^{t-\tau}(\cdot,y) \leq Ce^{-(t-\tau)A}(\cdot,y)|\cdot-y|^{-1}$}) \\
&\leq C \int_0^t   \langle e^{-\tau(P^\varepsilon)^*}(x,\cdot)\mathbf{1}_{B(0,Rt^{\frac{1}{\alpha}})}(\cdot)Z_\varepsilon(\cdot)e^{-(t-\tau)A}(\cdot,y)|\cdot-y|^{-1}\rangle d\tau \\
& (\text{we are using $\mathbf{1}_{B(0,Rt^{\frac{1}{\alpha}})}(\cdot)|\cdot-y|^{-1} \leq |\cdot|^{-1}$}) \\
& \leq C'  \int_0^t   \langle e^{-\tau(P^\varepsilon)^*}(x,\cdot)\mathbf{1}_{B(0,Rt^{\frac{1}{\alpha}})}(\cdot)W_\varepsilon(\cdot)e^{-(t-\tau)A}(\cdot,y)\rangle d\tau \\
& (\text{we are using $\mathbf{1}_{B(0,Rt^{\frac{1}{\alpha}})}(\cdot)e^{-(t-\tau)A}(\cdot,y) \leq \tilde{c}e^{-tA}(x,y)$}) \\
& \leq C''e^{-tA}(x,y) \int_0^t   \langle e^{-\tau(P^\varepsilon)^*}(x,\cdot)\mathbf{1}_{B(0,Rt^{\frac{1}{\alpha}})}(\cdot)W_\varepsilon(\cdot)\rangle d\tau.
\end{align*}
According to the Duhamel formula $e^{-t(P^\varepsilon)^*}=e^{-tA} + \int_0^t e^{-\tau(P^\varepsilon)^*}(b_\varepsilon \cdot \nabla + W_\varepsilon)e^{-(t-\tau)A}d\tau$,
$$1+\int_0^t \langle e^{-\tau(P^\varepsilon)^*}(x,\cdot) W_\varepsilon(\cdot)\rangle d\tau=\langle e^{-t(P^\varepsilon)^*}(x,\cdot)\rangle.$$
Using the inequality $\langle e^{-t(P^\varepsilon)^*}(x,\cdot)\rangle\leq 2c_1 \varphi_t(x)$ from Corollary \ref{cor_rem4}, it is seen that 
\begin{equation*}
\int_0^t \langle e^{-\tau(P^\varepsilon)^*}(x,\cdot) W_\varepsilon(\cdot)\rangle d\tau  \leq 2c_1 \varphi_t(x).
\end{equation*}
The latter and the previous estimate yield (\textit{jj}).
Incidentally, we also have proved (\textit{j}).
\end{proof}

\begin{claim}
\label{claim_n2}
If $|y| > 2Rt^{\frac{1}{\alpha}}$, $0<|x| \leq |y|$, then
$$
K_R^{t,c}(x,y) \equiv \int_0^t \langle e^{-\tau(P^\varepsilon)^*}(x,\cdot)B^c_{\varepsilon,R}(\cdot)e^{-(t-\tau)A}(\cdot,y)\rangle d\tau
 \leq C e^{-tA}(x,y)\varphi_t(x), \qquad C \neq C(\varepsilon).$$
\end{claim}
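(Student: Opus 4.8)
\textbf{Plan for the proof of Claim \ref{claim_n2}.}
The plan is to bound the ``far'' term $K_R^{t,c}$, where the drift and potential are sampled only outside the ball $B(0,Rt^{1/\alpha})$. First I would split $B_\varepsilon = b_\varepsilon\cdot\nabla + W_\varepsilon$ and, using Lemma \ref{claim_lem}(\textit{i}) to pass $\nabla e^{-(t-\tau)A}$ to $E^{t-\tau}$, reduce matters to estimating the single integral
\[
\int_0^t \big\langle e^{-\tau(P^\varepsilon)^*}(x,\cdot)\,\mathbf{1}_{B^c(0,Rt^{1/\alpha})}(\cdot)\,|\cdot|_\varepsilon^{-\alpha}\,\big(|\cdot-y|^{-1}\vee 1\big)\,e^{-(t-\tau)A}(\cdot,y)\big\rangle\,d\tau,
\]
since on $B^c(0,Rt^{1/\alpha})$ one has $Z_\varepsilon(\cdot)=|\cdot|_\varepsilon^{-\alpha}|\cdot|\le |\cdot|_\varepsilon^{-\alpha}|\cdot|\lesssim|\cdot|^{-\alpha+1}$ and, crucially, $|z|^{-\alpha}\le (Rt^{1/\alpha})^{-\alpha}= R^{-\alpha}t^{-1}$ for $z$ in this region. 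That last bound turns the potential into a harmless factor $R^{-\alpha}t^{-1}$, so the remaining task is to bound $R^{-\alpha}t^{-1}\int_0^t\langle e^{-\tau(P^\varepsilon)^*}(x,\cdot)\,(\text{convolution kernels})\rangle\,d\tau$.

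Next I would use Corollary \ref{cor2}, the weighted Nash initial estimate $e^{-\tau(P^\varepsilon)^*}(x,z)\le C\tau^{-d/\alpha}\varphi_\tau(x)$ (in the dual form, i.e.\ for $e^{-\tau(P^\varepsilon)^*}$ acting in the first variable), on the ``small time'' part $\tau\le t/2$, and on the ``large time'' part $\tau\ge t/2$ I would instead use that $e^{-(t-\tau)A}(\cdot,y)$ is then sampled at short times $t-\tau\le t/2$, where the Gaussian-type bound $e^{-(t-\tau)A}(z,y)\lesssim (t-\tau)|z-y|^{-d-\alpha}$ together with $|z-y|\gtrsim |y|$ (which holds in Case 2 since $|z|\le Rt^{1/\alpha}\le \tfrac12|y|\le\tfrac12|x|+\dots$, so $|z-y|\ge|y|-|z|\ge\tfrac12|y|$... wait, here $z\in B^c$, so I must instead use the decay of $e^{-(t-\tau)A}(z,y)$ in $|z-y|$ and integrate in $z$). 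Concretely, after inserting the bound on the potential I would apply Lemma \ref{claim_lem}(\textit{ii}) and (\textit{iii}) to perform the $\tau$-integration of the remaining convolutions of $e^{-\tau A}$- and $E^\tau$-type kernels against $e^{-(t-\tau)A}(\cdot,y)$, which produces a factor $t^{(\alpha-1)/\alpha}e^{-tA}(x,y)$; combined with $R^{-\alpha}t^{-1}$ this gives $R^{-\alpha}t^{-1/\alpha}e^{-tA}(x,y)$, which I then reconcile with the target bound $Ce^{-tA}(x,y)\varphi_t(x)$ using $\varphi_t(x)\ge\tfrac12$ and absorbing the remaining $t^{-1/\alpha}$ via the localization $|x|\le|y|$ together with the weight behaviour $\varphi_t(x)\approx(t^{-1/\alpha}|x|\wedge1)^{-d+\beta}$ near the origin. (The needed point is that either $x$ is also away from the origin, in which case $e^{-tA}(x,y)$ decay suffices, or $|x|\lesssim t^{1/\alpha}$, in which case $\varphi_t(x)$ is large.)

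I expect the main obstacle to be the bookkeeping in the region $\tau\sim t$, $|x|$ small: there the factor $R^{-\alpha}t^{-1}$ from the potential must be balanced not against $e^{-tA}(x,y)$ alone but against the weighted quantity $e^{-tA}(x,y)\varphi_t(x)$, so one has to track the singular weight $\varphi_\tau(x)$ coming from Corollary \ref{cor2} carefully and use $\varphi_\tau(x)\lesssim (t/\tau)^{(d-\beta)/\alpha}\varphi_t(x)$ for $\tau\le t$ to control the $\tau$-integral near $\tau=0$; this is where $\beta<d$ enters. The choice of $R$ large enough then makes the constant $R^{-\alpha}(\cdots)$ absorbable, completing Case 2 and, together with Case 1 and Claim \ref{claim_n1}, the bound $e^{-t(P^\varepsilon)^*}(x,y)\le Ce^{-tA}(x,y)\varphi_t(x)$ in the regime $0<|x|\le|y|$; the symmetric regime $0<|y|<|x|$, $|y|>2Rt^{1/\alpha}$ is handled analogously (with the roles adjusted), yielding \eqref{apr_ub} and hence Theorem \ref{thm2_}'s upper bound.
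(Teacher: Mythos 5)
Your first step coincides with the paper's: on $B^c(0,Rt^{1/\alpha})$ one has $W_\varepsilon\lesssim R^{-\alpha}t^{-1}$ and $|b_\varepsilon|\lesssim R^{-\alpha+1}t^{-\frac{\alpha-1}{\alpha}}$, which together with Lemma \ref{claim_lem}(\textit{i}) gives exactly \eqref{Best} and the two terms in \eqref{n2}. The gap is in what you do next. You propose to bound $\int_0^t \langle e^{-\tau(P^\varepsilon)^*}(x,\cdot)e^{-(t-\tau)A}(\cdot,y)\rangle d\tau$ and $\int_0^t \langle e^{-\tau(P^\varepsilon)^*}(x,\cdot)E^{t-\tau}(\cdot,y)\rangle d\tau$ by combining the weighted Nash bound (Corollary \ref{cor2}) on $\tau\le t/2$ with Lemma \ref{claim_lem}(\textit{ii}),(\textit{iii}) on the rest. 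This fails for two reasons. First, Lemma \ref{claim_lem}(\textit{ii}),(\textit{iii}) concern convolutions of the \emph{free} kernels $e^{-\tau A}$ and $E^\tau$ only; they say nothing about $e^{-\tau(P^\varepsilon)^*}$, and at this stage of the argument the only pointwise information on $e^{-\tau(P^\varepsilon)^*}(x,z)$ is the Nash bound $C\tau^{-d/\alpha}\varphi_\tau(x)$, which has \emph{no decay in $|x-z|$}. In the regime of Claim \ref{claim_n2} the target $e^{-tA}(x,y)\varphi_t(x)\approx t|x-y|^{-d-\alpha}\varphi_t(x)$ can be far smaller than $t^{-d/\alpha}\varphi_t(x)$ (take $|x|$ small, $|y|$ huge), so no amount of bookkeeping with the Nash bound and the mass of $e^{-(t-\tau)A}(\cdot,y)$ can produce the required off-diagonal decay; moreover the small-time piece $\int_0^{t/2}\tau^{-d/\alpha}\varphi_\tau(x)\,d\tau$ already diverges since $d>\alpha$. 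The symptom appears at the end of your plan: you are left with an extra factor $t^{-1/\alpha}$ that you hope to ``absorb'' via $\varphi_t(x)\ge\frac12$, which is impossible uniformly in $t$ and $x$; in the correct estimate the $t$-powers must cancel exactly (the prefactors $R^{-\alpha}t^{-1}$ and $R^{-\alpha+1}t^{-\frac{\alpha-1}{\alpha}}$ are matched by inner integrals of size $t\,e^{-tA}\varphi_t$ and $t^{\frac{\alpha-1}{\alpha}}e^{-tA}\varphi_t$ respectively).

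The missing idea is the paper's second application of the Duhamel formula \emph{inside} each of the two integrals in \eqref{n2}: write $\int_0^t e^{-\tau(P^\varepsilon)^*}e^{-(t-\tau)A}d\tau = t\,e^{-tA}+I_R+I_R^c$ (and similarly with $E^{t-\tau}$, giving $J_R+J_R^c$), splitting the perturbation once more into $B^t_{\varepsilon,R}+B^{t,c}_{\varepsilon,R}$. The near parts $I_R,J_R$ are controlled by changing the order of integration, applying Lemma \ref{claim_lem}(\textit{ii}),(\textit{iii}) to the free factors, and then invoking the estimates (\textit{j}), (\textit{jj}) from Claim \ref{claim_n1} — this is where the weight $\varphi_t(x)$ and the off-diagonal factor $e^{-tA}(x,y)$ enter, ultimately through the Duhamel identity and Corollary \ref{cor_rem4} rather than through a pointwise use of the Nash bound in a time integral. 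The far parts $I_R^c,J_R^c$ are bounded by the \emph{same} unknown integrals multiplied by $C_0(R^{-\alpha}+k_2R^{-\alpha+1})$, and choosing $R$ large makes this $\le\frac12$, so they are absorbed into the left-hand side. Without this self-improvement/absorption structure (or some substitute yielding off-diagonal decay of the perturbed kernel), your plan cannot close, so as written the proof has a genuine gap.
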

\begin{proof}
Lemma \ref{claim_lem}(\textit{i}) yields
\begin{equation}
\label{Best}
\tag{$\ast$}
|B_{\varepsilon,R}^{t,c}(\cdot) e^{-(t-\tau)A}(\cdot,y)| \leq C_0\big(R^{-\alpha}t^{-1} e^{-(t-\tau)A}(\cdot,y) + R^{-\alpha+1}t^{-\frac{\alpha-1}{\alpha}} E^{t-\tau}(\cdot,y)\big),
\end{equation}
so
\begin{align}
K_R^{t,c}(x,y) & \leq C_0 R^{-\alpha}t^{-1} \int_0^t \big\langle e^{-\tau(P^\varepsilon)^*}(x,\cdot) e^{-(t-\tau)A}(\cdot,y)\big\rangle d\tau \notag \\
&+ C_0 R^{-\alpha+1}t^{-\frac{\alpha-1}{\alpha}}\int_0^t \big\langle e^{-\tau(P^\varepsilon)^*}(x,\cdot) E^{t-\tau}(\cdot,y)\big\rangle d\tau. \label{n2} \tag{$\ast\ast$}
  \end{align}

$\mathbf 1$.~Let us estimate the first term in the RHS of \eqref{n2}. By the Duhamel formula,
\begin{align*}
&\int_0^t e^{-\tau(P^\varepsilon)^*}e^{-(t-\tau)A}d\tau \\
& = \int_0^t e^{-\tau A}e^{-(t-\tau)A}d\tau + \int_0^t \int_0^\tau e^{-\tau'(P^\varepsilon)^*}(B^t_{\varepsilon,R} + B^{t,c}_{\varepsilon,R})e^{-(\tau-\tau')A}d\tau' e^{-(t-\tau)A} d\tau \\
&\equiv t e^{-tA} + I_R + I_R^{c}.
\end{align*}

Applying Lemma \ref{claim_lem}(\textit{i}), we obtain
\begin{align*}
I_R(x,y) & \leq k_1 \int_0^t \int_0^\tau \big(e^{-\tau'(P^\varepsilon)^*}\mathbf{1}_{B(0,Rt^{\frac{1}{\alpha}})}|b_\varepsilon|E^{\tau-\tau'}d\tau' e^{-(t-\tau)A}\big)(x,y) d\tau \\
&+ \int_0^t \int_0^\tau \big(e^{-\tau'(P^\varepsilon)^*}\mathbf{1}_{B(0,Rt^{\frac{1}{\alpha}})}W_\varepsilon e^{-(\tau-\tau')A}d\tau' e^{-(t-\tau)A}\big)(x,y) d\tau \\
& (\text{we are changing the order of integration and applying Lemma \ref{claim_lem}}(\textit{ii})) \\
& \leq k_1 k_2\int_0^t \big( e^{-\tau'(P^\varepsilon)^*}\mathbf{1}_{B(0,Rt^{\frac{1}{\alpha}})}|b_\varepsilon|(t-\tau')^{\frac{\alpha-1}{\alpha}}e^{-(t-\tau')A}\big)(x,y)d\tau' \\
&+ \int_0^t \big(e^{-\tau'(P^\varepsilon)^*}\mathbf{1}_{B(0,Rt^{\frac{1}{\alpha}})}W_\varepsilon (t-\tau') e^{-(t-\tau')A}\big)(x,y)d\tau' \\
& \leq k_1 k_2t^{\frac{\alpha-1}{\alpha}}\int_0^t \big( e^{-\tau'(P^\varepsilon)^*}\mathbf{1}_{B(0,Rt^{\frac{1}{\alpha}})}|b_\varepsilon|e^{-(t-\tau')A}\big)(x,y)d\tau' \\
& + t\int_0^t \big(e^{-\tau'(P^\varepsilon)^*}\mathbf{1}_{B(0,Rt^{\frac{1}{\alpha}})}W_\varepsilon e^{-(t-\tau')A}\big)(x,y)d\tau' \\
& (\text{we are using obvious estimate $t^{-\frac{1}{\alpha}}\mathbf{1}_{B(0,Rt^{\frac{1}{\alpha}})}|b_\varepsilon| \leq RC\mathbf{1}_{B(0,Rt^{\frac{1}{\alpha}})}W_\varepsilon$} \\
& \text{and then applying to both terms estimate (\textit{j}) in the proof of Claim \ref{claim_n1}}) \\
& \leq \hat{C}t e^{-tA}(x,y)\varphi_t(x).
\end{align*}

In turn, $I_R^{c}=\int_0^t (I_R^c)^\tau e^{-(t-\tau)A}d\tau$, where
$
(I_R^c)^\tau :=  \int_0^\tau e^{-\tau'(P^\varepsilon)^*} B_{\varepsilon,R}^{t,c} e^{-(\tau-\tau')A}d\tau',
$
so
\begin{align*}
|(I_R^c)^\tau(x,y)| &\leq C_0 R^{-\alpha}t^{-1}\int_0^\tau \big\langle e^{-\tau'(P^\varepsilon)^*}(x,\cdot) e^{-(\tau-\tau')A}(\cdot,y)\big\rangle d\tau'\\
& + C_0 R^{-\alpha+1}t^{-\frac{\alpha-1}{\alpha}}\int_0^\tau \big\langle e^{-\tau'(P^\varepsilon)^*}(x,\cdot) E^{\tau-\tau'}(\cdot,y)\big\rangle d\tau'.
  \end{align*}
Then
\begin{align*}
 |I_R^c(x,y)| &\leq C_0 R^{-\alpha}t^{-1}
\int_0^t \int_{0}^\tau \big( e^{-\tau'(P^\varepsilon)^*} e^{-(\tau-\tau')A} e^{-(t-\tau)A}\big)(x,y) d\tau' d\tau
 \\
&+ C_0 R^{-\alpha+1}t^{-\frac{\alpha-1}{\alpha}} \int_0^t \int_{0}^\tau \big( e^{-\tau'(P^\varepsilon)^*} E^{\tau-\tau'} e^{-(t-\tau)A}\big)(x,y) d\tau' d\tau,
\end{align*}
where we estimate the first and second integrals as follows.
\begin{align*}
&\int_0^t \int_{0}^\tau \big( e^{-\tau'(P^\varepsilon)^*} e^{-(t-\tau')A}\big)(x,y) d\tau' d\tau \\
&\leq \int_0^t \int_{0}^t \big( e^{-\tau'(P^\varepsilon)^*} e^{-(t-\tau')A}\big)(x,y) d\tau' d\tau = t \int_0^t \big\langle e^{-\tau'(P^\varepsilon)^*}(x,\cdot)e^{-(t-\tau')A}(\cdot,y)\big\rangle d\tau',
\end{align*}
\begin{align*}
&\int_0^t \int_{0}^\tau \big( e^{-\tau'(P^\varepsilon)^*} E^{\tau-\tau'} e^{-(t-\tau)A}\big)(x,y) d\tau' d\tau \\
& (\text{we are changing the order of integration in $\tau$ and $\tau'$}) \\
&=\int_0^t \int_{\tau'}^t \big( e^{-\tau'(P^\varepsilon)^*} E^{\tau-\tau'} e^{-(t-\tau)A}\big)(x,y) d\tau d\tau' \\
& (\text{by Lemma \ref{claim_lem}(\textit{ii}), $\int_{\tau'}^t (E^{\tau-\tau'} e^{-(t-\tau)A})(\cdot,y) d \tau\leq k_2(t-\tau')^{\frac{\alpha-1}{\alpha}}e^{-(t-\tau')A}(\cdot,y)$}) \\
& \leq k_2t^{\frac{\alpha-1}{\alpha}}\int_0^t \big\langle e^{-\tau'(P^\varepsilon)^*}(x,\cdot)e^{-(t-\tau')A}(\cdot,y)\big\rangle d\tau'.
\end{align*}
Thus,
$$
 |I_R^c(x,y)| \leq C_0 (R^{-\alpha}+k_2 R^{-\alpha+1}) \int_0^t \big\langle e^{-\tau(P^\varepsilon)^*}(x,\cdot)e^{-(t-\tau)A}(\cdot,y)\big\rangle d\tau.
$$

Therefore, for $R>1$ such that $C_0 (R^{-\alpha}+k_2 R^{-\alpha+1})\leq \frac{1}{2}$,
\begin{align*}
&\int_0^t \langle e^{-\tau(P^\varepsilon)^*}(x,\cdot)e^{-(t-\tau)A}(\cdot,y)\rangle d\tau \notag \\
&\leq t e^{-tA}(x,y) + \frac{1}{2}\int_0^t \langle e^{-\tau(P^\varepsilon)^*}(x,\cdot)e^{-(t-\tau)A}(\cdot,y)\rangle d\tau
+ \hat{C} t e^{-tA}(x,y)\varphi_t(x), 
\end{align*}
i.e.
$
\int_0^t \langle e^{-\tau(P^\varepsilon)^*}(x,\cdot)e^{-(t-\tau)A}(\cdot,y)\rangle d\tau \leq 2(2+\hat{C}) t e^{-tA}(x,y) \varphi_t(x)$.	

\smallskip

$\mathbf 2$.~Let us estimate the second term in the RHS of \eqref{n2}. By the Duhamel formula
\begin{align*}
&\int_0^t e^{-\tau(P^\varepsilon)^*}E^{t-\tau}d\tau \\
& = \int_0^t e^{-\tau A}E^{t-\tau}d\tau + \int_0^t \int_0^\tau e^{-\tau'(P^\varepsilon)^*}(B^t_{\varepsilon,R} + B^{t,c}_{\varepsilon,R})e^{-(\tau-\tau')A}d\tau' E^{t-\tau} d\tau  \\
&  \equiv \int_0^t e^{-\tau A}E^{1-\tau}d\tau + J_R + J_R^c,
\end{align*}
where, by Lemma \ref{claim_lem}(\textit{ii}),
$
\int_0^t \langle e^{-\tau A}(x,\cdot)E^{t-\tau}(\cdot,y)\rangle d\tau \leq k_2 t^{\frac{\alpha-1}{\alpha}}e^{-tA}(x,y).
$
Let us estimate $J_R(x,y)$ and $J_R^c(x,y)$.

Using Lemma \ref{claim_lem}(\textit{i}), we obtain
\begin{align*}
J_R(x,y) & \leq k_1 \int_0^t \big(\int_0^\tau e^{-\tau'(P^\varepsilon)^*}\mathbf{1}_{B(0,Rt^{\frac{1}{\alpha}})}|b_\varepsilon|E^{\tau-\tau'}d\tau' E^{t-\tau} \big)(x,y)d\tau \\
&+ \int_0^t \int_0^\tau \big(e^{-\tau'(P^\varepsilon)^*}\mathbf{1}_{B(0,Rt^{\frac{1}{\alpha}})}W_\varepsilon e^{-(\tau-\tau')A}d\tau' E^{t-\tau}\big)(x,y) d\tau \\
& (\text{we are changing the order of integration and applying Lemma \ref{claim_lem}}(\textit{ii}),(\textit{iii})) \\
& \leq k_1 k_3\int_0^t \big( e^{-\tau'(P^\varepsilon)^*}\mathbf{1}_{B(0,Rt^{\frac{1}{\alpha}})}|b_\varepsilon|(t-\tau')^{\frac{\alpha-1}{\alpha}}E^{t-\tau'}\big)(x,y)d\tau' \\
&+ k_2\int_0^t \big(e^{-\tau'(P^\varepsilon)^*}\mathbf{1}_{B(0,R)}W_\varepsilon (t-\tau')^{\frac{\alpha-1}{\alpha}} e^{-(t-\tau')A}\big)(x,y)d\tau' \\
& \leq k_1 k_3 t^{\frac{\alpha-1}{\alpha}} \int_0^t \big( e^{-\tau'(P^\varepsilon)^*}\mathbf{1}_{B(0,Rt^{\frac{1}{\alpha}})}|b_\varepsilon|E^{t-\tau'}\big)(x,y)d\tau' \\
& + k_2t^{\frac{\alpha-1}{\alpha}}\int_0^t \big(e^{-\tau'(P^\varepsilon)^*}\mathbf{1}_{B(0,Rt^{\frac{1}{\alpha}})}W_\varepsilon  e^{-(t-\tau')A}\big)(x,y)d\tau' \\
& (\text{we are applying estimates (\textit{j}), (\textit{jj}) in the proof of Lemma \ref{claim_n1}}) \\
& \leq C_1 t^{\frac{\alpha-1}{\alpha}} e^{-tA}(x,y)\varphi_t(x).
\end{align*}

In turn, $J_R^c=\int_0^t (J_R^c)^\tau E^{t-\tau}d\tau$, where
$
(J_R^c)^\tau :=  \int_0^\tau e^{-\tau'(P^\varepsilon)^*} B_{\varepsilon,R}^{t,c} e^{-(\tau-\tau')A}d\tau'.
$
By \eqref{Best} and Lemma \ref{claim_lem}(\textit{ii}), 
\begin{align*}
|(J_R^c)^\tau(x,y)| &\leq C_0 R^{-\alpha}t^{-1}\int_0^\tau \big(e^{-\tau'(P^\varepsilon)^*} e^{-(\tau-\tau')A}\big)(x,y)d\tau' \\
&+ C_0t^{-\frac{\alpha-1}{\alpha}} R^{-\alpha+1}\int_0^\tau \big(e^{-\tau'(P^\varepsilon)^*} E^{\tau-\tau'}\big)(x,y)d\tau'.
\end{align*}
Thus, changing the order of integration and applying Lemma \ref{claim_lem}(\textit{ii}),(\textit{iii}) as above, we obtain
\begin{align*}
 |J_R^c(x,y)| & \leq C_0 k_2 R^{-\alpha}t^{-\frac{1}{\alpha}} \int_0^t \langle e^{-\tau'(P^\varepsilon)^*}(x,\cdot)e^{-(t-\tau')A}(\cdot,y)\rangle d\tau' \\
&+ C_0 k_3 R^{-\alpha + 1} \int_0^t \langle e^{-\tau'(P^\varepsilon)^*}(x,\cdot)E^{t-\tau'}(\cdot,y)\rangle d\tau'.
\end{align*}

Thus, for $R>1$ such that $C_0k_2R^{-\alpha}, C_0 k_3 R^{-\alpha+1} \leq \frac{1}{2}$,
\begin{align*}
&\int_0^t \langle e^{-\tau(P^\varepsilon)^*}(x,\cdot)E^{t-\tau}(\cdot,y)\rangle d\tau \notag \leq  k_2 t^{\frac{\alpha-1}{\alpha}}e^{-tA}(x,y) + \frac{1}{2}t^{-\frac{1}{\alpha}}\int_0^t \langle e^{-\tau(P^\varepsilon)^*}(x,\cdot) e^{-(t-\tau)A}(\cdot,y)\rangle d\tau \\
&+ \frac{1}{2}\int_0^t \langle e^{-\tau(P^\varepsilon)^*}(x,\cdot)E^{t-\tau}(\cdot,y)\rangle d\tau + C_1t^{\frac{\alpha-1}{\alpha}} e^{-tA}(x,y)\varphi_t(x). 
\end{align*}
Applying $\mathbf 1$, we obtain $\int_0^t \langle e^{-\tau(P^\varepsilon)^*}(x,\cdot)E^{t-\tau}(\cdot,y)\rangle d\tau \notag \leq 2(2k_2+2+\hat{C}+C_1)t^{\frac{\alpha-1}{\alpha}}  e^{-tA}(x,y)\varphi_t(x)$.

Now $\mathbf 1$ and $\mathbf 2$ applied in \eqref{n2} yield Claim \ref{claim_n2}.	
\end{proof}

\bigskip

\textit{\textbf{Case 3:} $|x| > 2Rt^{\frac{1}{\alpha}}$, $|y| \leq |x|$. }
Using the Duhamel formula (where we discard the term containing $-U_\varepsilon$) and applying Lemma \ref{claim_lem}(\textit{i}), we have

\begin{align}
&e^{-t(P^\varepsilon)^*}(x,y)  \leq e^{-tA}(x,y) + k_1\int_0^t \big(E^\tau |b_\varepsilon| e^{-(t-\tau)(P^\varepsilon)^*}\big)(x,y)d\tau \notag \\
& \leq e^{-tA}(x,y) + k_1\int_0^t \big(E^\tau \mathbf{1}_{B(0,Rt^{\frac{1}{\alpha}})}|b_\varepsilon| e^{-(t-\tau)(P^\varepsilon)^*} + E^\tau \mathbf{1}_{B^c(0,Rt^{\frac{1}{\alpha}})}|b_\varepsilon| e^{-(1-\tau)(P^\varepsilon)^*}\big)(x,y)d\tau \notag \\
& =: e^{-tA}(x,y)+k_1 L_{R}(x,y)+ k_1 L^{c}_{R}(x,y). \label{e_est}
\end{align}

We first estimate $L_{R}(x,y)$. 
Recalling that $E^\tau(x,z) =\tau\bigl( |x-z|^{-d-\alpha-1} \wedge \tau^{-\frac{d+\alpha+1}{\alpha}}\bigr)$ and taking into account that $|x| \geq 2Rt^{\frac{1}{\alpha}}$, $|z| \leq Rt^{\frac{1}{\alpha}}$, we obtain, for all $0 \leq \tau \leq t$,
$
E^\tau(x,z) \leq t|x-z|^{-d-\alpha-1} \leq t^{\frac{\alpha-1}{\alpha}}|x-z|^{-d-\alpha} R^{-1}.
$
Therefore,
\begin{align*}
L_{R}(x,y) & \leq R^{-1}t^{\frac{\alpha-1}{\alpha}}  \int_0^t \langle |x-\cdot|^{-\alpha-d} \mathbf{1}_{B(0,Rt^{\frac{1}{\alpha}})}(\cdot)|b_\varepsilon(\cdot)| e^{-(t-\tau)(P^\varepsilon)^*}(\cdot,y)\rangle d\tau  \\
& (\text{we are using that $|x| >2Rt^{\frac{1}{\alpha}}$,  $|\cdot| \leq Rt^{\frac{1}{\alpha}}$}) \\
& \leq C_Rt^{\frac{\alpha-1}{\alpha}}|x|^{-\alpha-d} \int_0^t \langle \mathbf{1}_{B(0,Rt^{\frac{1}{\alpha}})}(\cdot)|b_\varepsilon(\cdot)| e^{-(t-\tau)(P^\varepsilon)^*}(\cdot,y)\rangle d\tau\\
& (\text{we are using that $|y| \leq |x|$ and re-denoting $t-\tau$ by $\tau$}) \\
& \leq C'_R t^{-\frac{1}{\alpha}}e^{-tA}(x,y) \int_0^t \|e^{-\tau P^\varepsilon}\mathbf{1}_{B(0,Rt^{\frac{1}{\alpha}})}|b|\|_{\infty} d\tau  \\
& \leq C'_R  t^{-\frac{1}{\alpha}} e^{-tA}(x,y) \int_0^t\bigl(  \|e^{-\tau  P^\varepsilon}\mathbf{1}_{B(0,Rt^{\frac{1}{\alpha}})}\mathbf{1}_{B^c(0,\tau^{\frac{1}{\alpha}})}|b|\|_{\infty}  + \|e^{-\tau P^\varepsilon}\mathbf{1}_{B(0,\tau^{\frac{1}{\alpha}})}|b|\|_{\infty}  \bigr) d\tau  \\
& (\text{in the first term, we are using that $e^{-\tau P^\varepsilon}$ is $L^\infty$ contraction and  $\mathbf{1}_{B^c(0,\tau^{\frac{1}{\alpha}})}|b| \leq \kappa \tau^{-\frac{\alpha-1}{\alpha}}$,} \\
& \text{in the second term we are applying Corollary \ref{cor2})} \\
& \leq C'_R t^{-\frac{1}{\alpha}} e^{-tA}(x,y)\biggl( \kappa \int_0^t \tau^{-\frac{\alpha-1}{\alpha}}d\tau  +  C  \int_0^t \tau^{-\frac{d}{\alpha}} \|\mathbf{1}_{B(0,\tau^{\frac{1}{\alpha}})}|b|\|_{1,\sqrt{\varphi_{\tau}}} d\tau\biggr)
\end{align*}
(where, recall, $\|h\|_{1,\sqrt{\varphi_\tau}}:=\langle |h|\varphi_\tau\rangle$).
In turn, by the definition of $\varphi_\tau$,
\begin{align*}
\|\mathbf{1}_{B(0,\tau^{\frac{1}{\alpha}})}|b|\|_{1,\sqrt{\varphi_{\tau }}} & = \langle \mathbf{1}_{B(0,\tau^{\frac{1}{\alpha}})}(\cdot)\big(\tau^{-\frac{1}{\alpha}}|\cdot|\big)^{-d+\beta}|\cdot|^{-\alpha+1 }\rangle \\
& \leq  \tau^{\frac{d-\beta}{\alpha}} \langle \mathbf{1}_{B(0,\tau^{\frac{1}{\alpha}})}(\cdot)|\cdot|^{-d+\beta-\alpha+1 }\rangle \\
 & = \tau^{\frac{d-\beta}{\alpha}} \tau^{\frac{d}{\alpha}}\tau^{-\frac{d-\beta+\alpha-1}{\alpha}} = \tau^{\frac{d-\alpha+1}{\alpha}}.
\end{align*}
It follows that
\begin{equation}
\label{l_est}
L_{R}(x,y) \leq C' e^{-tA}(x,y).
\end{equation}

Next,
$$
L^{c}_{R}(x,y) \leq \kappa R^{1-\alpha}t^{-\frac{\alpha-1}{\alpha}}\int_0^t E^\tau  e^{-(t-\tau)(P^\varepsilon)^*}d\tau.
$$
Let us estimate the integral in the RHS. Using the Duhamel formula  (where we discard the term containing $-U_\varepsilon $), we obtain
\begin{align*}
\int_0^t\big( E^\tau  e^{-(t-\tau)(P^\varepsilon)^*}\big)(x,y)d\tau & \leq  \int_0^t \big(E^\tau  e^{-(t-\tau)A}\big)(x,y)d\tau + \int_0^t \big(E^\tau \int_0^{t-\tau} E^{t-\tau-s}|b_\varepsilon|e^{-s(P^\varepsilon)^*}ds\big)(x,y) d\tau \\
& (\text{we are applying Lemma \ref{claim_lem}(\textit{ii}) and changing the order of integration}) \\
& \leq k_2 t^{\frac{\alpha-1}{\alpha}}e^{-tA}(x,y) + \int_0^t \int_0^{t-s} \big(E^\tau E^{t-s-\tau}|b_\varepsilon|e^{-s(P^\varepsilon)^*}\big)(x,y)d\tau ds \\
& (\text{we are applying Lemma \ref{claim_lem}(\textit{iii})}) \\
& \leq k_2 t^{\frac{\alpha-1}{\alpha}} e^{-tA}(x,y) + k_3\int_0^t (t-s)^\frac{\alpha-1}{\alpha} \big(E^{t-s}|b_\varepsilon|e^{-s(P^\varepsilon)^*}\big)(x,y) ds \\
& \leq k_2  t^{\frac{\alpha-1}{\alpha}} e^{-tA}(x,y) + k_3 t^{\frac{\alpha-1}{\alpha}} \int_0^t \big( E^{t-s}\mathbf{1}_{B(0,Rt^{\frac{1}{\alpha}})}|b_\varepsilon|e^{-s(P^\varepsilon)^*}\big)(x,y)d\tau ds \\
& + k_3 t^{\frac{\alpha-1}{\alpha}} \int_0^t \big( E^{t-s}\mathbf{1}_{B^c(0,Rt^{\frac{1}{\alpha}})}|b|e^{-s(P^\varepsilon)^*}\big)(x,y) ds \\
& \leq k_2 t^{\frac{\alpha-1}{\alpha}} e^{-tA}(x,y) + \\
& k_3 t^{\frac{\alpha-1}{\alpha}} L_{R}(x,y) + k_3\kappa R^{1-\alpha}\int_0^t \big( E^{t-s}e^{-s(P^\varepsilon)^*}\big)(x,y) ds \\
& (\text{we are applying \eqref{l_est} to the second term} \\
& \text{and enlarge $R$, if needed, to have $k_3 \kappa R^{1-\alpha}\leq \frac{1}{2}$}) \\
& \leq (k_2 + k_3C') t^{\frac{\alpha-1}{\alpha}}  e^{-tA}(x,y) + \frac{1}{2} \int_0^t \big( E^{t-s}e^{-s(P^\varepsilon)^*}\big)(x,y) ds.
\end{align*}
Therefore,
$$
\int_0^t\big( E^\tau  e^{-(t-\tau)(P^\varepsilon)^*}\big)(x,y)d\tau \leq 2(k_2+k_3C')t^{\frac{\alpha-1}{\alpha}}  e^{-tA}(x,y),
$$
and so 
\begin{equation}
\label{l_est2}
L^{c}_{R}(x,y) \leq 2\kappa (k_2+k_3C')R^{1-\alpha}e^{-tA}(x,y).
\end{equation}

We now apply \eqref{l_est} and \eqref{l_est2} in \eqref{e_est} to obtain for all $|x| >2Rt^{\frac{1}{\alpha}}$, $|y| \leq |x|$ the desired bound
$$
e^{-t(P^\varepsilon)^*}(x,y) \leq C e^{-tA}(x,y) \quad \bigg(\leq 2Ce^{-tA}(x,y)\varphi_t(x)\bigg).
$$

\medskip

 The proof of the upper bound is completed.


\bigskip

\section{Proof of Theorem \ref{thm2_}: The lower bound $e^{-t\Lambda}(x,y)\geq C e^{-t(-\Delta)^\frac{\alpha}{2}}(x,y)\varphi_t(y)$}

\begin{proposition}
\label{claim1_lb}
There exists a constant $\hat{\mu}>0$ such that, for all $0<t\leq s$,
\[
\langle\varphi_s e^{-t\Lambda}\varphi_s^{-1} g\rangle \geq e^{-\frac{\hat{\mu}}{s}t}\langle g\rangle.
\]
for every $g=\varphi_s h$, $0\leq h\in \mathcal S$.
\end{proposition}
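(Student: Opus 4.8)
The plan is to transfer the estimate to the smooth approximations $P^\varepsilon$, run a Gronwall argument there, and pass to the limit $\varepsilon\downarrow0$, as in the proof of ($S_4$). Writing $g=\varphi_s h$ with $0\le h\in\mathcal S$ one has $\varphi_s e^{-t\Lambda}\varphi_s^{-1}g=\varphi_s e^{-t\Lambda}h$, so it suffices to prove $\langle e^{-t\Lambda}h,\varphi_s\rangle\ge e^{-\hat\mu t/s}\langle h,\varphi_s\rangle$ for $0<t\le s$. Since $h\in\mathcal S\subset D((-\Delta)^{\frac{\alpha}{2}}_1)=D(P^\varepsilon)$ and $\varphi_s=\varphi_{(1)}+\varphi_{(u)}\in D((-\Delta)^{\frac{\alpha}{2}}_1)+D((-\Delta)^{\frac{\alpha}{2}}_{C_u})\subset D((P^\varepsilon)^*)$ (as in the proof of Proposition \ref{main_prop4}), the function $\tau\mapsto F_\varepsilon(\tau):=\langle e^{-\tau P^\varepsilon}h,\varphi_s\rangle=\langle h,e^{-\tau(P^\varepsilon)^*}\varphi_s\rangle$ is $C^1$ with $F_\varepsilon'(\tau)=-\langle e^{-\tau P^\varepsilon}h,(P^\varepsilon)^*\varphi_s\rangle$; since $e^{-\tau P^\varepsilon}h\ge0$, everything reduces to a suitable upper bound on $(P^\varepsilon)^*\varphi_s$.

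The key step is a refinement of estimate \eqref{P} from the proof of Proposition \ref{main_prop4}, extracting its two-sided content. Using $(P^\varepsilon)^*=(-\Delta)^{\frac{\alpha}{2}}-b_\varepsilon\cdot\nabla-W_\varepsilon$, the representation $(-\Delta)^{\frac{\alpha}{2}}\varphi_s=V\tilde{\varphi}-I_{2-\alpha}\Delta(\varphi_s-\tilde{\varphi})$ with $V=(\beta-\alpha)\kappa|x|^{-\alpha}=\frac{\gamma(\beta)}{\gamma(\beta-\alpha)}|x|^{-\alpha}$ and $|I_{2-\alpha}\Delta(\varphi_s-\tilde{\varphi})|\le c_0 s^{-1}$, and the elementary identity $-b_\varepsilon\cdot\nabla\tilde{\varphi}-W_\varepsilon\tilde{\varphi}=-(\beta-\alpha)\kappa|x|_\varepsilon^{-\alpha}\tilde{\varphi}$, one gets, since $\varphi_s=\tilde{\varphi}$ on $B(0,s^{\frac{1}{\alpha}})$,
\[
(P^\varepsilon)^*\varphi_s=-I_{2-\alpha}\Delta(\varphi_s-\tilde{\varphi})+R_\varepsilon \ \text{ on } B(0,s^{\tfrac{1}{\alpha}}),\qquad R_\varepsilon:=(\beta-\alpha)\kappa\big(|x|^{-\alpha}-|x|_\varepsilon^{-\alpha}\big)\tilde{\varphi}\ \ge 0,
\]
while on $B^c(0,s^{\frac{1}{\alpha}})$, where $\varphi_s$ is bounded and $C^2$ with $|\nabla\varphi_s|\le Cs^{-\frac{1}{\alpha}}$ and constant for $|x|\ge2s^{\frac{1}{\alpha}}$, a direct inspection (there $-W_\varepsilon\varphi_s\le0$, $V\tilde{\varphi}\le Cs^{-1}$, and the remaining terms are $O(s^{-1})$) gives $(P^\varepsilon)^*\varphi_s\le c_1 s^{-1}$. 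Since $\varphi_s\ge\tfrac12$, extending $R_\varepsilon$ by $0$ off $B(0,s^{\frac{1}{\alpha}})$ and putting $\hat\mu:=2(c_0+c_1)$,
\[
(P^\varepsilon)^*\varphi_s\le \hat\mu s^{-1}\varphi_s+R_\varepsilon,\qquad R_\varepsilon\ge0,
\]
and from $|x|^{-\alpha}-|x|_\varepsilon^{-\alpha}\le\min\big(|x|^{-\alpha},\tfrac{\alpha}{2}\varepsilon|x|^{-\alpha-2}\big)$, splitting the integral at $|x|=\sqrt\varepsilon$, one finds $\|R_\varepsilon\|_1\to0$ as $\varepsilon\downarrow0$ (for each fixed $s$). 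This bump $R_\varepsilon$ --- the non-local trace of precisely the phenomenon that forced the potentials $U_\varepsilon$ into $P^\varepsilon$ --- is what I expect to be the main obstacle; everything hinges on the vanishing of its $L^1$-mass.

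Granting this, $F_\varepsilon'(\tau)\ge-\hat\mu s^{-1}F_\varepsilon(\tau)-\|e^{-\tau P^\varepsilon}h\|_\infty\|R_\varepsilon\|_1\ge-\hat\mu s^{-1}F_\varepsilon(\tau)-\|h\|_\infty\|R_\varepsilon\|_1$, so integrating $(e^{\hat\mu s^{-1}\tau}F_\varepsilon(\tau))'$ over $[0,t]$ gives, for $0<t\le s$,
\[
\langle e^{-tP^\varepsilon}h,\varphi_s\rangle\ \ge\ e^{-\hat\mu t/s}\langle h,\varphi_s\rangle-\tfrac{s}{\hat\mu}\|h\|_\infty\|R_\varepsilon\|_1.
\]
Finally I pass to the limit along the sequence $\varepsilon_i\downarrow0$ of Proposition \ref{prop4}(\textit{i}): then $e^{-tP^{\varepsilon_i}}h\to e^{-t\Lambda}h$ in $L^r$, hence a.e. along a subsequence, while the a priori upper bound \eqref{apr_ub} provides the domination $0\le e^{-tP^{\varepsilon_i}}h(x)\le C\langle e^{-t(-\Delta)^{\frac{\alpha}{2}}}(x,\cdot)\varphi_t(\cdot)h(\cdot)\rangle=:\Phi_t(x)$, where $\langle\Phi_t\varphi_s\rangle<\infty$ because $\varphi_s,\varphi_t$ are locally integrable with an integrable singularity at the origin and bounded away from it, $h$ decays rapidly, and $\langle e^{-t(-\Delta)^{\frac{\alpha}{2}}}(x,\cdot)\rangle=1$. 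Dominated convergence then yields $\langle e^{-tP^{\varepsilon_i}}h,\varphi_s\rangle\to\langle e^{-t\Lambda}h,\varphi_s\rangle$, and since $\|R_{\varepsilon_i}\|_1\to0$ we conclude $\langle e^{-t\Lambda}h,\varphi_s\rangle\ge e^{-\hat\mu t/s}\langle h,\varphi_s\rangle=e^{-\hat\mu t/s}\langle g\rangle$, which is the assertion with $\hat\mu=\hat\mu(d,\alpha,\delta)$ as above.
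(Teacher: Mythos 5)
Your proposal is correct and follows essentially the same route as the paper: the same decomposition of $(P^\varepsilon)^*\varphi_s$ into the nonnegative singular part $(\beta-\alpha)\kappa(|x|^{-\alpha}-|x|_\varepsilon^{-\alpha})\tilde{\varphi}$ on $B(0,s^{1/\alpha})$ plus an $O(s^{-1})$ remainder absorbed via $\varphi_s\geq\tfrac12$, followed by the same passage $\varepsilon_i\downarrow 0$ using Proposition \ref{prop4}(\textit{i}) and the domination coming from \eqref{apr_ub}. The only cosmetic difference is that you run a differential Gronwall inequality for $F_\varepsilon(\tau)=\langle e^{-\tau P^\varepsilon}h,\varphi_s\rangle$ and kill the singular term through $\|R_\varepsilon\|_1\to0$, whereas the paper works with the integrated Duhamel-type identity for $e^{-t(P^\varepsilon-\mu)}$ with $\mu=\hat{\mu}/s$ and applies dominated convergence directly to the resulting integral.
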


\begin{proof}

1.~For brevity, write $\varphi\equiv \varphi_s$. Set $g=\varphi h$ where, recall $\varphi=\varphi_{(1)}+\varphi_{(u)}$, $\varphi_{(1)}\in D((-\Delta)^{\frac{\alpha}{2}}_1)$, $\varphi_{(u)}\in D((-\Delta)^{\frac{\alpha}{2}}_{C_u})$. Then $(P^\varepsilon)^*\varphi=(P^\varepsilon)^*\varphi_{(1)}+(P^\varepsilon)^*\varphi_{(u)} \in L^1 + C_u$ (cf.\,proof of Theorem \ref{thm2} and Remark \ref{rem_hille}) and hence
\begin{align*}
\langle g\rangle-\langle \varphi e^{-t(P^\varepsilon-\mu)}h\rangle & =\langle (1-e^{-t((P^\varepsilon)^*-\mu)})\varphi,h\rangle= \\
& -\mu\int_0^t\langle e^{-\tau((P^\varepsilon)^*-\mu)}\varphi,h\rangle d\tau + \int_0^t\langle (P^\varepsilon)^* e^{-\tau((P^\varepsilon)^*-\mu)}\varphi,h\rangle d\tau
\end{align*}
or
\[
\langle g\rangle-\langle \varphi e^{-t(P^\varepsilon-\mu)}h\rangle= -\mu\int_0^t\langle\varphi,e^{-\tau(P^\varepsilon-\mu)}h\rangle d\tau + \int_0^t\langle (P^\varepsilon)^*\varphi, e^{-\tau(P^\varepsilon-\mu)}h\rangle d\tau,
\]
where $0 \leq e^{-\tau(P^\varepsilon-\mu)}h \in L^1 \cap C_u$ since $0 \leq h \in \mathcal S$ (see beginning of Section \ref{heat_sect}).
Write $$(P^\varepsilon)^*\varphi=(P^\varepsilon)^*\tilde{\varphi} + (P^\varepsilon)^*(\varphi-\tilde{\varphi}) =\mathbf{1}_{B(0,s^\frac{1}{\alpha})}(V-V_{\varepsilon})\varphi + v_\varepsilon,$$
where, recall, $\tilde{\varphi}(x)=(s^{-\frac{1}{\alpha}}|x|)^{-d+\beta}$ and $$V(x) \equiv V(|x|)=(\beta-\alpha)\kappa|x|^{-\alpha} \geq V_{\varepsilon}(x):=V(|x|_\varepsilon).$$
Routine calculation (cf.\,the proof of Proposition \ref{main_prop4}) shows that $\|v_\varepsilon\|_\infty \leq \frac{\mu_1}{s}$ for a $\mu_1\neq\mu_1(\varepsilon)$, so
 \[
\int_0^t\langle v_\varepsilon, e^{-\tau(P^\varepsilon-\mu)}h\rangle d\tau\leq \frac{\mu_1}{s}\int_0^t\langle e^{-\tau(P^\varepsilon-\mu)}h\rangle d\tau\leq \frac{2\mu_1}{s}\int_0^t\langle\varphi, e^{-\tau(P^\varepsilon-\mu)}h\rangle d\tau.
\]
Thus, taking $\mu=\frac{\hat \mu}{s}$ and $\hat{\mu}=2\mu_1$, we have
\begin{equation}
\label{ineq77}
\tag{$\diamond$}
\langle g\rangle-e^{\frac{\hat{\mu}}{s}t}\langle \varphi e^{-tP^\varepsilon}h\rangle\leq e^{\hat{\mu}}\int_0^t\langle \mathbf 1_{B(0,s^\frac{1}{\alpha})}(V-V_\varepsilon)\varphi,e^{-\tau P^\varepsilon}h\rangle d\tau.
\end{equation}

2.~Now, we take $\varepsilon \downarrow 0$ in \eqref{ineq77}.

The  RHS of \eqref{ineq77} tends to $0$ as $\varepsilon \downarrow 0$ due to the Dominated Convergence Theorem. Indeed, $V-V_{\varepsilon} \rightarrow 0$ a.e.\,on $\mathbb R^d$, while $\|e^{-\tau P^\varepsilon}h\|_\infty \leq \|h\|_\infty$ and so
\begin{align*}
\mathbf{1}_{B(0,s^\frac{1}{\alpha})}(V-V_{\varepsilon})\varphi  & \leq 2 \varphi \mathbf{1}_{B(0,s^\frac{1}{\alpha})}V \\
& \leq C  \mathbf{1}_{B(0,s^\frac{1}{\alpha})}(s^{-\frac{1}{\alpha}}|x|)^{-d+\beta}|x|^{-\alpha} \in L^1 \quad \text{ since } d-\beta+\alpha<d. 
\end{align*}
Next, by Proposition \ref{prop4}(\textit{i}), $e^{-tP^{\varepsilon_i}}h \rightarrow e^{-t\Lambda}h$ a.e.\,on $\mathbb R^d$ for some $\varepsilon_i \downarrow 0$. The upper bound $e^{-tP^\varepsilon}(x,y) \leq Ce^{-tA}(x,y)\varphi_t(y)$, see \eqref{apr_ub}, yields $\varphi e^{-tP^\varepsilon}h \leq C\varphi e^{-tA} g \in L^1$, so we can apply the Dominated Convergence Theorem in the LHS of \eqref{ineq77} to obtain that it
converges to $\langle g\rangle-e^{\frac{\hat{\mu}}{s}t}\langle \varphi e^{-t\Lambda}h\rangle$ as $\varepsilon_i \downarrow 0$.

The proof of Proposition \ref{claim1_lb} is completed.
\end{proof}

In what follows, as before, $A \equiv (-\Delta)^\frac{\alpha}{2}$.

\begin{proposition}
\label{lem4_lb}
$\langle h \rangle = \langle e^{-t\Lambda^*}h \rangle$ for every $h \in \mathcal S$, $t>0$.
\end{proposition}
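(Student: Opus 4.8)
The plan is to establish the conservativeness identity $\langle e^{-t\Lambda^*} h\rangle = \langle h\rangle$ by first proving it at the level of the approximants $(P^\varepsilon)^*$ and then passing to the limit $\varepsilon\downarrow 0$. Since $e^{-tP^\varepsilon}$ is a $C_0$ semigroup in $L^1$ with generator $-P^\varepsilon$ and $1 \in D((P^\varepsilon)^*_{C_u})$ with $(P^\varepsilon)^* 1 = -W_\varepsilon$ (recall $(P^\varepsilon)^* = (-\Delta)^{\frac\alpha2} - b_\varepsilon\cdot\nabla - W_\varepsilon$ acting on $C_u$, and $(-\Delta)^{\frac\alpha2} 1 = 0$, $b_\varepsilon\cdot\nabla 1 = 0$), the duality $\langle e^{-tP^\varepsilon}f, g\rangle = \langle f, e^{-t(P^\varepsilon)^*}g\rangle$ from Remark \ref{rem_hille} gives, for $h \in \mathcal S \subset L^1$,
\[
\langle e^{-t(P^\varepsilon)^*}h\rangle - \langle h\rangle = \langle h, e^{-tP^\varepsilon}1 - 1\rangle = -\int_0^t \langle h, e^{-\tau P^\varepsilon} W_\varepsilon\rangle\, d\tau = -\int_0^t \langle e^{-\tau(P^\varepsilon)^*} h, W_\varepsilon\rangle\, d\tau,
\]
using that $e^{-\tau P^\varepsilon}1 - 1 = -\int_0^\tau e^{-\sigma P^\varepsilon}W_\varepsilon\, d\sigma$ weakly in $C_u$ (equivalently, differentiating $e^{-\tau(P^\varepsilon)^*}\upharpoonright C_u$ applied to $1$). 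So the defect is exactly $\int_0^t \langle e^{-\tau(P^\varepsilon)^*}h, W_\varepsilon\rangle d\tau \geq 0$, and I must show it vanishes in the limit. Note $0 \le e^{-\tau(P^\varepsilon)^*}h$ and, by the a priori upper bound \eqref{apr_ub} applied to the adjoint (i.e.\ $e^{-\tau(P^\varepsilon)^*}(x,y) \le Ce^{-\tau A}(x,y)\varphi_\tau(x)$), we control $e^{-\tau(P^\varepsilon)^*}h$ pointwise by $C\varphi_\tau e^{-\tau A}(\varphi_\tau^{-1}\cdot\varphi_\tau h)$; more usefully, Corollary \ref{cor_rem4} gives $\langle e^{-\tau(P^\varepsilon)^*}(x,\cdot)\rangle \le 2c_1\varphi_\tau(x)$, hence $\langle e^{-\tau(P^\varepsilon)^*}h, W_\varepsilon\rangle \le$ (a weighted integral of $W_\varepsilon$ against $h$-transported mass), and $\|W_\varepsilon\|_\infty \le c\varepsilon^{-\alpha/2}$ is not uniform — so the bound must come from the local integrability of $W \approx |x|^{-\alpha}$ near the origin together with the decay of the transported mass, not from an $L^\infty$ bound on $W_\varepsilon$.

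The cleanest route: write $\langle e^{-\tau(P^\varepsilon)^*}h, W_\varepsilon\rangle = \langle h, e^{-\tau P^\varepsilon} W_\varepsilon\rangle$ and use the Duhamel-type identity already exploited in Claim \ref{claim_n1}, namely $1 + \int_0^t \langle e^{-\tau(P^\varepsilon)^*}(x,\cdot)W_\varepsilon(\cdot)\rangle d\tau = \langle e^{-t(P^\varepsilon)^*}(x,\cdot)\rangle \le 2c_1\varphi_t(x)$, which shows $\int_0^t\langle e^{-\tau(P^\varepsilon)^*}(x,\cdot)W_\varepsilon(\cdot)\rangle d\tau \le 2c_1\varphi_t(x)$ with constant independent of $\varepsilon$. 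Therefore $\int_0^t\langle e^{-\tau(P^\varepsilon)^*}h, W_\varepsilon\rangle d\tau = \int_0^t\langle h, e^{-\tau P^\varepsilon} W_\varepsilon\rangle d\tau = \langle h(x)\int_0^t\langle e^{-\tau(P^\varepsilon)^*}(x,\cdot)W_\varepsilon\rangle d\tau\,\rangle \le 2c_1\langle h\varphi_t\rangle < \infty$ uniformly in $\varepsilon$. This gives a uniform integrable majorant, so I can pass $\varepsilon_i\downarrow 0$ by dominated convergence once I identify the pointwise limit. By Proposition \ref{prop4}(\textit{i}), $e^{-tP^{\varepsilon_i}}h \to e^{-t\Lambda}h$ a.e., but here I instead need the limit of $e^{-\tau(P^{\varepsilon_i})^*}h$; by the adjoint construction (Remark \ref{rem_hille}, item 2, consistency of the $(P^\varepsilon)^*$ semigroups and the analogue of Proposition \ref{prop4} for $\Lambda^*$), $e^{-\tau(P^{\varepsilon_i})^*}h \to e^{-\tau\Lambda^*}h$ a.e., and $W_\varepsilon \to W = (d-\alpha)\kappa|x|^{-\alpha}$ a.e. Hence $\langle e^{-\tau(P^{\varepsilon_i})^*}h, W_{\varepsilon_i}\rangle \to \langle e^{-\tau\Lambda^*}h, W\rangle$ for a.e.\ $\tau$, and dominated convergence (in $\tau$ too) yields $\int_0^t\langle e^{-\tau\Lambda^*}h, W\rangle d\tau = \lim_i \int_0^t\langle e^{-\tau(P^{\varepsilon_i})^*}h, W_{\varepsilon_i}\rangle d\tau$. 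Simultaneously the LHS $\langle e^{-t(P^{\varepsilon_i})^*}h\rangle - \langle h\rangle \to \langle e^{-t\Lambda^*}h\rangle - \langle h\rangle$, again by the a priori upper bound for $e^{-t(P^\varepsilon)^*}(x,y)$ giving a uniform $L^1$ majorant. It remains to show the limiting defect $\int_0^t\langle e^{-\tau\Lambda^*}h, W\rangle d\tau$ is in fact $0$.

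This last point is where the desingularization pays off and is the main obstacle. The defect at level $\varepsilon$ equals $\langle h\rangle - \langle e^{-t(P^\varepsilon)^*}h\rangle$ computed on the $L^1$ side; but equally, pairing against the \emph{weight} as in Proposition \ref{claim1_lb}, one has $\langle e^{-t\Lambda^*}h\rangle = \langle \varphi_s\cdot\varphi_s^{-1}e^{-t\Lambda^*}h\rangle$, and the argument there (with $\mu = 0$, i.e.\ tracking $(1 - e^{-t(P^\varepsilon)^*})\varphi$) shows the only obstruction to mass conservation is precisely the term $\int_0^t\langle \mathbf 1_{B(0,s^{1/\alpha})}(V - V_\varepsilon)\varphi, e^{-\tau P^\varepsilon}h\rangle d\tau$, which tends to $0$. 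So I expect the correct structure of the proof is: run the computation of Proposition \ref{claim1_lb} with $\mu = 0$ and with the roles arranged so that, after $\varepsilon\downarrow 0$, the Lyapunov identity $\Lambda^*\tilde\varphi = 0$ (Remark \ref{lyapunov_rem}) makes the principal part of $(P^\varepsilon)^*\varphi$ cancel, leaving only the lower-order $O(s^{-1})$ remainder $v_\varepsilon$ which is itself controlled, \emph{but} — crucially — upon taking $\varepsilon\downarrow 0$ and then also exploiting that the construction is by the same sequence $\varepsilon_i$, one finds the net limiting defect is $\langle h\rangle(1 - 1) = 0$. Concretely: combine the $L^1$-side Duhamel identity above with the fact (from Corollary \ref{cor_rem4} / the identity $\langle e^{-t(P^\varepsilon)^*}(x,\cdot)\rangle = 1 + \int_0^t\langle e^{-\tau(P^\varepsilon)^*}(x,\cdot)W_\varepsilon\rangle d\tau$) that $\langle e^{-t(P^\varepsilon)^*}(x,\cdot)\rangle \ge 1$, giving $\langle e^{-t(P^\varepsilon)^*}h\rangle \ge \langle h\rangle$; while the weighted lower bound (Proposition \ref{claim1_lb} with $s = t$ after the limit, or a direct $\mu = 0$ version) combined with $\varphi_t \ge \tfrac12$ and, on the other side, the upper bound will squeeze $\langle e^{-t\Lambda^*}h\rangle \le \langle h\rangle$ in the limit. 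The hard part is making these two one-sided limits consistent — i.e.\ verifying that the same $\varepsilon_i$-subsequence delivers both $\ge$ and $\le$ — which hinges on the uniform (in $\varepsilon$) integrable majorant $2c_1\langle h\varphi_t\rangle$ established above and on the a.e.\ convergence $e^{-\tau(P^{\varepsilon_i})^*}h \to e^{-\tau\Lambda^*}h$; once both inequalities hold in the limit, equality follows and the proof is complete.
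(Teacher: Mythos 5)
There is a genuine gap, and it occurs at the very first step: you applied the wrong operator to the constant function $1$. Since $\langle e^{-t(P^\varepsilon)^*}h\rangle=\langle h,e^{-tP^\varepsilon}1\rangle$, the relevant derivative is $\tfrac{d}{d\sigma}e^{-\sigma P^\varepsilon}1=-e^{-\sigma P^\varepsilon}P^\varepsilon 1$, and $P^\varepsilon 1=U_\varepsilon$ (the fractional Laplacian and $b_\varepsilon\cdot\nabla$ annihilate constants), so
$$\langle h\rangle-\langle e^{-t(P^\varepsilon)^*}h\rangle=\int_0^t\langle U_\varepsilon\, e^{-\tau(P^\varepsilon)^*}h\rangle\,d\tau,$$
with the \emph{auxiliary} potential $U_\varepsilon=\alpha\kappa\varepsilon|x|_\varepsilon^{-\alpha-2}$, not $W_\varepsilon$. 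Your identity $(P^\varepsilon)^*1=-W_\varepsilon$ is correct but governs $e^{-t(P^\varepsilon)^*}1$, i.e.\ the mass production of the \emph{forward} semigroup $e^{-tP^\varepsilon}$ (approximating $e^{-t\Lambda}$), not of its adjoint; the same confusion appears later when you deduce $\langle e^{-t(P^\varepsilon)^*}h\rangle\geq\langle h\rangle$ from $\langle e^{-t(P^\varepsilon)^*}(x,\cdot)\rangle\geq 1$ — that is an integral over the second variable and yields $\langle e^{-tP^\varepsilon}h\rangle\geq\langle h\rangle$, whereas in fact $\langle e^{-t(P^\varepsilon)^*}h\rangle\leq\langle h\rangle$ for $h\geq0$. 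Because of the mix-up, your ``main obstacle'' — showing $\int_0^t\langle e^{-\tau\Lambda^*}h,W\rangle\,d\tau=0$ with $W=(d-\alpha)\kappa|x|^{-\alpha}$ — is the task of proving a false statement (that integral is strictly positive for $0\leq h\not\equiv0$; it measures exactly the mass gain of $e^{-t\Lambda}$), and the concluding ``squeeze'' paragraph is speculative rather than a proof.

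With the correct defect the argument closes easily, and this is what the paper does: split $\int_0^t\langle U_\varepsilon e^{-\tau(P^\varepsilon)^*}h\rangle d\tau$ into the regions $B^c(0,1)$ and $B(0,1)$; on $B^c(0,1)$ use $\|\mathbf 1_{B^c(0,1)}U_\varepsilon\|_\infty\leq\alpha\kappa\varepsilon\to0$ together with the $L^1$-contractivity of $e^{-\tau(P^\varepsilon)^*}$; on $B(0,1)$ dualize to $\langle e^{-\tau P^\varepsilon}\mathbf 1_{B(0,1)}U_\varepsilon,h\rangle$ and use the a priori bound \eqref{apr_ub} to reduce to $\|\varphi_t\mathbf 1_{B(0,1)}U_\varepsilon\|_1\to0$, which holds by dominated convergence since $U_\varepsilon\leq\alpha\kappa|x|^{-\alpha}$, $U_\varepsilon\to0$ a.e., and $\varphi_t|x|^{-\alpha}\sim|x|^{-d+\beta-\alpha}\in L^1_{\rm loc}$ ($\beta>\alpha$). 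Then $\langle h\rangle=\lim_\varepsilon\langle e^{-t(P^\varepsilon)^*}h\rangle$, and the passage to $\langle e^{-t\Lambda^*}h\rangle$ is exactly as in the correct part of your first paragraph: a.e.\ convergence $e^{-t(P^{\varepsilon_i})^*}h\to e^{-t\Lambda^*}h$ (Proposition \ref{prop5}) plus the majorant $C\varphi_t e^{-tA}|h|\in L^1$ from the adjoint upper bound. No appeal to Proposition \ref{claim1_lb}, to the Lyapunov cancellation, or to any two-sided squeeze is needed.
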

\begin{proof}
We have, for $h \in \mathcal S$,
\begin{align*}
\langle h\rangle-\langle e^{-t(P^\varepsilon)^*}h\rangle &= \int_0^t\langle (P^\varepsilon)^* e^{-\tau(P^\varepsilon)^*}h,1\rangle d\tau  = \int_0^t\langle U_\varepsilon e^{-\tau(P^\varepsilon)^*}h\rangle d\tau \\
& = \int_0^t\langle \mathbf{1}_{B^c(0,1)}U_\varepsilon e^{-\tau(P^\varepsilon)^*}h\rangle d\tau + \int_0^t\langle \mathbf{1}_{B(0,1)}U_\varepsilon e^{-\tau(P^\varepsilon)^*}h\rangle d\tau.
\end{align*}
Since $e^{-t(P^\varepsilon)^*}$ is $L^1$ contraction, we have
$
\langle \mathbf{1}_{B^c(0,1)}U_\varepsilon e^{-\tau(P^\varepsilon)^*}h\rangle \leq \|\mathbf{1}_{B^c(0,1)}U_\varepsilon\|_\infty \|h\|_1 \rightarrow 0$ as $\varepsilon \downarrow 0,
$
and so the first integral converges to $0$.
Let us estimate the second integral:
\begin{align*}
& \int_0^t \langle \mathbf{1}_{B(0,1)}U_\varepsilon e^{-\tau(P^\varepsilon)^*}h\rangle d\tau  = \int_0^t \langle e^{-\tau P^\varepsilon}\mathbf{1}_{B(0,1)}U_\varepsilon,h\rangle d\tau \\
&\text{(we are using the upper bound } e^{-\tau P^\varepsilon}(x,y) \leq C e^{-\tau A}(x,y)\varphi_t(y), \text{ see \eqref{apr_ub}})\\
&\leq C\int_0^t \langle e^{-\tau A} \varphi\mathbf{1}_{B(0,1)}U_\varepsilon,|h|\rangle d\tau \\
&\leq Ct \|h\|_\infty\|\varphi_t\mathbf{1}_{B(0,1)}U_\varepsilon\|_1 \rightarrow 0 \text{ as } \varepsilon \downarrow 0 \quad \text{ due to } d-\beta+\alpha <d
\end{align*}
(recall that $U_\varepsilon(x)=\alpha \kappa \varepsilon|x|_\varepsilon^{-\alpha-2} \leq \alpha \kappa |x|_\varepsilon^{-\alpha}$).

Thus, $\langle h\rangle= \lim_{\varepsilon \downarrow 0}\langle e^{-t(P^\varepsilon)^*}h\rangle$. 

By Proposition \ref{prop5}, we have $e^{-t(P^{\varepsilon_i})^*}h \rightarrow e^{-t\Lambda^*}h$ a.e.\,on $\mathbb R^d$. The upper bound  $e^{-t(P^\varepsilon)^*}(x,y) \leq C e^{-tA}(x,y)\varphi_t(x)$ yields $|e^{-t(P^\varepsilon)^*}h| \leq C  \varphi_t e^{-tA}|h| \in L^1$, and so $\lim_i\langle e^{-t(P^{\varepsilon_i})^*}h \rangle = \langle e^{-t\Lambda^*}h \rangle$ by the Dominated Convergence Theorem. Thus, equality $\langle h \rangle = \langle e^{-t\Lambda^*}h \rangle$ holds for every $h \in \mathcal S$.
\end{proof}

\medskip

\textit{In the rest of the proof, all pointwise inequalities are understood in the sense a.e.}.

\medskip
   
Given constants $0<r<R_0<R$, we denote
$$
r_t:=rt^{\frac{1}{\alpha}}, \quad R_{0,t}:=R_0t^{\frac{1}{\alpha}}, \quad R_t:=Rt^{\frac{1}{\alpha}},
$$
and put $\mathbf 1_{b,a}:=\mathbf 1_{B(0,b)}-\mathbf 1_{B(0,a)}$ for any $0<a<b$.

We need the following consequence of Proposition \ref{claim1_lb} and Proposition \ref{lem4_lb}.

\begin{proposition}
\label{ANcorol2} 
For every $R_0>0$ there exist constants $0<r<R_0<R$ such that

(\textit{i}) \begin{equation*}
e^{-\hat{\mu}-1}\varphi_t(x)\leq e^{-t\Lambda^*}\varphi_t \mathbf 1_{R_t,r_t}(x) \;\; \text{ for all }x \in B(0,R_{0,t}), \quad x \neq 0,
\end{equation*}
where $\hat{\mu}>0$ is from Proposition \ref{claim1_lb};

(\textit{ii})
$$
\frac{1}{2} \leq e^{-t\Lambda}\mathbf{1}_{R_t,r_t}(x) \quad \text{ for all $x \in B(0,R_{0,t})$}.
$$

\end{proposition}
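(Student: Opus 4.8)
The plan is to obtain both inequalities by comparing the truncated functions $\varphi_t\mathbf{1}_{R_t,r_t}$ and $\mathbf{1}_{R_t,r_t}$ with the full functions $\varphi_t$ and $1$, and then absorbing the resulting tails by means of the upper bound $e^{-t\Lambda^*}(x,y)\le Ce^{-tA}(x,y)\varphi_t(x)$ already established in Section \ref{ub_sect} (the adjoint form of the upper bound of Theorem \ref{thm2_}, legitimate since $e^{-t\Lambda^*}(y,x)=e^{-t\Lambda}(x,y)$, cf.\,Remark \ref{rem_hille}). The two ``mass'' inputs are Proposition \ref{claim1_lb} (controlling $\varphi_t$) and Proposition \ref{lem4_lb} (controlling $1$).

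First I would recast these propositions as pointwise bounds. Taking $s=t$ in Proposition \ref{claim1_lb} with $g=\varphi_t h$, $0\le h\in\mathcal S$, its left side equals $\langle\varphi_t\,e^{-t\Lambda}h\rangle$, and since $e^{-t\Lambda},e^{-t\Lambda^*}$ are integral operators with kernels related by $e^{-t\Lambda^*}(y,x)=e^{-t\Lambda}(x,y)$, Tonelli gives $\langle\varphi_t\,e^{-t\Lambda}h\rangle=\langle e^{-t\Lambda^*}\varphi_t,h\rangle$; hence $\langle e^{-t\Lambda^*}\varphi_t,h\rangle\ge e^{-\hat\mu}\langle\varphi_t,h\rangle$ for all such $h$. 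Since $e^{-t\Lambda^*}\varphi_t\le C\varphi_t\,(e^{-tA}\varphi_t)$ with $e^{-tA}\varphi_t$ bounded (split the $y$-integral at $|y|=t^{1/\alpha}$: the near part is finite because $\beta>0$, the far part is $\le\int e^{-tA}(x,y)\,dy=1$ since $\varphi_t\le1$ there), the function $e^{-t\Lambda^*}\varphi_t-e^{-\hat\mu}\varphi_t$ lies in $L^1_{\loc}$ and tests nonnegatively against every $0\le h\in\mathcal S$, so
\[
e^{-t\Lambda^*}\varphi_t\ \ge\ e^{-\hat\mu}\varphi_t\qquad\text{a.e.}
\]
In the same way Proposition \ref{lem4_lb} rewrites, via Tonelli, as $\langle e^{-t\Lambda}1,h\rangle=\langle 1,h\rangle$ for $h\in\mathcal S$, where $e^{-t\Lambda}1(x):=\int_{\mathbb R^d}e^{-t\Lambda}(x,y)\,dy=\lim_n e^{-t\Lambda}\mathbf{1}_{B(0,n)}(x)\le 1$, whence the conservativeness $e^{-t\Lambda}1=1$ a.e.

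Next I would prove two tail estimates, uniformly in $t$. Using the upper bound, $e^{-tA}(x,y)\le k_0 t^{-d/\alpha}$ and $\int_{|y|<r_t}\varphi_t(y)\,dy\le c\,r^{\beta}t^{d/\alpha}$ (immediate from $\varphi_t(y)=(t^{-1/\alpha}|y|)^{-d+\beta}$ there), one gets, for every $x$, the inner-ball bounds $e^{-t\Lambda^*}(\varphi_t\mathbf{1}_{B(0,r_t)})(x)\le c_1 r^{\beta}\varphi_t(x)$ and $e^{-t\Lambda}\mathbf{1}_{B(0,r_t)}(x)\le c_1 r^{\beta}$. For the outer ball I would restrict to $x\in B(0,R_{0,t})$ and impose $R\ge 2\max(R_0,1)$, so that $|x-y|\ge\frac{1}{2}|y|$ and $\varphi_t(y)\le1$ whenever $|y|>R_t$; then $e^{-tA}(x,y)\le k_0 2^{d+\alpha}t|y|^{-d-\alpha}$ and $\int_{|y|>R_t}t|y|^{-d-\alpha}dy\le c\,R^{-\alpha}$ give $e^{-t\Lambda^*}(\varphi_t\mathbf{1}_{B^c(0,R_t)})(x)\le c_2 R^{-\alpha}\varphi_t(x)$ and $e^{-t\Lambda}\mathbf{1}_{B^c(0,R_t)}(x)\le c_2 R^{-\alpha}$, with $c_2=c_2(d,\alpha)$ independent of $R_0$ — which is the point of taking $R\ge 2R_0$.

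Finally, from $\varphi_t=\varphi_t\mathbf{1}_{R_t,r_t}+\varphi_t\mathbf{1}_{B(0,r_t)}+\varphi_t\mathbf{1}_{B^c(0,R_t)}$ together with the linearity and positivity of $e^{-t\Lambda^*}$ and the estimates above,
\[
e^{-t\Lambda^*}(\varphi_t\mathbf{1}_{R_t,r_t})(x)\ \ge\ \bigl(e^{-\hat\mu}-c_1 r^{\beta}-c_2 R^{-\alpha}\bigr)\varphi_t(x),\qquad x\in B(0,R_{0,t}),
\]
so that choosing $r$ small and $R$ large with $c_1 r^{\beta}+c_2 R^{-\alpha}\le e^{-\hat\mu}(1-e^{-1})$ yields (\textit{i}); likewise $\mathbf{1}_{R_t,r_t}=1-\mathbf{1}_{B(0,r_t)}-\mathbf{1}_{B^c(0,R_t)}$ together with $e^{-t\Lambda}1=1$ gives $e^{-t\Lambda}\mathbf{1}_{R_t,r_t}(x)\ge 1-c_1 r^{\beta}-c_2 R^{-\alpha}$ on $B(0,R_{0,t})$, and $c_1 r^{\beta}+c_2 R^{-\alpha}\le\frac{1}{2}$ yields (\textit{ii}); taking $r$ the smaller and $R$ the larger of the two resulting choices (and also $r<R_0$, $R\ge 2\max(R_0,1)$) produces the common constants asserted. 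I expect the step requiring most care to be the first one: justifying the kernel/Tonelli manipulations and the local integrability/finiteness used to pass from the integral identities of Propositions \ref{claim1_lb} and \ref{lem4_lb} to the pointwise statements $e^{-t\Lambda^*}\varphi_t\ge e^{-\hat\mu}\varphi_t$ and $e^{-t\Lambda}1=1$; the tail estimates are then a routine computation with the $\alpha$-stable heat kernel.
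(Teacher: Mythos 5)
Your argument is correct and is essentially the paper's own proof: both rest on Propositions \ref{claim1_lb} and \ref{lem4_lb} together with the upper bound $e^{-t\Lambda^*}(x,y)\le Ce^{-tA}(x,y)\varphi_t(x)$, and use the same splitting of $\varphi_t$ (resp.\ of $1$) into the annulus part plus the $B(0,r_t)$ and $B^c(0,R_t)$ tails, with the same heat-kernel estimates making the tails small as $r\downarrow 0$ and $R\uparrow\infty$. The only cosmetic difference is that you first upgrade the integral statements to the pointwise facts $e^{-t\Lambda^*}\varphi_t\ge e^{-\hat\mu}\varphi_t$ and $e^{-t\Lambda}1=1$ a.e.\ and then subtract the tails pointwise, whereas the paper performs the identical estimates in the dual form, tested against $0\le h\in\mathcal S$ with $\sprt h\subset B(0,R_{0,t})$, and concludes by duality.
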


\begin{proof}(\textit{i}) By duality,
it suffices to prove 
that for all $g:=\varphi_t h$, $0\leq h\in \mathcal S$ with $\sprt h\subset B(0,R_{0,t})$ we have
\[
e^{-\hat{\mu}-1}\langle g\rangle\leq\langle \mathbf 1_{R_t,r_t}\varphi_t e^{-t\Lambda}\varphi_t^{-1}g\rangle, 
\]
By the upper bound $e^{-t\Lambda}(x,y) \leq Ce^{-tA}(x,y)\varphi_t(y)$ proved in the previous section,
\begin{align*}
\langle\mathbf 1_{B(0,r_t)}\varphi_t e^{-t\Lambda}\varphi_t^{-1}g\rangle &\leq C\langle\mathbf 1_{B(0,r_t)}\varphi_t,e^{-tA}g\rangle\\
&\leq CC_1t^{-\frac{d}{\alpha}}\|\mathbf 1_{B(0,r_t)}\varphi_t\|_1\|g\|_1\\
&= CC_1\|\mathbf 1_{B(0,r)}\varphi_1\|_1\|g\|_1, \quad \|\mathbf 1_{B(0,r)}\varphi_1\|_1 \rightarrow 0 \text{ as } r \downarrow 0.
\end{align*}
\begin{align*}
\langle\mathbf 1_{B^c(0,R_t)}\varphi_t e^{-t\Lambda}\varphi_t^{-1}g\rangle &\leq C\langle\mathbf 1_{B^c(0,R_t)}\varphi_t,e^{-tA}g\rangle\\
&\leq C\langle e^{-tA}\mathbf 1_{B^c(0,R_t)},g\mathbf 1_{B(0,R_{0,t})}\rangle \\
&\leq C\sup_{x\in B(0,R_{0,t})}e^{-tA}\mathbf 1_{B^c(0,R_t)}(x)\|g\|_1\\
&\leq C(R_0,R)\|g\|_1, \quad C(R_0,R)\rightarrow 0 \text{ as } R-R_0 \uparrow \infty,
\end{align*}
where at the last step we have used, for $x \in B(0,R_{0,t})$, $y \in B^c(0,R_{t})$ and $\tilde{x}=R_0^{-1}t^{-\frac{1}{\alpha}}x \in B(0,1)$, $\tilde{y}=R^{-1}t^{-\frac{1}{\alpha}}y \in B^c(0,1)$, 
\begin{align*}
e^{-tA}(x,y) & \leq k_0 t |x-y|^{-d-\alpha} \leq k_0 t |R_0 t^{\frac{1}{\alpha}}\tilde{x}-R t^{\frac{1}{\alpha}}\tilde{y}|^{-d-\alpha} < 2k_0 t^{-\frac{d}{\alpha}}(R-R_0)^{-d-\alpha}|\tilde{y}|^{-d-\alpha}.
\end{align*}
It remains to apply Proposition \ref{claim1_lb}.

\medskip

(\textit{ii}) We follow the argument in (\textit{i}). By duality, it suffices to prove 
that for all $0\leq h\in \mathcal S$ with $\sprt h\subset B(0,R_{0,t})$ we have
\[
\frac{1}{2}\langle h \rangle \leq \langle \mathbf{1}_{R_t,r_t} e^{-t\Lambda^*}h\rangle.
\]
By the upper bound $e^{-t\Lambda}(x,y) \leq Ce^{-tA}(x,y)\varphi_t(y)$,
\begin{align*}
\langle\mathbf 1_{B(0,r_t)} e^{-t\Lambda^*} h\rangle &\leq C\langle\mathbf 1_{B(0,r_t)}\varphi_t,e^{-tA}h\rangle\\
&\leq CC_1t^{-\frac{d}{\alpha}}\|\mathbf 1_{B(0,r_t)}\varphi_t\|_1\|h\|_1\\
&= o(r)\|h\|_1, \quad o(r)\rightarrow 0 \text{ as } r\downarrow 0;
\end{align*}
\begin{align*}
\langle\mathbf 1_{B^c(0,R_t)}e^{-t\Lambda^*} h\rangle &\leq C\langle\mathbf 1_{B^c(0,R_t)}\varphi_t,e^{-tA}h\rangle\\
&\leq C\langle e^{-tA}\mathbf 1_{B^c(0,R_t)},h \mathbf 1_{B(0,R_{0,t})}\rangle \\
&\leq C\sup_{x\in B(0,R_{0,t})}e^{-tA}\mathbf 1_{B^c(0,R_t)}(x)\|h\|_1\\
&= C(R_0,R)\|h\|_1, \quad C(R_0,R)\rightarrow 0 \text{ as } R-R_0 \uparrow \infty.
\end{align*}
The last two estimates and Proposition \ref{lem4_lb} yield $\frac{1}{2}\langle h \rangle \leq \langle \mathbf{1}_{R_t,r_t} e^{-t\Lambda^*}h\rangle$.  
\end{proof}

%
%
%

Next, we show that away from the singularity the ``standard lower bound'' on the heat kernel $e^{-t\Lambda^*}(x,y)$ is valid.

\begin{proposition}
\label{claim2_lb}
There exists a constant $r \geq 2$ such that for all $t>0$
we have
\[
e^{-t\Lambda^*}(x,y) \geq \frac{1}{2} e^{-tA}(x,y) \quad \text{ for all }|x| \geq rt^{\frac{1}{\alpha}},\; |y| \geq rt^{\frac{1}{\alpha}}. 
\]
\end{proposition}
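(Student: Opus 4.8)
The plan is to establish the a priori estimate $e^{-t(P^\varepsilon)^*}(x,y)\ge\frac12 e^{-tA}(x,y)$ for all $\varepsilon>0$ and all $x,y$ with $|x|,|y|\ge rt^{1/\alpha}$, with a constant $r$ independent of $\varepsilon$, and then to pass to the limit $\varepsilon\downarrow0$. Recall $(P^\varepsilon)^*=A-b_\varepsilon\cdot\nabla-W_\varepsilon$ with $W_\varepsilon\ge0$, and the Duhamel identity $e^{-t(P^\varepsilon)^*}=e^{-tA}+\int_0^t e^{-\tau(P^\varepsilon)^*}(b_\varepsilon\cdot\nabla+W_\varepsilon)e^{-(t-\tau)A}\,d\tau$ already used in Section \ref{ub_sect}. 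Since $e^{-\tau(P^\varepsilon)^*}$ and $e^{-(t-\tau)A}$ have nonnegative kernels and $W_\varepsilon\ge0$, the $W_\varepsilon$--term contributes nonnegatively; discarding it and invoking Lemma \ref{claim_lem}(\textit{i}) to bound $|\nabla_z e^{-(t-\tau)A}(z,y)|\le k_1 E^{t-\tau}(z,y)$, we obtain
\[
e^{-t(P^\varepsilon)^*}(x,y)\ \ge\ e^{-tA}(x,y)-k_1 I,\qquad I:=\int_0^t\big\langle e^{-\tau(P^\varepsilon)^*}(x,\cdot)\,|b_\varepsilon(\cdot)|\,E^{t-\tau}(\cdot,y)\big\rangle\,d\tau .
\]
It thus suffices to show $k_1 I\le\frac12 e^{-tA}(x,y)$ once $r$ is large enough.

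Split $I=I'+I''$ according to $|z|\le\rho$, resp.\ $|z|>\rho$, where $\rho:=\frac12(|x|\wedge|y|)\ge\frac r2 t^{1/\alpha}$. On $\{|z|>\rho\}$ one has $|b_\varepsilon(z)|\le\kappa|z|^{1-\alpha}\le\kappa(r/2)^{1-\alpha}t^{(1-\alpha)/\alpha}$, so $I''\le\kappa(r/2)^{1-\alpha}t^{(1-\alpha)/\alpha}\int_0^t\langle e^{-\tau(P^\varepsilon)^*}(x,\cdot)E^{t-\tau}(\cdot,y)\rangle\,d\tau$; by the estimate obtained in the proof of Claim \ref{claim_n2} (applicable here since $|x|,|y|\ge 2Rt^{1/\alpha}$ once $r\ge 2R$, after, if $|x|>|y|$, exchanging the roles of $x$ and $y$ via $e^{-t(P^\varepsilon)^*}(x,y)=e^{-tP^\varepsilon}(y,x)$ and the analogue of ``Case 3''; note $\varphi_t(x)=\frac12$ as $|x|\ge 2t^{1/\alpha}$) this integral is $\le C t^{(\alpha-1)/\alpha}e^{-tA}(x,y)$, whence $I''\le C\kappa(r/2)^{1-\alpha}e^{-tA}(x,y)\le\frac1{4k_1}e^{-tA}(x,y)$ for $r$ large, because $1-\alpha<0$.

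For the near--origin term note that $|z|\le\rho$ forces $|x-z|\ge\frac12|x|$ and $|z-y|\ge\frac12|y|$ (both $\ge\frac r2 t^{1/\alpha}$), so $z$ is far from both $x$ and $y$; then $E^{t-\tau}(z,y)=(t-\tau)|z-y|^{-d-\alpha-1}$ and, by \eqref{apr_ub}, $e^{-\tau(P^\varepsilon)^*}(x,z)\le Ce^{-\tau A}(x,z)\varphi_\tau(z)\lesssim\tau|x-z|^{-d-\alpha}\varphi_\tau(z)$. Inserting these, using $|b_\varepsilon(z)|\le\kappa|z|^{1-\alpha}$ together with the (crucial) integrability $\int_{B(0,\rho)}\varphi_\tau(z)|z|^{1-\alpha}\,dz\lesssim r^{d+1-\alpha}t^{(d+1-\alpha)/\alpha}$ --- finite because $\beta>\alpha-1$ --- and carrying out the $\tau$--integration, one gets $I'\lesssim r^{d+1-\alpha}|x|^{-d-\alpha}|y|^{-d-\alpha-1}t^{(d+1-\alpha)/\alpha+3}$. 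With $|x|,|y|\ge rt^{1/\alpha}$ this is $\lesssim r^{-d-3\alpha}t^{-d/\alpha}\approx r^{-d-3\alpha}e^{-tA}(x,y)$ in the range $|x-y|\le t^{1/\alpha}$; in the range $|x-y|>t^{1/\alpha}$, splitting $B(0,\rho)$ further according to which of $|x-z|$, $|z-y|$ is $\ge\frac12|x-y|$ and keeping the corresponding power of $|x-y|$ yields instead $I'\lesssim r^{1-2\alpha}t|x-y|^{-d-\alpha}\approx r^{1-2\alpha}e^{-tA}(x,y)$. Hence in all cases $I'\lesssim r^{-(\alpha-1)}e^{-tA}(x,y)\le\frac1{4k_1}e^{-tA}(x,y)$ for $r$ large, and together with the bound on $I''$ this gives the a priori estimate. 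Finally, for $0\le h_1,h_2\in C_c^\infty$ supported in $\{|\cdot|\ge rt^{1/\alpha}\}$ the a priori estimate gives $\langle h_1,e^{-t(P^\varepsilon)^*}h_2\rangle\ge\frac12\langle h_1,e^{-tA}h_2\rangle$; letting $\varepsilon_i\downarrow0$ and using Proposition \ref{prop5} ($e^{-t(P^{\varepsilon_i})^*}h_2\to e^{-t\Lambda^*}h_2$ a.e.) together with the domination $0\le e^{-t(P^\varepsilon)^*}h_2\le C\varphi_t\,e^{-tA}h_2$ from \eqref{apr_ub} (so that $h_1\varphi_t e^{-tA}h_2\in L^1$, $h_1$ being compactly supported away from the origin), we get $\langle h_1,e^{-t\Lambda^*}h_2\rangle\ge\frac12\langle h_1,e^{-tA}h_2\rangle$; since $e^{-t\Lambda^*}$ is an integral operator (Theorem \ref{thm2} and consistency), the Lebesgue differentiation theorem yields $e^{-t\Lambda^*}(x,y)\ge\frac12 e^{-tA}(x,y)$ a.e.\ on $\{|x|\ge rt^{1/\alpha}\}\times\{|y|\ge rt^{1/\alpha}\}$, with $r$ the (large, in particular $\ge2$) constant fixed above.

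The main obstacle is the estimate of $I'$: one must simultaneously exploit the weighted upper bound \eqref{apr_ub}, the off--diagonal decay $|x-z|^{-d-\alpha}$, $|z-y|^{-d-\alpha-1}$ of the kernels --- available precisely because $x,y$ lie far from the region $|z|\approx0$ carrying the singularity of $b_\varepsilon$ --- and the integrability of $\varphi_\tau(\cdot)|b_\varepsilon(\cdot)|$ near the origin, and then convert the resulting decay in $|x|,|y|$ into the $|x-y|$--decay of $e^{-tA}$, which is what forces the short case split on the size of $|x-y|$ relative to $t^{1/\alpha}$. A secondary technical point is that the auxiliary $E^t$--estimates borrowed from Section \ref{ub_sect} for $I''$ must be available for both orderings of $|x|$ and $|y|$, which is arranged via the adjoint relation $e^{-t(P^\varepsilon)^*}(x,y)=e^{-tP^\varepsilon}(y,x)$.
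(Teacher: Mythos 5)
Your proposal follows the same overall strategy as the paper (Duhamel for $e^{-t(P^\varepsilon)^*}$ with the nonnegative $W_\varepsilon$-term discarded, Lemma \ref{claim_lem}(\textit{i}) for the gradient term, a near/far-from-origin split of the error, smallness for large $r$, then $\varepsilon\downarrow 0$ via Proposition \ref{prop5}), but the execution of the split is genuinely different. The paper first uses the dual of \eqref{apr_ub} to replace $e^{-(t-\tau)(P^\varepsilon)^*}(x,\cdot)$ by $\tfrac{C}{2}e^{-(t-\tau)A}(x,\cdot)$ (since $\varphi_{t-\tau}(x)=\tfrac12$), reducing everything to free-kernel integrals $J$, and then splits at the \emph{small} radius $\gamma r t^{1/\alpha}$, handling the inner part with the $3P$-type inequality and the outer part with Lemma \ref{claim_lem}, the conclusion resting on a coupled choice of $(\gamma,r)$. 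You instead split at the \emph{large} radius $\rho=\tfrac12(|x|\wedge|y|)$, treat the inner part by direct off-diagonal kernel decay ($|x-z|\gtrsim|x|$, $|z-y|\gtrsim|y|$) plus integrability of the drift near the origin and a case split on $|x-y|$ versus $t^{1/\alpha}$ (avoiding the $3P$ inequality and the two-parameter balancing), and treat the outer part through the composite estimate $\int_0^t\langle e^{-\tau(P^\varepsilon)^*}(x,\cdot)E^{t-\tau}(\cdot,y)\rangle d\tau\lesssim t^{\frac{\alpha-1}{\alpha}}e^{-tA}(x,y)$. This is a legitimate alternative and arguably more elementary in the inner region.

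Three points need repair, though none is fatal. First, the dual of \eqref{apr_ub} reads $e^{-\tau(P^\varepsilon)^*}(x,z)\le C e^{-\tau A}(x,z)\varphi_\tau(x)$, with the weight at the \emph{first} variable; your version with $\varphi_\tau(z)$ is only true here because $\varphi_\tau(x)=\tfrac12\le\varphi_\tau(z)$, and should be stated that way (in fact the weight is then unnecessary, since $\int_{B(0,\rho)}|z|^{1-\alpha}dz<\infty$ already). Second, the intermediate bound $\int_{B(0,\rho)}\varphi_\tau(z)|z|^{1-\alpha}dz\lesssim r^{d+1-\alpha}t^{\frac{d+1-\alpha}{\alpha}}$ is false when $\rho\gg rt^{1/\alpha}$; the correct bound is $\lesssim\rho^{d+1-\alpha}$, and your final estimates $I'\lesssim r^{-d-3\alpha}t^{-d/\alpha}$ (for $|x-y|\le t^{1/\alpha}$) and $I'\lesssim r^{1-2\alpha}t|x-y|^{-d-\alpha}$ (for $|x-y|>t^{1/\alpha}$) do survive with it, because the factor $\rho^{d+1-\alpha}\le(\tfrac12(|x|\wedge|y|))^{d+1-\alpha}$ is absorbed by the kernel decay in $|x|\wedge|y|$; this recomputation should be written out. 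Third, your justification of the $I''$ estimate for the ordering $|x|>|y|$ via ``the adjoint relation and the analogue of Case 3'' is not a literal application: the adjoint relation turns $\int e^{-\tau(P^\varepsilon)^*}(x,z)E^{t-\tau}(z,y)dz$ into a composition of $E^{t-\tau}$ with $e^{-\tau P^\varepsilon}$, whereas Case 3 and Claim \ref{claim_n2} concern the adjoint semigroup with a specific ordering of the variables. The clean fix is the one-line argument available to you (and which is exactly the reduction the paper makes at the outset): since $|x|\ge r t^{1/\alpha}$ with $r\ge 2$, $e^{-\tau(P^\varepsilon)^*}(x,\cdot)\le \tfrac{C}{2}e^{-\tau A}(x,\cdot)$, and then Lemma \ref{claim_lem}(\textit{ii}) gives $\int_0^t\langle e^{-\tau(P^\varepsilon)^*}(x,\cdot)E^{t-\tau}(\cdot,y)\rangle d\tau\le \tfrac{C k_2}{2}t^{\frac{\alpha-1}{\alpha}}e^{-tA}(x,y)$ for both orderings, with no appeal to Section \ref{ub_sect}'s Case 2/Case 3 machinery.
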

\begin{proof}
By the Duhamel formula (where we discard positive term containing $W_\varepsilon$ in the RHS)
\[
e^{-t(P^\varepsilon)^*}(x,y) \geq e^{-tA}(x,y)  - |M_t(x,y)|, \qquad M_t(x,y) \equiv \int_0^t \langle e^{-(t-\tau)(P^\varepsilon)^*}(x,\cdot) b_\varepsilon (\cdot)\cdot\nabla_\cdot e^{-\tau A}(\cdot,y)\rangle d\tau.
\]
By Lemma \ref{claim_lem}(\textit{i}),
\begin{align*}
|M_t(x,y)| & \leq k_2 \int_0^t \big\langle  e^{-(t-\tau)(P^\varepsilon)^*}(x,\cdot)|\cdot|^{1-\alpha}E^\tau(\cdot,y)\big\rangle d\tau  \\
& (\text{we apply the upper bound}) \\
& \leq k_2 C \int_0^t \varphi_{t-\tau}(x) \big\langle  e^{-(t-\tau)A}(x,\cdot)|\cdot|^{1-\alpha}E^\tau(\cdot,y)\big\rangle d\tau \\
& (\text{since $|x| \geq rt^{\frac{1}{\alpha}}$, where $r \geq 2$, we have $\varphi_{t-\tau}(x)=\frac{1}{2}$}) \\
& \leq  \frac{k_2 C}{2} \int_0^t \big\langle  e^{-(t-\tau)A}(x,\cdot)|\cdot|^{1-\alpha}E^\tau(\cdot,y)\big\rangle d\tau \\
& =: J(|\cdot|^{1-\alpha}).
\end{align*}

Let $0<\gamma<\frac{1}{2}$, to be chosen later. We have for all $0<\tau<t$, $|x|$, $|y| \geq rt^{\frac{1}{\alpha}}$,
$$
\mathbf{1}_{B(0,\gamma r t^{\frac{1}{\alpha}})}(\cdot) e^{-(t-\tau)A}(x,\cdot)  \leq C_5e^{-tA}(x,0),
$$
$$
\mathbf{1}_{B(0,\gamma r t^{\frac{1}{\alpha}})}(\cdot)E^{\tau }(\cdot,y) \leq C_7t^{-\frac{1}{\alpha}}e^{-tA}(0,y),
$$
where $C_5 \neq C_5(\gamma)$, $C_7 \neq C_7(\gamma)$.
Therefore, using the inequality 
\[
e^{-tA}(x,z)e^{-\tau A}(z,y) \leq K e^{-(t+\tau )A}(x,y)\bigl(e^{-tA}(x,z) + e^{-\tau A}(z,y) \bigr), 
\]
which holds for a constant $K=K(d,\alpha)$, all $x,z,y \in \mathbb R^d$ and $t,\tau>0$ (see e.g. \cite[(9)]{BJ}), we have 
\begin{align}
J(\mathbf{1}_{B(0,\gamma r t^{\frac{1}{\alpha}})}|\cdot|^{1-\alpha}) &\leq ct^{-\frac{1}{\alpha}}\int_0^t \langle \mathbf{1}_{B(0,\gamma r t^{\frac{1}{\alpha}})}(\cdot)|\cdot|^{1-\alpha}\rangle d\tau (e^{-tA}(x,0)+e^{-tA}(0,y)) e^{-2tA}(x,y) \notag \\
& (\text{we are using that $e^{-tA}(x,0)$, $e^{-tA}(0,y) \leq k_0 t^{-\frac{d}{\alpha}}r^{-d+\alpha}$ }) \notag \\
& \leq  2ck_0 t^{-\frac{d+1}{\alpha}} r^{-d+\alpha}  e^{-tA}(x,y) \int_0^t \langle \mathbf{1}_{B(0,\gamma r t^{\frac{1}{\alpha}})}(\cdot)|\cdot|^{1-\alpha} \rangle d \tau \notag \\
& \leq c'  r \gamma^{d-\alpha+1} e^{-tA}(x,y). \label{J}
\tag{$\ast$}
\end{align}

In turn, using Lemma \ref{claim_lem}(\textit{iii}), we obtain
\begin{equation}
\label{J_}
\tag{$\ast\ast$}
J(\mathbf{1}_{B^c(0,\gamma r t^{\frac{1}{\alpha}})}|\cdot|^{1-\alpha}) \leq c'' (\gamma r)^{1-\alpha} e^{-tA}(x,y), \quad c''=\frac{k_2 k_3C}{2}.
\end{equation}

Without loss of generality suppose that $1\leq c'\leq c''$. Take any $0<\gamma<\frac{1}{2}$ and $r>2$ such that $4c''\leq \gamma^{-\frac{d-\alpha}{\alpha}\frac{1} {\alpha-1}}$ and $r\in](4c'')^{\alpha-1}\gamma^{-1},\gamma^{-\frac{d}{\alpha}}[$. Then $c'r \gamma^{d-\alpha+1}\leq c''(\gamma r)^{1-\alpha}\leq\frac{1}{4}$, and so by \eqref{J}, \eqref{J_},
$$
|M_t(x,y)| \leq \frac{1}{2} e^{-tA}(x,y).
$$
Thus,
$$
e^{-t(P^\varepsilon)^*}(x,y) \geq \frac{1}{2} e^{-tA}(x,y), \quad |x| \geq rt^{\frac{1}{\alpha}},\; |y| \geq r t^{\frac{1}{\alpha}}.
$$
Finally, to take $\varepsilon \downarrow 0$, we repeat the argument in the beginning of Section \ref{ub_sect} but use the convergence $e^{-t(P^{\varepsilon_i})^*}h \rightarrow e^{-t\Lambda^*}h$ strongly in $L^r$, $r \in ]1,r_c'[$ for some $\varepsilon_i \downarrow 0$ for every $h \in L^r$ (Proposition \ref{prop5}). This completes the proof of Proposition \ref{claim2_lb}.
\end{proof}

Next, we show that the constant $r$ in Proposition \ref{claim2_lb} can, in fact, be made as small as needed at expense of decreasing the constant in front of $e^{-tA}(x,y)$.

\begin{proposition}
\label{claim3_lb}
For every $r>0$, there is a constant $c(r)>0$ such that
\[
e^{-t\Lambda^*}(x,y) \geq c(r) e^{-tA}(x,y)
\]
whenever $|x| \geq rt^\frac{1}{\alpha}$, $|y| \geq rt^\frac{1}{\alpha}$, $t>0$.
\end{proposition}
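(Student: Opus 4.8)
The plan is to bootstrap from Proposition \ref{claim2_lb}, whose constant we denote $r_0$, using the Chapman--Kolmogorov identity
\[
e^{-(s_1+s_2)\Lambda^*}(x,y)=\int_{\mathbb R^d}e^{-s_1\Lambda^*}(x,z)\,e^{-s_2\Lambda^*}(z,y)\,dz,\qquad s_1,s_2>0,
\]
which holds a.e.\ since $e^{-t\Lambda^*}$ is a $C_0$ semigroup with a nonnegative integral kernel that is (by the upper bound of the previous section) bounded by $Ce^{-tA}(x,y)\varphi_t(x)$. For $r\geq r_0$ the assertion is Proposition \ref{claim2_lb}, so I fix $0<r<r_0$, put $\theta:=\min\bigl\{(r/r_0)^{\alpha},\tfrac14\bigr\}$, and work with the annulus $\mathcal A_t:=\{z\in\mathbb R^d:\ r_0t^{1/\alpha}\leq|z|\leq 2r_0t^{1/\alpha}\}$. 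Two scale facts drive the proof: every $z\in\mathcal A_t$ satisfies $|z|\geq r_0(\sigma t)^{1/\alpha}$ for \emph{all} $\sigma\in\,]0,1]$, so $\mathcal A_t$ is ``$r_0$-far from the origin'' at every intermediate time scale; and the choice of $\theta$ ensures that $|x|\geq rt^{1/\alpha}$ forces $|x|\geq r_0(\theta t)^{1/\alpha}$ (and likewise for $y$), so that after splitting off a short ``buffer'' time $\theta t$ every factor below has its spatial arguments outside the ball of radius $r_0\times(\text{its own time})^{1/\alpha}$, hence is bounded below by $\tfrac12 e^{-\cdot A}$ via Proposition \ref{claim2_lb}.

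I then distinguish three configurations. (a) If $|x|\geq r_0t^{1/\alpha}$ and $|y|\geq r_0t^{1/\alpha}$, Proposition \ref{claim2_lb} applies as is. (b) If exactly one of $x,y$ lies in $B(0,r_0t^{1/\alpha})$, say $x$, I split off one buffer at the $x$-end:
\[
e^{-t\Lambda^*}(x,y)\ \geq\ \int_{\mathcal A_t}e^{-\theta t\Lambda^*}(x,z)\,e^{-(1-\theta)t\Lambda^*}(z,y)\,dz\ \geq\ \tfrac14\int_{\mathcal A_t}e^{-\theta tA}(x,z)\,e^{-(1-\theta)tA}(z,y)\,dz,
\]
the second inequality by Proposition \ref{claim2_lb} applied to each factor; the case $|y|<r_0t^{1/\alpha}\leq|x|$ is symmetric, with the buffer at the $y$-end. (c) If $|x|,|y|<r_0t^{1/\alpha}$, I use two buffers,
\[
e^{-t\Lambda^*}(x,y)\ \geq\ \tfrac18\int_{\mathcal A_t}\!\int_{\mathcal A_t}e^{-\theta tA}(x,z_1)\,e^{-(1-2\theta)tA}(z_1,z_2)\,e^{-\theta tA}(z_2,y)\,dz_1\,dz_2,
\]
again via Proposition \ref{claim2_lb} (here $1-2\theta\geq\tfrac12$, and $|z_1|,|z_2|\geq r_0((1-2\theta)t)^{1/\alpha}$ takes care of the middle factor).

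It remains to bound these $e^{-\cdot A}$-integrals below by $c(r)\,e^{-tA}(x,y)$, using the explicit two-sided bound $e^{-tA}(x,y)\approx t\bigl(|x-y|\vee t^{1/\alpha}\bigr)^{-d-\alpha}$. Every distance occurring inside a buffer factor joins a point of $B(0,r_0t^{1/\alpha})$ to a point of $\mathcal A_t$ (or two points of $\mathcal A_t$) and so is $\leq 4r_0t^{1/\alpha}$; hence each buffer factor is $\geq k_0^{-1}\theta t\,(4r_0t^{1/\alpha})^{-d-\alpha}$, which is $\geq c(r)\,t^{-d/\alpha}$, and the ``long'' middle factor in (c) is likewise $\geq c(r_0)\,t^{-d/\alpha}$. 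In (b) the long factor $e^{-(1-\theta)tA}(z,y)$ is $\geq c(r_0)\,e^{-tA}(x,y)$ because either $|x-y|\geq 6r_0t^{1/\alpha}$, whence $|z-y|$ is comparable to $|x-y|$ and $1-\theta$ is comparable to $1$, or $|x-y|<6r_0t^{1/\alpha}$, whence $e^{-tA}(x,y)$ and $e^{-(1-\theta)tA}(z,y)$ are both comparable to $t^{-d/\alpha}$ (constants depending on $r_0$); in (c) one simply uses $e^{-tA}(x,y)\leq k_0t^{-d/\alpha}$. Since $|\mathcal A_t|$ is comparable to $t^{d/\alpha}$, the powers of $t$ cancel and each case closes with a constant $c(r)>0$ depending on $d,\alpha,\delta,r$ (through $r_0,\theta$). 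I expect the only genuine difficulty to be exactly this scale bookkeeping --- anchoring $\mathcal A_t$ to the scale $t^{1/\alpha}$ so it stays far from $0$ at every intermediate time, while keeping the buffer time $\theta t$ short in terms of $r$ so that $x,y$ themselves remain far from $0$ at scale $(\theta t)^{1/\alpha}$ --- all the $e^{-\cdot A}$ estimates being routine consequences of the displayed kernel bound; no passage $\varepsilon\downarrow 0$ is needed here, since Proposition \ref{claim2_lb} already concerns $e^{-t\Lambda^*}$ itself.
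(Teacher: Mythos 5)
Your argument is correct, and it reaches Proposition \ref{claim3_lb} by a route that differs from the paper's in its decomposition, though the engine is the same (bootstrap from Proposition \ref{claim2_lb} via the reproduction property, with the intermediate variable confined to an annulus at scale $t^{1/\alpha}$ where the two-sided standard bounds on $e^{-tA}$ apply). The paper iterates: one halving step extends validity from $|x|,|y|\geq r_0t^{1/\alpha}$ to $|x|,|y|\geq r_0(t/2)^{1/\alpha}$, passing through an annulus at scale $(t/2)^{1/\alpha}$, and the step is repeated $m$ times with $r_0 2^{-m/\alpha}\leq r$, producing the constant $c_m$. You instead perform a single Chapman--Kolmogorov split $t=\theta t+(1-\theta)t$ (a three-factor split when both endpoints lie in $B(0,r_0t^{1/\alpha})$) with buffer time $\theta=\min\{(r/r_0)^\alpha,\tfrac14\}$, chosen so that $|x|\geq rt^{1/\alpha}$ forces $|x|\geq r_0(\theta t)^{1/\alpha}$, while points of $\mathcal A_t$ stay admissible at every intermediate time and the long time $(1-\theta)t$ or $(1-2\theta)t$ remains $\geq t/2$; your scale bookkeeping checks out, and the kernel estimates you invoke are exactly the displayed two-sided bound, so the constant $c(r)$ comes out explicitly (essentially a power of $\theta$ times dimensional constants). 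Two further points in your favour: no induction is needed, and the mixed configuration --- one endpoint far from the origin, the other in the shell $rt^{1/\alpha}\leq|x|<r_0t^{1/\alpha}$ --- is treated explicitly via the comparability $|z-y|\approx|x-y|$ when $|x-y|\gtrsim r_0t^{1/\alpha}$, whereas the paper's displayed computation reduces the new case to both endpoints lying within $rt^{1/\alpha}$ of the origin (where $e^{-tA}(x,y)\approx t^{-d/\alpha}$) and passes over the mixed case, which would in any event be handled by precisely your comparability argument. What the paper's iteration buys is brevity at each step: only the crude bound $\gtrsim t^{-d/\alpha}$ on the annulus is ever needed, at the price of an $m$-fold degradation of the constant.
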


\begin{proof}
In Proposition \ref{claim2_lb}, there is an $r$ for which
\begin{equation}
\label{e11}
e^{-t\Lambda^*}(x,y) \geq 2^{-1} e^{-tA}(x,y), \quad |x| \geq rt^\frac{1}{\alpha}, \quad |y| \geq rt^\frac{1}{\alpha},
\end{equation}
\begin{equation}
\label{e2}
e^{-t\frac{1}{2}\Lambda^*}(x,y) \geq 2^{-1} e^{-\frac{t}{2}A}(x,y), \quad |x| \geq r\bigg(\frac{t}{2}\bigg)^\frac{1}{\alpha}, \quad |y| \geq r\bigg(\frac{t}{2}\bigg)^\frac{1}{\alpha}.
\end{equation}
We now extend \eqref{e11}, i.e.\,we prove existence of a constant $0<c_1<2^{-1}$ such that
\begin{equation}
\label{e3}
\tag{$\ref{e11}'$}
e^{-t\Lambda^*}(x,y) \geq c_1 e^{-tA}(x,y), \quad |x| \geq r\bigg(\frac{t}{2}\bigg)^\frac{1}{\alpha}, \quad |y| \geq r\bigg(\frac{t}{2}\bigg)^\frac{1}{\alpha}.
\end{equation}
We need to consider only the case $rt^\frac{1}{\alpha} \geq |x| \geq r\bigg(\frac{t}{2}\bigg)^\frac{1}{\alpha}$, $r \geq |y| \geq r\bigg(\frac{t}{2}\bigg)^\frac{1}{\alpha}$. Recall that
\begin{equation}
\label{st_bd}
k_0^{-1} t\bigl(|x-y|^{-d-\alpha} \wedge t^{-\frac{d+\alpha}{\alpha}}\bigr) \leq e^{-tA}(x,y) \leq k_0 t\bigl(|x-y|^{-d-\alpha} \wedge t^{-\frac{d+\alpha}{\alpha}}\bigr), \quad x,y \in \mathbb R^d,\;t>0.
\end{equation}
By the reproduction property,
\begin{align*}
e^{-t\Lambda^*}(x,y) &\geq \langle e^{-\frac{1}{2}t\Lambda^*}(x,\cdot)\mathbf{1}_{B^c\big(0,r\big(\frac{t}{2}\big)^\frac{1}{\alpha}\big)}(\cdot)e^{-\frac{1}{2}t\Lambda^*}(\cdot,y)\rangle \\
& (\text{we are applying \eqref{e2}}) \\
& \geq 2^{-2}\langle  e^{-\frac{1}{2}tA}(x,\cdot)\mathbf{1}_{B^c\big(0,r\big(\frac{t}{2}\big)^\frac{1}{\alpha}\big)}(\cdot)e^{-\frac{1}{2}tA}(\cdot,y)\rangle \\
& > 2^{-2}\langle  e^{-\frac{1}{2}tA}(x,\cdot)\mathbf{1}_{B\big(0,(r+1)\big(\frac{t}{2}\big)^\frac{1}{\alpha}\big) - B\big(0,r\big(\frac{t}{2}\big)^\frac{1}{\alpha}\big)}(\cdot)e^{-\frac{1}{2}tA}(\cdot,y)\rangle \\
& (\text{we are using the lower bound in \eqref{st_bd}}) \\
&\geq 2^{-2} \tilde{c}t^{-\frac{d}{\alpha}} \qquad \text{for appropriate }\tilde{c}=\tilde{c}(r)>0 \\
& (\text{we are using the upper bound in \eqref{st_bd}}) \\
&\geq c_1 e^{-tA}(x,y) \qquad \text{ for appropriate } 0<c_1=c_1(r)<2^{-1},
\end{align*}
i.e.\,we have proved \eqref{e3}.

The same argument yields
\begin{equation}
\label{e4}
\tag{$\ref{e2}'$}
e^{-\frac{1}{2}t\Lambda^*}(x,y) \geq c_1 e^{-\frac{1}{2}tA}(x,y), \quad |x| \geq r\bigg(\frac{t}{2^2}\bigg)^\frac{1}{\alpha}, \quad |y| \geq r\bigg(\frac{t}{2^2}\bigg)^\frac{1}{\alpha}.
\end{equation}

We repeat the above procedure $m-1$ times, obtaining
$$
e^{-t\Lambda^*}(x,y) \geq c_m e^{-tA}(x,y), \quad |x| \geq r\bigg(\frac{t}{2^m}\bigg)^\frac{1}{\alpha}, \quad |y| \geq r\bigg(\frac{t}{2^m}\bigg)^\frac{1}{\alpha}
$$
for appropriate $c_m>0$, from which the assertion of Proposition \ref{claim3_lb} follows.
\end{proof}

We are in position to complete the proof of the lower bound using the so-called $3q$ argument.
Set $q_t(x,y):=\varphi_t^{-1}(x)e^{-t\Lambda^*}(x,y)$.

\smallskip

(a)~Let $x,y \in B^c(0,t^{\frac{1}{\alpha}})$, $x \neq y$. Then, using that $\varphi_{3t}^{-1}\geq \tilde{c}>0$ on $B^c(0,t^{\frac{1}{\alpha}})$, we have by Proposition \ref{claim3_lb}
$$
q_{3t}(x,y) \geq \tilde{c} e^{-3t\Lambda^*}(x,y) \geq c e^{-3tA}(x,y), \quad c= \tilde{c} c(1).
$$

\medskip

Now, let $r_t=rt^{\frac{1}{\alpha}}$, $R_t=Rt^{\frac{1}{\alpha}}$ be as in Proposition \ref{ANcorol2}, where we fix $R_0=1$ (so $r<1<R$). Recall that $\mathbf{1}_{b,a}=\mathbf{1}_{B(0,b)} - \mathbf{1}_{B(0,a)}$. 

\smallskip

(b)~Let $x \in B(0,t^{\frac{1}{\alpha}})$, $|y| \geq rt^{\frac{1}{\alpha}}$, $x \neq y$. By the reproduction property,
\begin{align*}
q_{2t}(x,y) &\geq \varphi_{2t}^{-1}(x)\langle e^{-t\Lambda^*}(x,\cdot) \varphi_t^{-1}(\cdot)\varphi_t(\cdot) e^{-t\Lambda^*}(\cdot,y)\mathbf{1}_{R_t,r_t}(\cdot) \rangle \notag \\
& \geq \varphi_{2t}^{-1}(x)\varphi_t^{-1}(r_t) \langle e^{-t\Lambda^*}(x,\cdot)\varphi_t(\cdot) e^{-t\Lambda^*}(\cdot,y)\mathbf{1}_{R_t,r_t}(\cdot) \rangle \notag \\
& \text{(we are applying Proposition \ref{ANcorol2}(\textit{i}) and using $\varphi_t^{-1}(r_t)=r^{d-\beta}$)} \notag  \\
&\geq e^{-\hat{\mu}-1}r^{d-\beta}\varphi_{2t}^{-1}(x) \varphi_t (x)\inf_{r_t\leq |z|\leq R_t} e^{-t\Lambda^*}(z,y)\notag\\
& (\text{we are using $\varphi_{2t}^{-1} \varphi_t=2^{-\frac{d-\beta}{\alpha}}$ and applying Proposition \ref{claim3_lb}}) \notag \\
&\geq e^{-\hat{\mu}-1}r^{d-\beta} 2^{-\frac{d-\beta}{\alpha}} c(r) e^{-tA}(x,y)\notag \\
& \geq C_1(r) e^{-tA}(x,y). \notag 
\end{align*}

(b') Let $x \in B(0,t^{\frac{1}{\alpha}}), |y| \geq t^{\frac{1}{\alpha}}$, $x \neq y$. Arguing as in (b), we obtain
$$
q_{3t}(x,y) \geq C_2(r) e^{-3tA}(x,y).
$$

(c)~Let $|x| \geq rt^{\frac{1}{\alpha}}$, $y \in B(0,t^{\frac{1}{\alpha}})$, $x \neq y$. We have
\begin{align*}
q_{2t}(x,y) &\geq \varphi^{-1}_{2t}(x) \langle e^{-t\Lambda^*}(x,\cdot)e^{-t\Lambda^*}(\cdot,y)\mathbf{1}_{R_t,r_t}(\cdot) \rangle \notag \\ 
&= \varphi_{2t}^{-1}(x)\langle  e^{-t\Lambda^*}(x,\cdot) e^{-t\Lambda}(y,\cdot)\mathbf{1}_{R_t,r_t}(\cdot)\rangle \notag \\ 
& \geq \varphi_{2t}^{-1}(r_t)\langle  e^{-t\Lambda^*}(x,\cdot) e^{-t\Lambda}(y,\cdot)\mathbf{1}_{R_t,r_t}(\cdot)\rangle \notag \\ 
& (\text{we are using $\varphi_{2t}^{-1}(r_t)=2^{-\frac{d-\beta}{\alpha}}r^{d-\beta}$ and applying Proposition \ref{claim3_lb}}) \notag \\
&\geq 2^{-\frac{d-\beta}{\alpha}}r^{d-\beta}c(r)\langle e^{-tA}(x,\cdot) e^{-t\Lambda}(y,\cdot)\mathbf{1}_{R_t,r_t}(\cdot) \rangle \notag \\ 
& \geq C_3(r)t(Rt^{\frac{1}{\alpha}}+|x|)^{-d-\alpha}\langle e^{-t\Lambda}(y,\cdot)\mathbf{1}_{R_t,r_t}(\cdot)\rangle \notag \\ 
& \text{(we are applying Proposition \ref{ANcorol2}(\textit{ii}))} \notag \\
& \geq C_3(r)2^{-1}t(Rt^{\frac{1}{\alpha}}+|x|)^{-d-\alpha} \geq C_4(r,R) e^{-2tA}(x,y). \notag
\end{align*}

(c') Let $|x| \geq t^{\frac{1}{\alpha}}$, $y \in B(0,t^{\frac{1}{\alpha}})$, $x \neq y$. Arguing as in (c), we obtain
$$
q_{3t}(x,y) \geq C_5(r) e^{-3tA}(x,y).
$$

(d)~Let $x,y \in B(0,t^{\frac{1}{\alpha}})$, $x \neq y$. By the reproduction property,
\begin{align*}
q_{3t}(x,y) & =  \varphi_{3t}^{-1}(x)\langle e^{-t\Lambda^*}(x,\cdot)e^{-2t\Lambda^*}(\cdot,y)\rangle 
 \\
& \geq \varphi_{3t}^{-1}(x)\langle e^{-t\Lambda^*}(x,\cdot)e^{-2t\Lambda^*}(\cdot,y)\mathbf{1}_{R_t,r_t}(\cdot)\rangle \\
& (\text{by (c), we have $e^{-2t\Lambda^*}(\cdot,y)\mathbf{1}_{R_t,r_t}(\cdot) \geq \varphi_{2t}(\cdot)e^{-2tA}(\cdot,y)\mathbf{1}_{R_t,r_t}(\cdot)$}) \\
& \geq \varphi_{3t}^{-1}(x)C_4(r)\langle e^{-t\Lambda^*}(x,\cdot)\varphi_{2t}(\cdot)e^{-2tA}(\cdot,y)\mathbf{1}_{R_t,r_t}(\cdot) \rangle\\
& (\text{we are using  $e^{-2tA}(z,y) \geq c_{r,R}t^{-\frac{d}{\alpha}}>0$ for $r_t \leq |z| \leq R_t$, $|y| \leq t^{\frac{1}{\alpha}}$}) \\
& \geq C_4(r)c_{r,R} t^{-\frac{d}{\alpha}}\varphi_{3t}^{-1}(x) \langle e^{-t\Lambda^*}(x,\cdot)\mathbf{1}_{R_t,r_t}(\cdot) \varphi_{2t}(\cdot)\rangle \\
&\text{(we are applying Proposition \ref{ANcorol2}(\textit{i}))} \\
& \geq C_4(r)c_{r,R}e^{-\hat{\mu}-1}t^{-\frac{d}{\alpha}} \varphi_{3t}^{-1}(x) \varphi_{2t}(x) \\
& (\text{we are using $\varphi_{3t}^{-1} \varphi_{2t}=(3/2)^{-\frac{d-\beta}{\alpha}}$}) \\ 
& \geq C_4(r)c_{r,R}e^{-\hat{\mu}-1}(3/2)^{-\frac{d-\beta}{\alpha}} t^{-\frac{d}{\alpha}} \geq C_5(r,R) e^{-3tA}(x,y).
\end{align*}

By (a), (b'), (c'), (d),
$
q_{3t}(x,y) \geq C e^{-3tA}(x,y)$ for all $x,y \in \mathbb R^d$, $x \neq y$,
and so
$
e^{-3t\Lambda^*}(x,y) \geq C e^{-3tA}(x,y)\varphi_{3t}(x),
$ $x \neq 0$.

\section{Operator realization of $(-\Delta)^{\frac{\alpha}{2}} + b \cdot \nabla$ in $L^2$}

\label{rellich_app}

Recall 
$$
b(x)=\kappa |x|^{-\alpha}x, \quad \kappa=\sqrt{\delta} \frac{2^{\alpha+1}}{d-\alpha}\frac{\Gamma^2(\frac{d}{4}+\frac{\alpha}{4})}{\Gamma^2(\frac{d}{4}-\frac{\alpha}{4})} \equiv \sqrt{\delta} (d-\alpha)^{-1}2c^{2}_{d,\alpha},
$$
where
$c_{d,\alpha}=\frac{\gamma(\frac{d}{2})}{\gamma(\frac{d}{2}-\frac{\alpha}{2})}$, $ \gamma(s):=\frac{2^s\pi^\frac{d}{2}\Gamma(\frac{s}{2})}{\Gamma(\frac{d}{2}-\frac{s}{2})}$.


\begin{proposition}
\label{prop_2}
\label{prop_reg}Let $0<\delta<1$. Then the algebraic sum
$\Lambda:=(-\Delta)^\frac{\alpha}{2}+b\cdot\nabla$, $D(\Lambda)=\mathcal W^{\alpha,2}\;(=\bigl(1+(-\Delta)^{\frac{\alpha}{2}}\bigr)^{-1}L^2)$, is the (minus) generator of a holomorphic semigroup in $L^2$.
\end{proposition}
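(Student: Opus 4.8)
The plan is to realise $\Lambda$ as a relatively bounded perturbation, with relative bound $<1$, of the self-adjoint operator $(-\Delta)^{\frac\alpha2}$, and then to invoke the Hille perturbation theorem \cite[Ch.\,IX, sect.\,2.2]{Ka}, exactly as was done for the regularized operators $P^\varepsilon$ in Remark \ref{rem_hille}. Since $-(-\Delta)^{\frac\alpha2}$ generates a bounded holomorphic (self-adjoint, contraction) semigroup in $L^2$, with $\|(\zeta+(-\Delta)^{\frac\alpha2})^{-1}\|_{2\to2}\le|\zeta|^{-1}$ and $\|(-\Delta)^{\frac\alpha2}(\zeta+(-\Delta)^{\frac\alpha2})^{-1}\|_{2\to2}\le1$ for $\Real\zeta>0$, everything reduces to the estimate
\[
\|b\cdot\nabla f\|_2\le a\,\|(-\Delta)^{\frac\alpha2}f\|_2+C_a\|f\|_2,\qquad f\in\mathcal W^{\alpha,2},\quad a<1.
\]
Granting it, $\Lambda=(-\Delta)^{\frac\alpha2}+b\cdot\nabla$ with $D(\Lambda)=\mathcal W^{\alpha,2}$ is closed and densely defined, and on a sector $\{|\arg\zeta|<\frac\pi2+\eta\}\cap\{|\zeta|>\rho_0\}$ (with $\eta>0$ small and $\rho_0$ large) one has $\|b\cdot\nabla(\zeta+(-\Delta)^{\frac\alpha2})^{-1}\|_{2\to2}\le q<1$; hence
\[
(\zeta+\Lambda)^{-1}=(\zeta+(-\Delta)^{\tfrac\alpha2})^{-1}\sum_{k\ge0}\bigl(-b\cdot\nabla(\zeta+(-\Delta)^{\tfrac\alpha2})^{-1}\bigr)^k
\]
converges in $\mathcal B(L^2)$ and obeys $\|(\zeta+\Lambda)^{-1}\|_{2\to2}\le C(1-q)^{-1}|\zeta|^{-1}$ there, which is precisely the resolvent characterisation of a (minus) generator of a holomorphic semigroup.

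The heart of the matter is thus the displayed estimate, and this is where the fractional Hardy--Rellich inequality \eqref{hardy_ineq} enters --- recall $\kappa=\sqrt\delta\,(d-\alpha)^{-1}2c_{d,\alpha}^2$ with $c_{d,\alpha}$ the sharp Hardy--Rellich constant, so that $0<\delta<1$ is exactly the ``sub-critical'' regime. I would produce the lower-order remainder by localising: fix $\chi\in C_c^\infty(\mathbb R^d)$, $0\le\chi\le1$, $\chi\equiv1$ near the origin. On $\{\chi<1\}$ the field $b$ is bounded, so $\|b(1-\chi)\cdot\nabla f\|_2\le\kappa\|(1-\chi)|x|^{1-\alpha}\|_\infty\|(-\Delta)^{\frac12}f\|_2$, and as $(-\Delta)^{\frac12}=\bigl((-\Delta)^{\frac\alpha2}\bigr)^{1/\alpha}$ with $1/\alpha<1$ the spectral theorem gives $\|(-\Delta)^{\frac12}f\|_2\le\epsilon\|(-\Delta)^{\frac\alpha2}f\|_2+C_\epsilon\|f\|_2$ for every $\epsilon>0$. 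For the singular part one writes $\|b\chi\cdot\nabla f\|_2\le\kappa\||x|^{1-\alpha}\nabla(\chi f)\|_2$ up to terms where $\nabla$ falls on $\chi$ (supported away from $0$, hence of the harmless type just handled), and bounds $\||x|^{1-\alpha}\nabla g\|_2$, $g=\chi f$, by \eqref{hardy_ineq} together with the $L^2$-mapping properties of power-weighted Riesz transforms/potentials (available since $d\ge3$, when the weights involved are of Muckenhoupt class $A_2$); arranging the constants sharply yields $\kappa$ times a factor $\le\sqrt\delta+o(1)$ times $\|(-\Delta)^{\frac\alpha2}(\chi f)\|_2$, and finally $\|(-\Delta)^{\frac\alpha2}(\chi f)\|_2\le\|(-\Delta)^{\frac\alpha2}f\|_2+\|[(-\Delta)^{\frac\alpha2},\chi]f\|_2$, the commutator being a pseudodifferential operator of order $\alpha-1<1$ and so $(-\Delta)^{\frac\alpha2}$-bounded with relative bound $0$. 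Collecting terms gives $a$ as close to $\sqrt\delta<1$ as desired. I expect this last estimate to be the main obstacle: the obstruction is that $\nabla$ is of strictly higher order than $(-\Delta)^{\frac\alpha4}$, so the passage from $\|b\cdot\nabla f\|_2$ to the Hardy--Rellich functional is not one line and needs the weighted/commutator bookkeeping above --- alternatively one may simply cite an available ``$b\cdot\nabla$ is a Rellich perturbation of $(-\Delta)^{\frac\alpha2}$ for $\delta<1$''.

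Finally, it is worth recording the form identity that comes for free: since $\mathrm{div}\,b=(d-\alpha)\kappa|x|^{-\alpha}$, integration by parts gives, for $f\in\mathcal W^{\alpha,2}$, $\Real\langle b\cdot\nabla f,f\rangle=-\tfrac12\langle(\mathrm{div}\,b)|f|^2\rangle=-\sqrt\delta\,c_{d,\alpha}^2\||x|^{-\frac\alpha2}f\|_2^2$, whence by \eqref{hardy_ineq}
\[
\Real\langle\Lambda f,f\rangle\ge(1-\sqrt\delta)\,\|(-\Delta)^{\frac\alpha4}f\|_2^2\ge0;
\]
so the semigroup generated by $-\Lambda$ is an $L^2$-contraction, and, combining with the fractional Sobolev inequality $\|(-\Delta)^{\frac\alpha4}f\|_2^2\ge c_S\|f\|_{2j}^2$, $j=\frac{d}{d-\alpha}$, it enjoys the embedding property $\Real\langle\Lambda f,f\rangle\ge(1-\sqrt\delta)c_S\|f\|_{2j}^2$.
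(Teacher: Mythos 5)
Your overall strategy is the same as the paper's (realize $b\cdot\nabla$ as a small perturbation of $A:=(-\Delta)^{\frac{\alpha}{2}}$ and sum a Neumann series for the resolvent), but the step you yourself single out as the heart of the matter is exactly the one left unproven, and the route you sketch for it would not deliver a usable constant. To close the Neumann series (or Hille) argument you must show that the relative bound of $b\cdot\nabla$ with respect to $A$ is strictly less than $1$, and since $|b(x)|=\kappa|x|^{1-\alpha}$ the natural estimate reduces, after writing $(\zeta+A)^{-1}=(\zeta+A)^{-1+\frac1\alpha}(\zeta+A)^{-\frac1\alpha}$ and using $\|\nabla(\zeta+A)^{-\frac1\alpha}\|_{2\to2}=\|A^{\frac1\alpha}(\zeta+A)^{-\frac1\alpha}\|_{2\to2}\le1$, to the weighted bound $\||b|(\Real\zeta+A)^{-1+\frac1\alpha}\|_{2\to2}\le\kappa\,\||x|^{-(\alpha-1)}(-\Delta)^{-\frac{\alpha-1}{2}}\|_{2\to2}=\kappa\,c(\alpha-1,d)$. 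This is \emph{not} the inequality \eqref{hardy_ineq} (which concerns the exponent $\frac{\alpha}{2}$, i.e.\ the weight $|x|^{-\alpha/2}$ against $(-\Delta)^{\frac{\alpha}{4}}$), but the KPS inequality for the exponent $\alpha-1$, with a \emph{different} sharp constant $c(\alpha-1,d)=\gamma(\frac d2-\alpha+1)/\gamma(\frac d2)$; and the whole point of the proof is the strict comparison $\kappa\,c(\alpha-1,d)<\sqrt{\delta}$, equivalently $c(\alpha-1,d)<(d-\alpha)2^{-1}c_{d,\alpha}^{-2}$, which is a nontrivial Gamma-function inequality (the paper proves it from log-convexity of $\log\Gamma$ together with the identity at $\alpha=2$). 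Your proposal never makes this comparison: invoking ``$L^2$-mapping properties of power-weighted Riesz transforms ($A_2$ weights)'' produces some finite, non-sharp constant, which is useless here because for $\delta$ close to $1$ there is essentially no slack — the product $\kappa\times(\text{constant})$ must come out $\le\sqrt{\delta}<1$, and an $A_2$-type bound gives no control of its size. The claim ``arranging the constants sharply yields a factor $\le\sqrt{\delta}+o(1)$'' is therefore an assertion of precisely the missing estimate, and the fallback ``cite an available result that $b\cdot\nabla$ is a Rellich perturbation for $\delta<1$'' is circular: that statement \emph{is} Proposition \ref{prop_reg}.

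Two further remarks. First, the localization $\chi$/commutator machinery is unnecessary and only obscures the constants: $b$ is exactly homogeneous, so the weighted resolvent bound above handles the singularity at the origin and the behaviour at infinity in one stroke, uniformly on the half-plane $\Real\zeta>0$; this is what gives $\|(\zeta+\Lambda)^{-1}\|_{2\to2}\le(1-\sqrt{\delta})^{-1}|\zeta|^{-1}$ on all of $\Real\zeta>0$ (a bounded holomorphic, in fact contraction, semigroup), whereas your sector-with-$|\zeta|>\rho_0$ version would only give a quasi-bounded one. Second, your closing accretivity computation $\Real\langle\Lambda f,f\rangle\ge(1-\sqrt{\delta})\|(-\Delta)^{\frac{\alpha}{4}}f\|_2^2$ is correct and consistent with the paper's introduction, but it is supplementary and does not substitute for the missing relative-bound estimate.
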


\begin{proof}[Proof of Proposition \ref{prop_reg}]
We show that $b \cdot \nabla$ is Rellich's perturbation of $(-\Delta)^{\frac{\alpha}{2}}$.

For brevity, write $\|\cdot\| \equiv \|\cdot\|_{2\rightarrow 2}$ and $A\equiv(-\Delta)^{\frac{\alpha}{2}}$ in $L^2$.

Define $T=b\cdot\nabla(\zeta+A)^{-1}$, $\Real \zeta>0$, and note that 
\begin{align*}
\|T\| &\leq \||b|(\zeta+A)^{-1+\frac{1}{\alpha}}\|\|\nabla(\zeta+A)^{-\frac{1}{\alpha}}\| \\
& (\text{we are using $\|\nabla g\|_2=\|(-\Delta)^{\frac{1}{2}}g\|_2$}) \\
& \leq \||b|(\Real \zeta+A)^{-1+\frac{1}{\alpha}}\|\|A^{\frac{1}{\alpha}}(\zeta+A)^{-\frac{1}{\alpha}}\| \\
& (\text{by the Spectral Theorem, $\|A^\frac{1}{\alpha}(\zeta+A)^{-\frac{1}{\alpha}}\|\leq 1$}) \\
& \leq \||b|(-\Delta)^{-\frac{\alpha-1}{2}}\|  = \kappa\||x|^{-\alpha+1}(-\Delta)^{-\frac{\alpha-1}{2}}\| \\
& (\text{we are using \cite[Lemma 2.7]{KPS}}) \\
&=\kappa c(\alpha-1,d)<\sqrt{\delta}, 
\end{align*}
where $c(\alpha-1,d):=\gamma(\frac{d}{2}-\alpha+1)/\gamma(\frac{d}{2})$ and, recall,  $\kappa=\sqrt{\delta} (d-\alpha)^{-1}2c^{2}_{d,\alpha}$. The last inequality holds
 because $c(\alpha-1,2)<(d-\alpha)2^{-1}c_{d,\alpha}^{-2}$ or, equivalently,
\[
F(\alpha)\equiv(d-\alpha)\Gamma\big(\frac{d-2+2\alpha}{4}\big)\big[\Gamma\big(\frac{d-\alpha}{4}\big)\big]^2-4 \Gamma\big(\frac{d+2-2\alpha}{4}\big)\big[\Gamma\big(\frac{d+\alpha}{4}\big)\big]^2>0
\]
(the latter is due to $\frac{d^2}{dt^2}\log\Gamma(t)\geq 0$ and $F(2)=0$ $\big((d-2)\Gamma(\frac{d-2}{4})=4 \Gamma(\frac{d+2}{4})\big)$).

Thus, the Neumann series for $(\zeta+\Lambda)^{-1}=(\zeta+A)^{-1}(1+T)^{-1}$ converges, and
$$
\|(\zeta+\Lambda)^{-1}\| \leq (1-\sqrt{\delta})^{-1}|\zeta|^{-1}, \quad \Real \zeta>0,
$$
i.e.\,$-\Lambda$ is the generator of a holomorphic semigroup. 
\end{proof}

\begin{proposition} 
\label{prop_2_conv}
In the assumptions of Proposition \ref{prop_2}, $e^{-tP^\varepsilon}\overset{s}\rightarrow e^{-t\Lambda}$ in $L^2$.
\end{proposition}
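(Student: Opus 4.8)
The plan is to deduce the statement from a Trotter--Kato approximation argument, which requires two inputs: a uniform bound on the semigroups $e^{-tP^\varepsilon}$ and strong resolvent convergence. Since $\delta<1$ we have $r_c=\frac{2}{2-\sqrt\delta}<2$, so estimate \eqref{est_c}, applied with $r=2$, gives $\|e^{-tP^\varepsilon}\|_{2\to2}\le1$ for all $\varepsilon>0$, $t\ge0$; equivalently $(\mu+P^\varepsilon)^{-1}=\int_0^\infty e^{-\mu t}e^{-tP^\varepsilon}\,dt$ exists for every $\mu>0$ with $\|(\mu+P^\varepsilon)^{-1}\|_{2\to2}\le\mu^{-1}$, uniformly in $\varepsilon$. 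By Proposition~\ref{prop_2}, $-\Lambda$ generates a bounded holomorphic semigroup on $L^2$ with $D(\Lambda)=\mathcal W^{\alpha,2}$ and $\|(\mu+\Lambda)^{-1}\|_{2\to2}\le(1-\sqrt\delta)^{-1}\mu^{-1}$. Hence, by the Trotter--Kato theorem, it suffices to prove $(\mu+P^\varepsilon)^{-1}f\to(\mu+\Lambda)^{-1}f$ in $L^2$ for a single $\mu>0$ and every $f\in L^2$; the claimed convergence $e^{-tP^\varepsilon}\overset{s}{\rightarrow}e^{-t\Lambda}$, locally uniformly in $t\ge0$, then follows.

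For the resolvent convergence, fix $f\in L^2$ and put $v:=(\mu+\Lambda)^{-1}f\in\mathcal W^{\alpha,2}$. The range of $(\mu+\Lambda)^{-1}$ equals $\mathcal W^{\alpha,2}=D((-\Delta)^{\alpha/2})$, which is contained in the domain of $(b_\varepsilon-b)\cdot\nabla+U_\varepsilon$, so the second resolvent identity reads
\[
(\mu+P^\varepsilon)^{-1}f-(\mu+\Lambda)^{-1}f=-(\mu+P^\varepsilon)^{-1}\big[(b_\varepsilon-b)\cdot\nabla v+U_\varepsilon v\big].
\]
Thus, by the uniform resolvent bound, everything reduces to showing that $(\mu+P^\varepsilon)^{-1}$ sends the error $(b_\varepsilon-b)\cdot\nabla v+U_\varepsilon v$ to $0$ in $L^2$ as $\varepsilon\downarrow0$. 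The drift part is the clean part: $|(b_\varepsilon-b)\cdot\nabla v|\le\kappa|x|^{1-\alpha}|\nabla v|$, and by the fractional Hardy--Rellich inequality with exponent $\alpha-1\in\,]0,1[\,\subset\,]0,d/2[$ (applied to $\nabla v$) this majorant is a fixed element of $L^2$, of norm $\le\kappa\,c(\alpha-1,d)\,\|(-\Delta)^{\alpha/2}v\|_2$; since $b_\varepsilon\to b$ pointwise off the origin, dominated convergence gives $\|(b_\varepsilon-b)\cdot\nabla v\|_2\to0$, whence $\|(\mu+P^\varepsilon)^{-1}[(b_\varepsilon-b)\cdot\nabla v]\|_2\le\mu^{-1}\|(b_\varepsilon-b)\cdot\nabla v\|_2\to0$.

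The potential part $U_\varepsilon v$ is the main obstacle. When $2\alpha<d$ it is handled exactly as the drift part: $U_\varepsilon\le\alpha\kappa|x|^{-\alpha}$ and the fractional Hardy--Rellich inequality with exponent $\alpha$ make $|x|^{-\alpha}|v|$ a fixed $L^2$ function, $U_\varepsilon v\to0$ a.e., so $\|U_\varepsilon v\|_2\to0$ and the factor $\mu^{-1}$ closes the estimate. The remaining case $2\alpha\ge d$ (which forces $d=3$, $\alpha\in[3/2,2[$) is genuinely delicate: the bound $U_\varepsilon\le\alpha\kappa|x|^{-\alpha}$ no longer places $|x|^{-\alpha}|v|$ in $L^2$, and in fact $\|U_\varepsilon v\|_2$ can diverge, because the mass of $U_\varepsilon$ concentrates on $B(0,\sqrt\varepsilon)$ where $U_\varepsilon\sim\varepsilon^{-\alpha/2}$. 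Here I would use that this very concentration makes $\|U_\varepsilon\|_{L^1}=O(\varepsilon^{(d-\alpha)/2})\to0$, that $\mathcal W^{\alpha,2}(\mathbb R^3)\hookrightarrow L^\infty$ in this regime (so $\|U_\varepsilon v\|_{L^p}\le\|v\|_\infty\|U_\varepsilon\|_{L^p}\to0$ for a suitable $p\in[r_c,2[$ with $p<d/\alpha$), and that the resolvents $(\mu+P^\varepsilon)^{-1}$ smooth $L^p\to L^2$ with a bound independent of $\varepsilon$ --- the last point being where the uniform Nash-type estimates of Section~\ref{proof_thm2_sect} enter, since interpolating \eqref{est_c} and \eqref{P_e} gives $\|e^{-tP^\varepsilon}\|_{p\to2}\le c\,t^{-\frac d\alpha(\frac1p-\frac12)}$ uniformly in $\varepsilon$, and the resulting integral $\int_0^\infty e^{-\mu t}\min\!\big(1,t^{-\frac d\alpha(\frac1p-\frac12)}\big)\,dt$ converges because $d<2\alpha$ here. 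Then $\|(\mu+P^\varepsilon)^{-1}(U_\varepsilon v)\|_2\le\big(\sup_\varepsilon\|(\mu+P^\varepsilon)^{-1}\|_{p\to2}\big)\|U_\varepsilon v\|_{p}\to0$, completing the proof.

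The step demanding the most care is thus precisely this uniform-in-$\varepsilon$ smoothing of $(\mu+P^\varepsilon)^{-1}$ for the potential term in the borderline range $2\alpha\ge d$: plain $L^2$-contractivity of $e^{-tP^\varepsilon}$ does not suffice there, and one must fall back on the machinery already developed for Theorem~\ref{thm2} (and, if the admissible exponents $p$ become too constrained, on a direct estimate of $(\mu+P^\varepsilon)^{-1}(U_\varepsilon v)$ using $\|U_\varepsilon\|_1\to0$ together with the uniform kernel bound \eqref{apr_ub} for $e^{-tP^\varepsilon}$, which is established independently of the present proposition).
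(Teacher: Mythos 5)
Your overall frame (reduce to strong resolvent convergence via Trotter--Kato, use the second resolvent identity on the common domain $\mathcal W^{\alpha,2}$, treat the drift error by the fractional Hardy inequality with exponent $\alpha-1$ plus dominated convergence) is sound and close in spirit to the paper, and your diagnosis that the potential term $U_\varepsilon v$ is the real issue when $d=3$, $2\alpha\ge d$ is correct. But your repair of that case has a genuine gap. Your main route needs an exponent $p\in[r_c,2[$ with $p<d/\alpha$ (the lower constraint $p\ge r_c$ is forced because the uniform ultracontractivity \eqref{P_e} and the contraction \eqref{est_c} are only available for $r\ge r_c$, and the upper constraint $p<d/\alpha$ is forced by $\|U_\varepsilon\|_p\to0$). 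This window can be empty under the hypotheses of Proposition \ref{prop_2}: e.g.\ $d=3$, $\alpha=1.8$, $\delta\in\,]0.64,1[$ gives $r_c=\frac{2}{2-\sqrt\delta}>\frac{3}{\alpha}$. In addition, at the borderline $\alpha=\frac{3}{2}$, $d=3$ neither branch applies as written ($\mathcal W^{3/2,2}(\mathbb R^3)\not\hookrightarrow L^\infty$, and Hardy with exponent $\alpha$ needs $\alpha<d/2$ strictly). Your last-resort fallback via the kernel bound \eqref{apr_ub} is only gestured at, and as stated it leans on $\|U_\varepsilon\|_1\to0$, which by itself is not enough: the bound \eqref{apr_ub} carries the weight $\varphi_t$, which is singular exactly where $U_\varepsilon$ concentrates, so what one actually has to check is $\|\varphi_t U_\varepsilon\|_1\to0$ (true, because $\beta>\alpha$) together with integrability in $t$ near $0$ of $t^{-\frac{d}{2\alpha}}$ (true only because $d<2\alpha$ in this regime). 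None of this is carried out, so the proof is incomplete precisely in the case you yourself identify as the delicate one.

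The paper avoids the entire case analysis by a different ordering in the resolvent identity: it first passes from $\Lambda$ to $\Lambda^\varepsilon=(-\Delta)^{\alpha/2}+b_\varepsilon\cdot\nabla$ by a Neumann-series/dominated-convergence argument, and then writes $(\mu+P^\varepsilon)^{-1}-(\mu+\Lambda^\varepsilon)^{-1}=-(\mu+\Lambda^\varepsilon)^{-1}U_\varepsilon(\mu+P^\varepsilon)^{-1}$, i.e.\ the singular potential is hit on the left by the resolvent \emph{without} $U_\varepsilon$, for which the factorization \eqref{re_1} and the KPS inequality give the uniform weighted bound $\|(\mu+\Lambda^\varepsilon)^{-1}|x|^{1-\alpha}\|_{2\to2}\le C$, while on the right one only uses $\|(\mu+P^\varepsilon)^{-1}h\|_\infty\le\mu^{-1}\|h\|_\infty$ for $h\in L^2\cap L^\infty$. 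One is then left with $\||x|^{\alpha-1}U_\varepsilon\mathbf 1_{B(0,1)}\|_2\le\|\varepsilon|x|_\varepsilon^{-2}|x|^{-1}\mathbf 1_{B(0,1)}\|_2\to0$, which holds for all $d\ge3$ and $1<\alpha<2$ with no restriction on $2\alpha$ versus $d$. If you want to keep your ordering $(\mu+P^\varepsilon)^{-1}[U_\varepsilon v]$, you would need to import an analogous weighted mapping property of $(\mu+P^\varepsilon)^{-1}$ (uniform in $\varepsilon$), or genuinely execute the kernel-bound fallback with $\|\varphi_tU_\varepsilon\|_1$; as written, the argument does not cover all $0<\delta<1$, $1<\alpha<2$, $d=3$.
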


\begin{proof}
It suffices to show that $(\mu+P^\varepsilon)^{-1}\overset{s}\rightarrow (\mu+\Lambda)^{-1}$ (in $L^2$) for a $\mu>0$. 

First, we show that $(\mu+\Lambda^\varepsilon)^{-1}\overset{s}\rightarrow (\mu+\Lambda)^{-1}$.
We will use notation introduced in the proof of Proposition \ref{prop_reg} above. Recall: $(\mu+\Lambda)^{-1}=(\mu+A)^{-1}(1+T)^{-1}$, $\|(\mu+\Lambda)^{-1}\|\leq (1-\sqrt{\delta})^{-1}\mu^{-1}$. Since $\|(T-T_\varepsilon)f\|_2\leq \||b-b_\varepsilon|(\mu+A)^{-1}|\nabla f|\|_2\rightarrow 0$ for every $f\in C_c^\infty$ by the Dominated Convergence Theorem, we have $T_\varepsilon \overset{s}\rightarrow T$. Therefore, $(\mu+\Lambda^\varepsilon)^{-1}\overset{s}\rightarrow (\mu+\Lambda)^{-1}$.

We show that $(\mu+P^\varepsilon)^{-1}-(\mu+\Lambda^\varepsilon)^{-1}\overset{s}\rightarrow 0$. 
 Set $S=(\mu+A)^{-1+\frac{1}{\alpha}}b\cdot \nabla(\mu+A)^{-\frac{1}{\alpha}}$ and $S_\varepsilon=(\mu+A)^{-1+\frac{1}{\alpha}}b_\varepsilon \cdot \nabla(\mu+A)^{-\frac{1}{\alpha}}$. Then $\sup_\varepsilon\|S_\varepsilon\|<1$, $\|S\|<1$ and
\begin{equation}
\label{re_1}
\tag{$\star$}
(\mu+\Lambda^\varepsilon)^{-1}=(\mu+A)^{-\frac{1}{\alpha}}(1+S_\varepsilon)^{-1}(\mu+A)^{-1+\frac{1}{\alpha}}, \quad \mu>0.
\end{equation}

Now, let $h\in L^2\cap L^\infty$. Then $$\|(\mu+P^\varepsilon)^{-1}h-(\mu+\Lambda^\varepsilon)^{-1}h\|_2=\|(\mu+\Lambda^\varepsilon)^{-1}U_\varepsilon(\mu+P^\varepsilon)^{-1}h\|_2\leq K_1+K_2,$$ 
\begin{align*}
K_1&=\|(\mu+\Lambda^\varepsilon)^{-1}U_\varepsilon\mathbf 1_{B(0,1)}(\mu+P^\varepsilon)^{-1}h\|_2 \\
&
\leq \|(\mu+\Lambda^\varepsilon)^{-1}|x|^{-\alpha+1}\|\||x|^{\alpha-1}U_\varepsilon\mathbf 1_{B(0,1)}\|_2\mu^{-1}\|h\|_\infty \\
& \text{(we are using \eqref{re_1})}\\
&\leq C\mu^{-1}\|h\|_\infty\|\varepsilon|x|_\varepsilon^{-2}|x|^{-1}\mathbf 1_{B(0,1)}\|_2\rightarrow 0,
\end{align*} 
$$K_2=\|(\mu+\Lambda^\varepsilon)^{-1}U_\varepsilon\mathbf 1_{B^c(0,1)}(\mu+P^\varepsilon)^{-1}h\|_2\leq \kappa\alpha\varepsilon(1-\sqrt{\delta})^{-1}\mu^{-2}\|h\|_2\rightarrow 0.$$
The convergence $e^{-tP^\varepsilon}\overset{s}\rightarrow e^{-t\Lambda}$ is established.
\end{proof}

Similar arguments show that $e^{-t(P^\varepsilon)^*}\overset{s}\rightarrow e^{-t\Lambda^*}$ in $L^2$, $\Lambda^* \equiv \Lambda^*_2$ (cf.\,Section \ref{appA_}).

\begin{remark}
Above we could have constructed an operator realization $\Lambda$ of $(-\Delta)^{\frac{\alpha}{2}} +  b \cdot \nabla$ on $L^2$ for $b(x):=\sqrt{\delta_2} c^{-2}(\frac{\alpha-1}{2},d) |x|^{-\alpha}x$, $0<\delta_2<1$, where $c^{-2}(\frac{\alpha-1}{2},d)=\gamma(\frac{d}{2}-\frac{\alpha-1}{2})/\gamma(\frac{d}{2})$ following the arguments in \cite[sect.\,4]{KiS1}. Note that
$$
c^{-1}(\alpha-1,d)<c^{-2}(\frac{\alpha-1}{2},d)
$$ 
(indeed, $\Gamma(\frac{d+2-2\alpha}{4})[\Gamma(\frac{d-1+\alpha}{4})]^2 - \Gamma(\frac{d-2+2\alpha}{4})[\Gamma(\frac{d+1-\alpha}{4})]^2>0$), i.e.\,these assumptions are less restrictive than the ones in the proof of Proposition \ref{prop_reg}. 

Then, in particular, 
$$
\|e^{-t\Lambda}f\|_{q} \leq c_rt^{-j'\left(\frac{1}{r}-\frac{1}{q}\right)}\|f\|_r, \quad f \in L^r \cap L^q, \quad 2 \leq r < q \leq \infty
$$
(arguing as in the proof of \cite[Theorem 4.3]{KiS1}).
\end{remark}

\bigskip

\section{Operator realizations of $(-\Delta)^{\frac{\alpha}{2}} + b \cdot \nabla$ in $L^r$, $r \in ]r_c,\infty[$, and in $L^1_{\scriptscriptstyle \sqrt{\varphi}}$}

\label{appA}

We are using notation introduced in the beginning of Section \ref{rellich_app}.

\begin{proposition}
\label{prop4}
Let $0< \delta<4$. The following is true:

{\rm(\textit{i})} There exists an operator realization $\Lambda_{r}(b)$ of $(-\Delta)^{\frac{\alpha}{2}} + b \cdot \nabla$ in $L^r$, $r \in ]r_c,\infty[$, $r_c=\frac{2}{2-\sqrt{\delta}}$, as the (minus) generator of a contraction $C_0$ semigroup 
$$
e^{-t\Lambda_r(b)}:=s\mbox{-}L^r\mbox{-}\lim_{n} e^{-tP^{\varepsilon_n}} \quad (\text{loc.\,uniformly in $t \geq 0$}), \quad r \in ]r_c,\infty[,
$$
for a sequence $\{\varepsilon_n\} \downarrow 0$.

{\rm(\textit{ii})} There exists an operator realization $\Lambda_{\scriptsize{1,\sqrt{\varphi}}}(b)$ of $(-\Delta)^{\frac{\alpha}{2}} + b \cdot \nabla$ in $L^1_{\scriptscriptstyle \sqrt{\varphi}}$, $\varphi \equiv \varphi_s$, as the (minus) generator of a quasi contraction $C_0$ semigroup
$$
e^{-t\Lambda_{\scriptsize{1,\sqrt{\varphi}}}(b)}:=s\mbox{-}L^1_{\scriptsize{\scriptscriptstyle \sqrt{\varphi}}}\mbox{-}\lim_{n} e^{-tP^{\varepsilon_n}} \quad (\text{loc.\,uniformly in $t \geq 0$})
$$
for a sequence $\{\varepsilon_n\} \downarrow 0$.

\smallskip

The semigroups in {\rm(\textit{i})}, {\rm(\textit{ii})} are consistent: $e^{-t\Lambda_r(b)} \upharpoonright L^r \cap L^1_{\scriptscriptstyle \sqrt{\varphi}}=e^{-t\Lambda_{\scriptsize{1,\sqrt{\varphi}}}(b)} \upharpoonright L^r \cap L^1_{\scriptscriptstyle \sqrt{\varphi}}$.

\smallskip

{\rm(\textit{iii})} For every $u \in D(\Lambda_r(b))$, $r \in ]r_c,\infty[$,
$$
\langle \Lambda_r(b)u,h\rangle=\langle u, (-\Delta)^\frac{\alpha}{2}h\rangle - \langle u,b\cdot \nabla h\rangle - \langle u, ({\rm div\,}b)h\rangle, \qquad h \in C_c^\infty.
$$

\end{proposition}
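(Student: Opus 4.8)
The plan is to obtain all three assertions by passing to the limit $\varepsilon\downarrow 0$ in the regularized semigroups $e^{-tP^{\varepsilon}}$ and invoking the Trotter--Kato theorem, the key being a compactness argument for the approximating resolvents. All the \emph{uniform in $\varepsilon$} a priori bounds needed have already been established in the proof of Theorem~\ref{thm2}: by \eqref{est_c} the operators $e^{-tP^{\varepsilon}}$ are positivity preserving contractions on $L^r$ for every $r\in[r_c,\infty]$ (and on $C_u$); by the bound $(S_{4,\varepsilon})$ they are quasi-contractions on $L^1_{\scriptscriptstyle\sqrt{\varphi}}$ with exponent $\hat c/s$ independent of $\varepsilon$; by Corollary~\ref{cor2} they obey the weighted Nash estimate $e^{-tP^{\varepsilon}}(x,y)\le Ct^{-d/\alpha}\varphi_t(y)$ with $C\neq C(\varepsilon)$; and by \eqref{P_e} they satisfy $\|e^{-tP^{\varepsilon}}\|_{r\to\infty}\le ct^{-d/(\alpha r)}$, $c\neq c(\varepsilon)$. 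What makes this delicate is precisely that $b\cdot\nabla$ is a \emph{supercritical} perturbation of $(-\Delta)^{\frac\alpha2}$ (morally, $\nabla$ is stronger than $(-\Delta)^{\frac\alpha4}$), so the Neumann series $(\mu+P^{\varepsilon})^{-1}=(\mu+(-\Delta)^{\frac\alpha2})^{-1}(1+T_\varepsilon)^{-1}$ degenerates as $\varepsilon\downarrow 0$ and the resolvents cannot be controlled perturbatively.

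First I would fix $\mu>0$ and $0\le f\in C_c^\infty$ and set $v_\varepsilon:=(\mu+P^{\varepsilon})^{-1}f\ge 0$. Testing the resolvent equation $\mu v_\varepsilon+(-\Delta)^{\frac\alpha2}v_\varepsilon+b_\varepsilon\cdot\nabla v_\varepsilon+U_\varepsilon v_\varepsilon=f$ against $v_\varepsilon^{r-1}$, integrating the drift term by parts and absorbing it via the fractional Hardy--Rellich inequality \eqref{hardy_ineq} exactly as in the derivation of \eqref{est_n}, one gets, for every $r\in\,]r_c,\infty[$ and uniformly in $\varepsilon$,
\[
\mu\|v_\varepsilon\|_r^r+c_{r,\delta}\,\|A^{\frac12}v_\varepsilon^{\frac r2}\|_2^2\le\langle f,v_\varepsilon^{r-1}\rangle\le C(f,\mu),\qquad c_{r,\delta}:=\tfrac1r\bigl(\tfrac{4}{r'}-2\sqrt\delta\bigr)>0,
\]
so $v_\varepsilon^{r/2}$ is bounded in $\mathcal W^{\frac\alpha2,2}$. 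Combining the compact embedding of $\mathcal W^{\frac\alpha2,2}$ into $L^2_{\loc}$ with the uniform bounds $\|v_\varepsilon\|_\infty\le\mu^{-1}\|f\|_\infty$, $\|v_\varepsilon\|_r\le\mu^{-1}\|f\|_r$ and a tightness-at-infinity estimate for $v_\varepsilon$ (for $|x|$ away from $\sprt f$, comparing $v_\varepsilon$ with an explicit supersolution of $\mu+P^{\varepsilon}$, or iterating the off-diagonal decay as in the proof of the upper bound in Section~\ref{ub_sect}), I would extract a sequence $\varepsilon_n\downarrow 0$ along which $v_{\varepsilon_n}\to v$ a.e.\ and in $L^r(\mathbb R^d)$; a diagonal argument over a countable dense subset of $C_c^\infty$ makes $\{\varepsilon_n\}$ independent of $f$, and by uniform $L^r$-contractivity and Remark~\ref{rem_conv_2} the limits $R_\mu f:=L^r\text{-}\lim_n(\mu+P^{\varepsilon_n})^{-1}f$ then exist for every $f\in L^r$ and all $r\in\,]r_c,\infty[$ simultaneously, with $\|R_\mu\|_{r\to r}\le\mu^{-1}$. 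The identical scheme, now with the quasi-contraction bound $(S_{4,\varepsilon})$ in place of \eqref{est_c}, produces the limit resolvent on $L^1_{\scriptscriptstyle\sqrt{\varphi}}$, bounded by $(\mu-\hat c/s)^{-1}$ for $\mu>\hat c/s$.

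Next I would identify these limits. The resolvent identity passes to the limit, so $\{R_\mu\}$ is a pseudo-resolvent; moreover $R_\mu$ has dense range because, for $f\in C_c^\infty$, $\mu R_\mu f-f=-\lim_n(\mu+P^{\varepsilon_n})^{-1}P^{\varepsilon_n}f$ with $\|P^{\varepsilon_n}f\|_r$ bounded (the singular part $U_{\varepsilon_n}f\to 0$ in $L^r$ by dominated convergence, since $U_{\varepsilon_n}f\le\alpha\kappa|x|^{-\alpha}f\in L^r$), whence $\mu R_\mu f\to f$ in $L^r$ as $\mu\to\infty$. Hence $R_\mu=(\mu+\Lambda_r(b))^{-1}$ for a densely defined closed operator $\Lambda_r(b)$, by Hille--Yosida $-\Lambda_r(b)$ generates a contraction $C_0$ semigroup, and Trotter--Kato gives $e^{-t\Lambda_r(b)}=s\text{-}L^r\text{-}\lim_n e^{-tP^{\varepsilon_n}}$ locally uniformly in $t\ge 0$, which is (\textit{i}); the same reasoning in $L^1_{\scriptscriptstyle\sqrt{\varphi}}$ (quasi-contraction case) gives (\textit{ii}). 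Positivity preservation is inherited from $e^{-tP^{\varepsilon_n}}$, and consistency on the overlap $L^r\cap L^1_{\scriptscriptstyle\sqrt{\varphi}}$ holds because all these semigroups were obtained along the \emph{same} sequence $\{\varepsilon_n\}$ and agree on the common dense set $C_c^\infty$. For (\textit{iii}), for $u=R_\mu f$ with $f\in C_c^\infty$ each $v_{\varepsilon_n}$ satisfies, for $h\in C_c^\infty$, $\mu\langle v_{\varepsilon_n},h\rangle+\langle v_{\varepsilon_n},(-\Delta)^{\frac\alpha2}h\rangle-\langle v_{\varepsilon_n},b_{\varepsilon_n}\cdot\nabla h\rangle-\langle v_{\varepsilon_n},({\rm div\,}b_{\varepsilon_n})h\rangle=\langle f,h\rangle$ (using $(P^{\varepsilon})^*h=(-\Delta)^{\frac\alpha2}h-b_{\varepsilon}\cdot\nabla h-({\rm div\,}b_{\varepsilon})h$ and ${\rm div\,}b_{\varepsilon}=W_{\varepsilon}+U_{\varepsilon}$); letting $\varepsilon_n\downarrow 0$, with $v_{\varepsilon_n}\to u$ in $L^r$ and a.e., $v_{\varepsilon_n}$ uniformly bounded, $b_{\varepsilon_n}\to b$ and ${\rm div\,}b_{\varepsilon_n}\to{\rm div\,}b$ a.e.\ under the dominations $|b_{\varepsilon_n}|\le\kappa|x|^{1-\alpha}$, $|{\rm div\,}b_{\varepsilon_n}|\le d\kappa|x|^{-\alpha}$ (both in $L^1(\sprt h)$ since $\alpha<2\le d$), dominated convergence yields $\langle\Lambda_r(b)u,h\rangle=\langle u,(-\Delta)^{\frac\alpha2}h\rangle-\langle u,b\cdot\nabla h\rangle-\langle u,({\rm div\,}b)h\rangle$; since both sides are continuous in $u$ in the graph norm of $\Lambda_r(b)$ and $R_\mu C_c^\infty$ is a core, this extends to all $u\in D(\Lambda_r(b))$.

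The main obstacle is the compactness step: upgrading $L^r_{\loc}$-convergence of the resolvents to \emph{strong} $L^r(\mathbb R^d)$-convergence, which needs both the energy estimate together with Hardy--Rellich to tame the supercritical drift and a genuine tightness-at-infinity bound; closely tied to it is the verification that the limiting pseudo-resolvent has dense range, i.e.\ that no mass is lost in the limit $\varepsilon\downarrow 0$. Everything else is a routine, if somewhat lengthy, bookkeeping of dominated convergences.
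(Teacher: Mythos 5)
Your overall architecture coincides with the paper's proof: uniform-in-$\varepsilon$ resolvent bounds obtained by testing $(\mu+P^\varepsilon)v=f$ against $v^{r-1}$ and absorbing the drift via the Hardy--Rellich inequality \eqref{hardy_ineq}; the weighted quasi-contraction bound in $L^1_{\scriptscriptstyle\sqrt{\varphi}}$ from $(P^\varepsilon)^*\varphi\geq -cs^{-1}$; local compactness of $\{v_\varepsilon\}$ from the energy estimate via the fractional Rellich--Kondrashov theorem; tightness at infinity from the $\varepsilon$-uniform upper bound of Section \ref{ub_sect} (the paper integrates \eqref{apr_ub} in time to get $v_n(x)\leq C_f|x|^{-d-\alpha}$, which is your second suggested route); a diagonal subsequence over a countable dense family; Trotter--Kato; and passage to the limit in the weak formulation for (\textit{iii}). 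This is essentially the same proof as in the paper (Claims \ref{claim_t1} and \ref{claim_t2}).

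There is, however, one concretely flawed step: your dense-range argument. For a general $f\in C_c^\infty$ the dominations you invoke fail: near the origin $U_{\varepsilon_n}f\lesssim |x|^{-\alpha}$ belongs to $L^r$ only if $\alpha r<d$, and $b_{\varepsilon_n}\cdot\nabla f\lesssim|x|^{1-\alpha}$ belongs to $L^r$ only if $(\alpha-1)r<d$; hence $\sup_n\|P^{\varepsilon_n}f\|_r=\infty$ in general, and the bound $\|\mu R_\mu f-f\|_r\leq \mu^{-1}\sup_n\|P^{\varepsilon_n}f\|_r$ gives nothing. This is not a marginal case: $r$ ranges over all of $]r_c,\infty[$, and as soon as $\sqrt{\delta}\geq 2-\frac{2\alpha}{d}$ (e.g.\ $\delta\geq1$ for $d=3$, $\alpha=\frac32$) one has $r_c\geq \frac{d}{\alpha}$, so \emph{every} admissible $r$ violates the domination --- precisely the supercritical regime the proposition is designed for, and there is then no small admissible $r_0$ from which to interpolate. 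Two repairs: (a) take $f\in C_c^\infty(\mathbb R^d\setminus\{0\})$, still dense in $L^r$, so that $P^{\varepsilon_n}f$ is bounded in $L^r$ uniformly in $n$; or (b) follow the paper's Claim \ref{claim_t1}: prove $\mu(\mu+P^\varepsilon)^{-1}\rightarrow 1$ uniformly in $\varepsilon$ first in $L^1_{\scriptscriptstyle\sqrt{\varphi}}$, using the second resolvent identity together with \eqref{tri_ast} and the key fact that the weight absorbs the singularity ($\varphi\,\mathbf 1_{B(0,s^{1/\alpha})}|b|\in L^1$ because $\beta>\alpha$), and then transfer to $L^r$ by interpolating with the uniform $L^\infty$ bound. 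A smaller remark of the same flavour concerns your last step in (\textit{iii}): the ``graph-norm continuity plus core'' extension requires $({\rm div\,}b)h\in L^{r'}$, i.e.\ $\alpha r'<d$, which fails for $r$ close to $r_c$ when $\delta$ is small (then $r'$ is large); the paper avoids this by working with the exponent $r=\frac{1+\sigma}{\sigma}$ and $L^{1+\sigma}_{\loc}$-convergence of $b_{\varepsilon_n}$ and ${\rm div\,}b_{\varepsilon_n}$, combined with consistency of the resolvents in $r$.
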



\begin{proof}[Proof of Proposition \ref{prop4}]
\textit{Proof of {\rm(\textit{i}), (\textit{ii})}.}

\smallskip

1.~Set $0 \leq v \equiv v_\varepsilon:=(\mu+P^\varepsilon)^{-1}f$, $\mu>0$, $f \in L_+^r$. 

Multiplying the equation $(\mu+P^\varepsilon)v=f$ by $v^{r-1}$, integrating, and then arguing as in the proof of ($S_1$) in Section \ref{proof_thm2_sect}, we obtain, for $r \in ]r_c,\infty[$,
\begin{equation*}
\mu\|v\|_r^r + \frac{2}{r}\left(\frac{2}{r'}-\sqrt{\delta}\right)\|A^{\frac{1}{2}}v^{\frac{r}{2}}\|_2^2 \leq \|f\|_r \|v\|_r^{r-1},
\end{equation*}
where $\frac{2}{r'}-\sqrt{\delta}>0$ ($\Leftrightarrow r \in ]r_c,\infty[$), and so
\begin{equation}
\label{r_est}
\tag{$\ast$}
\mu^{r-1}\frac{2}{r}\left(\frac{2}{r'}-\sqrt{\delta}\right)\|A^{\frac{1}{2}}v^{\frac{r}{2}}\|_2^2 \leq\|f\|^r_r,
\end{equation}
\begin{equation}
\label{r_est2}
\tag{$\ast\ast$}
\|\mu(\mu+P^\varepsilon)^{-1}f\|_r \leq \|f\|_r.
\end{equation}
Since $(\mu+P^\varepsilon)^{-1}$ preserves positivity, \eqref{r_est2} is valid for all $f \in L^r$.

Now, let $0 \leq f \in L^1_{\sqrt{\varphi}}$. Multiplying $(\mu+P^\varepsilon)v=f$ by $\varphi$ and integrating, we have
$$
\mu\|v\|_{1,\sqrt{\varphi}} + \big\langle P^\varepsilon v, \varphi  \big\rangle = \|f\|_{1,\sqrt{\varphi}}.
$$
Using $(P^\varepsilon)^*\varphi \geq -cs^{-1}$
(see \eqref{P} in the proof of Proposition \ref{main_prop4}), we obtain
$$
\mu\|v\|_{1,\sqrt{\varphi}}-cs^{-1}\|v\|_1 \leq \|f\|_{1,\sqrt{\varphi}}.
$$
Since $\varphi \geq \frac{1}{2}$, $\|v\|_1 \leq 2 \|v\|_{1,\sqrt{\varphi}}$,
$$
\|v\|_{1,\sqrt{\varphi}} \leq (\mu-2cs^{-1})^{-1} \|f\|_{1,\sqrt{\varphi}},
$$
i.e.
\begin{equation*}
\|\varphi(\mu+P^\varepsilon)^{-1}\varphi^{-1} h\|_{1} \leq (\mu-2cs^{-1})^{-1}\|h\|_{1}, \quad h \in L^1_+.
\end{equation*}
Since $(\mu+P^\varepsilon)^{-1}$ preserves positivity, we have
\begin{equation}
\label{tri_ast}
\tag{$\ast\ast\ast$}
\|\varphi(\mu+P^\varepsilon)^{-1}\varphi^{-1} \|_{1 \rightarrow 1} \leq (\mu-2cs^{-1})^{-1}.
\end{equation}

\smallskip

2.~We will need

\begin{claim}
\label{claim_t1}
{\rm(1)} $\mu(\mu+P^\varepsilon)^{-1} \rightarrow 1$ strongly in $L^1_{\scriptscriptstyle \sqrt{\varphi}}$ as $\mu \uparrow \infty$ uniformly in $\varepsilon>0$.

{\rm(2)} $\mu(\mu+P^\varepsilon)^{-1} \rightarrow 1$ strongly in $L^r$ as $\mu \uparrow \infty$ uniformly in $\varepsilon>0$, for every $r \in ]r_c,\infty[$.
\end{claim}
\begin{proof}[Proof of Claim \ref{claim_t1}]
Proof of Claim \ref{claim_t1}(1). In view of \eqref{tri_ast}, it suffices to prove the convergence only for $f \in C_c^\infty$. We write
$$
(\mu+P^\varepsilon)^{-1}f=(\mu+A)^{-1}f-(\mu+P^\varepsilon)^{-1}(b_\varepsilon \cdot \nabla + U_\varepsilon)(\mu+A)^{-1}f.
$$
We have $\mu(\mu+A)^{-1}f \rightarrow f$ strongly in $L^1_{\scriptscriptstyle \sqrt{\varphi}}$ as $\mu \uparrow \infty$. Indeed,
\begin{align*}
\big\langle \varphi|\mu(\mu+A)^{-1}f-f|\big\rangle & \leq \big\langle \mathbf{1}_{B(0,s^{\frac{1}{\alpha}})}\varphi|\mu(\mu+A)^{-1}f-f|\big\rangle  \\
& + \big\langle \mathbf{1}_{B^c(0,s^{\frac{1}{\alpha}})}|\mu(\mu+A)^{-1}f-f|\big\rangle \rightarrow 0,
\end{align*}
where the first term tends to $0$ since $\mu(\mu+A)^{-1}f \rightarrow f$ strongly in $C_u$, while the 
second term tends to $0$ since $\mu(\mu+A)^{-1}f \rightarrow f$ strongly in $L^1$.

Thus, it remains to show that $\|\mu (\mu+P^\varepsilon)^{-1}(b_\varepsilon \cdot \nabla + U_\varepsilon)(\mu+A)^{-1}f\|_{1,\sqrt{\varphi}} \rightarrow 0$ as $\mu \uparrow \infty$ uniformly in $\varepsilon>0$.
First, we consider $I_{1,\varepsilon}:=\|\mu (\mu+P^\varepsilon)^{-1} b_\varepsilon \cdot \nabla(\mu+A)^{-1}f\|_{1,\sqrt{\varphi}}$\,:
\begin{align*}
I_{1,\varepsilon} & \leq \|(\mu+P^\varepsilon)^{-1}\mathbf{1}_{B(0,s^{\frac{1}{\alpha}})}|b_\varepsilon|\|_{1,\sqrt{\varphi}}\|\mu(\mu+A)^{-1}|\nabla f|\|_\infty \\
& + \|(\mu+P^\varepsilon)^{-1}\mathbf{1}_{B^c(0,s^{\frac{1}{\alpha}})}C_1 \mu(\mu+A)^{-1}|\nabla f|\|_{1,\sqrt{\varphi}} \qquad (C_1:=\kappa s^{-\frac{\alpha-1}{\alpha}}<\infty) \\
&=:J_1 + C_1 J_2.
\end{align*}
We have
\begin{align*}
J_1 &\leq \|\varphi(\mu+P^\varepsilon)^{-1}\varphi^{-1}\|_{1 \rightarrow 1} \|\varphi \mathbf{1}_{B(0,s^{\frac{1}{\alpha}})}|b|\|_1\|\nabla f\|_\infty \\
& (\text{we are applying \eqref{tri_ast}}) \\
&\leq  (\mu-2cs^{-1})^{-1} \|\varphi \mathbf{1}_{B(0,s^{\frac{1}{\alpha}})}|b|\|_1 \|\nabla f\|_\infty<\infty.
\end{align*}
(Indeed, $\varphi \mathbf{1}_{B(0,s^{\frac{1}{\alpha}})}|b| \in L^1$ since on $B(0,s^{\frac{1}{\alpha}})$ $\varphi |b|=\kappa|s^{-\frac{1}{\alpha}}x|^{-d+\beta}|x|^{-\alpha+1}$ and $\beta>\alpha$.)
\begin{align*}
J_2 &\leq \|\varphi (\mu+P^\varepsilon)^{-1} \varphi^{-1}\|_{1 \rightarrow 1} \|\varphi \mathbf{1}_{B^c(0,s^{\frac{1}{\alpha}})}\mu(\mu+A)^{-1}|\nabla f|\|_1 \\
& (\text{we are applying \eqref{tri_ast}}) \\
& \leq (\mu-2cs^{-1})^{-1} \|\varphi \mathbf{1}_{B^c(0,s^{\frac{1}{\alpha}})}\|_\infty \|\nabla f\|_1.
\end{align*}
Thus, $I_{1,\varepsilon} \rightarrow 0$ as $\mu \uparrow \infty$ uniformly in $\varepsilon>0$.

Similar argument shows that $I_{2,\varepsilon}:=\|\mu (\mu+P^\varepsilon)^{-1} U_\varepsilon (\mu+A)^{-1}f\|_{1,\sqrt{\varphi}} \rightarrow 0$ as $\mu \uparrow \infty$ uniformly in $\varepsilon>0$.
We only note that $\|\mathbf{1}_{B(0,s^{\frac{1}{\alpha}})}\varphi U_\varepsilon\|_1 \leq C<\infty$, $C \neq C(\varepsilon)$ due to
$$
\mathbf{1}_{B(0,s^{\frac{1}{\alpha}})}\varphi U_\varepsilon \leq \alpha \kappa s^{\frac{d-\beta}{\alpha}}|x|^{-d+\beta-\alpha}.
$$


The proof of Claim \ref{claim_t1}(1) is completed.

\smallskip

Proof of Claim \ref{claim_t1}(2). In view of \eqref{r_est2}, it suffices to prove the convergence in Claim \ref{claim_t1}(2) for every $f \in C_c^\infty$. The latter follows from $\varphi \geq \frac{1}{2}$ and Claim \ref{claim_t1}(1):
\begin{align*}
\|\mu(\mu+P^\varepsilon)^{-1}f-f\|_r^r & \leq \|\mu(\mu+P^\varepsilon)^{-1}f-f\|_1 \|\mu(\mu+P^\varepsilon)^{-1}f-f\|_\infty^{r-1}\\
& \leq 2\|\mu(\mu+P^\varepsilon)^{-1}f-f\|_{1,\sqrt{\varphi}}(2\|f\|_\infty)^{r-1} \rightarrow 0
\end{align*}
as $\mu \rightarrow \infty$ uniformly in $\varepsilon>0$.

The proof of Claim \ref{claim_t1} is completed.
\end{proof}

\begin{claim}
\label{claim_t2}
Then there exists a sequence $\varepsilon_k \downarrow 0$ such that the following is true:

{\rm(1)} $\|(\mu+P^{\varepsilon_n})^{-1}f-(\mu+P^{\varepsilon_k})^{-1}f\|_{1,\sqrt{\varphi}} \rightarrow 0$ as $n,k \rightarrow \infty$ for every $f \in L^1_{\scriptscriptstyle \sqrt{\varphi}}$, $\mu>cs^{-1}$

{\rm(2)} $\|(\mu+P^{\varepsilon_n})^{-1}f-(\mu+P^{\varepsilon_k})^{-1}f\|_r \rightarrow 0$ as $n,k \rightarrow \infty$ for every $f \in L^r$, $\mu>0$, $r \in ]r_c,\infty[$.

\end{claim}
\begin{proof}[Proof of Claim \ref{claim_t2}]

By \eqref{r_est2}, \eqref{tri_ast}, it suffices to prove (1) and (2) for every $f$ from a countable subset $F$ of $C_c^\infty$-functions  such that $F$ is dense in $L^1_{\scriptscriptstyle \sqrt{\varphi}}$ and $L^r$, respectively.

Without loss of generality, we prove (1), (2) for $0 \leq f \in F$. Set $0 \leq v_n:=(\mu+P^{\varepsilon_n})^{-1}f$, for some $\varepsilon_n \downarrow 0$.

\medskip

Proof of Claim \ref{claim_t2}(2). We emphasize that, according to Remark \ref{rem_conv_2}, it suffices to carry out the proof for some $r \in ]r_c,\infty[$. 

First, we establish the inequality
\begin{equation}
\label{vub}
v_n(x) \leq C_f |x|^{-d-\alpha} \quad \text{ $\forall\,x \in B^c(0,R)$}
\end{equation}
which holds all $R$ sufficiently large,
with constant $C_f$ depending on $\|f\|_\infty$ and its compact support. Indeed,
let $\sprt f \subset B(0,R_f)$, $R \geq 2R_f \wedge 1$.
Using the upper bound $e^{-tP^\varepsilon}(x,y) \leq Ce^{-t(-\Delta)^{\frac{\alpha}{2}}}\varphi_t(y)$, $C \neq C(\varepsilon)$, and recalling that $e^{-t(-\Delta)^\frac{\alpha}{2}}(x,y) \approx t^{-\frac{d}{\alpha}}\wedge\frac{t}{|x-y|^{d+\alpha}}$ and $\varphi_t(y) \approx t^{\frac{d-\beta}{\alpha}}|y|^{-d+\beta} \vee 1$, we have
($x \in B^c(0,R)$, $y \in B(0,R_f)$)
\begin{align*}
(\mu+P^{\varepsilon})^{-1}(x,y) & \leq C_1 \int_0^\infty e^{-\mu t} \big(t^{-\frac{d}{\alpha}}\wedge\frac{t}{|x-y|^{d+\alpha}} \big) \big(t^{\frac{d-\beta}{\alpha}}|y|^{-d+\beta} \vee 1 \big)dt \\
& \leq C_2(R_f) \biggl[\int_{0}^{|y|^\alpha} e^{-\mu t} t|x|^{-d-\alpha}dt + \int_{|y|^\alpha}^{|x|^\alpha} e^{-\mu t} t|x|^{-d-\alpha}t^{\frac{d-\beta}{\alpha}}|y|^{-d+\beta} dt \\
& + \int_{|x|^\alpha}^\infty e^{-\mu t} t^{-\frac{d}{\alpha}}t^{\frac{d-\beta}{\alpha}}|y|^{-d+\beta} dt \bigg] \\
& \leq C_3(\mu,R_f) |x|^{-d-\alpha}\bigg[|y|^{2\alpha} + e^{-\frac{\mu}{2}|x|^\alpha}|y|^{-d+\beta} + |y|^{-d+\beta}\bigg],
\end{align*}
where we have used inequalities $e^{-\mu t} \leq 1$ in the first integral, $e^{-\mu t} t^{1+\frac{d-\beta}{\alpha}} \leq c_1(\mu) e^{-\frac{\mu}{2}t}$ in the second integral, and $e^{-\mu t} t^{-\frac{\beta}{\alpha}} \leq c_2(\mu) t^{-\frac{d-\alpha}{\alpha}-1}$ (for $t \geq |x|^\alpha \geq 1$) in the third integral. 
Therefore, for every $x \in B^c(0,R)$,
$
v_n(x)=\langle (\mu+P^{\varepsilon_n})^{-1}(x,\cdot)\mathbf{1}_{B(0,R_f)}(\cdot)f(\cdot)\rangle$ satisfies \eqref{vub} with constant $C_f:=C_3(\mu,R_f) \langle (|\cdot|^{2\alpha} + 2|\cdot|^{-d+\beta})\mathbf{1}_{B(0,R_f)}\rangle \|f\|_\infty <\infty
$ (recall $\beta \in ]\alpha,d[$).

%

By \eqref{vub},
\begin{equation*}
\|\mathbf{1}_{B^c(0,R)}v_n\|_r \downarrow 0 \quad \text{ as } R \uparrow \infty \text{ uniformly in $n$},
\end{equation*}
and so, for a given $\gamma>0$, there exists $R=R_\gamma$ such that
\begin{equation}
\label{c_diff}
\tag{$\bullet$}
\|\mathbf{1}_{B^c(0,R)}(v_n-v_k)\|_r < \gamma \quad \text{ for all } n,k,
\end{equation}
i.e.\,we can control $\{v_n-v_k\}$ outside of the finite ball $B(0,R)$.

 By \eqref{r_est},\eqref{r_est2}
\begin{equation*}
\|v_n^\frac{r}{2}\|_2^2+\|A^{\frac{1}{2}}v_n^{\frac{r}{2}}\|_2^2 \leq c\|f\|_r^r, \quad \text{ for all } n, \quad c \neq c(n).
\end{equation*}
Therefore, by the fractional Rellich-Kondrashov Theorem, there are a subsequence $\{v_{n_l}\}$ and a function $v \geq 0$ such that $v^{r/2} \in L^2(B(0,R))$,
\begin{align*}
 \|\mathbf{1}_{B(0,R)}v^{r/2}_{n_l}\|_2 &\rightarrow \|\mathbf{1}_{B(0,R)}v^{r/2}\|_2, \\
v_{n_l} &\rightarrow v \quad \text{a.e. on $B(0,R)$}.
\end{align*}
The latter yields that $v_{n_l} \rightarrow v$ strongly in $L^r(B(0,R))$. Thus,
\begin{equation}
\label{c_diff2}
\tag{$\bullet\bullet$}
\{\mathbf{1}_{B(0,R)}v_{n_l}\} \text{ is a Cauchy sequence in $L^r$.}
\end{equation}

Now, we consider \eqref{c_diff} with $\gamma=\gamma_m \downarrow 0$. Combining  it with \eqref{c_diff2} we  obtain a subsequence of $\{v_{n}\}$ (which we again denote by $\{v_{n_m}\}$) that constitutes a Cauchy sequence in $L^r$. 

A priori, the choice of $\{v_{n_m}=(\mu+P^{\varepsilon_{n_m}})^{-1}f\}$ depends on $0 \leq f \in F$. Since $F$ is countable, we can use the standard diagonal argument to pass to a subsequence $\{\varepsilon_{n_{m_k}}\}$ such that $\{(\mu+P^{\varepsilon_{n_{m_k}}})^{-1}f\}$ is a Cauchy sequence for every $0 \leq f \in F$. Thus, re-denoting $\{\varepsilon_{n_{m_k}}\}$ as $\{\varepsilon_k\}$, we arrive at Claim \ref{claim_t2}(2).

\smallskip

Proof of Claim \ref{claim_t2}(1). Let us show that $\{v_n\}$ is a Cauchy sequence in $L^1_{\scriptscriptstyle \sqrt{\varphi}}$.
We have
$$
\|v_n-v_k\|_{1,\sqrt{\varphi}} \leq \|\mathbf{1}_{B^c(0,R)}(v_n-v_k)\|_1 + \|\mathbf{1}_{B(0,R)}\varphi(v_n-v_k)\|_1.
$$
The first term in the RHS can be made arbitrarily small, by choosing $R$ sufficiently large, in view of \eqref{vub}.
The second term in the RHS converges to $0$ since $\mathbf{1}_{B(0,R)}\varphi \in L^{1+\sigma}$ for a small $\sigma>0$ such that $r:=\frac{1+\sigma}{\sigma}>r_c$, while $v_n-v_k \rightarrow 0$ strongly in $L^{r}(B(0,R))$ according to  Claim \ref{claim_t2}(2).

The proof of Claim \ref{claim_t2} is completed.
\end{proof}

By \eqref{r_est2}, \eqref{tri_ast}, the $C_0$ semigroups $e^{-tP^\varepsilon}$ are contractions in $L^r$ and quasi contractions in $L^1_{\scriptscriptstyle \sqrt{\varphi}}$. Thus, Claim \ref{claim_t1} and Claim \ref{claim_t2} verify the hypothesis of the Trotter Approximation Theorem, and so Proposition \ref{prop4}(\textit{i}), (\textit{ii}) is proved.

\smallskip

\textit{Proof of {\rm(\textit{iii})}.} Set $v_n:=(\mu+P^{\varepsilon_n})^{-1}f$ and $v=(\mu+\Lambda_r(b))^{-1}f$, $f \in L^r$. We have
$$
\langle f,h\rangle=\mu\langle v_n,h \rangle + \langle v_n, (-\Delta)^{\frac{\alpha}{2}}h\rangle - \langle v_n,b_{\varepsilon_n}\cdot \nabla h\rangle - \langle v_n, ({\rm div\,}b_{\varepsilon_n})h\rangle, \qquad h \in C_c^\infty.
$$
Since for $\sigma>0$ sufficiently small, $b_{\varepsilon_n} \rightarrow b$, ${\rm div\,}b_{\varepsilon_n} \rightarrow {\rm div\,}b$ in $L^{1+\sigma}_{\loc}$, while  $v_n \rightarrow v$ strongly in $L^r$ for $r:=\frac{1+\sigma}{\sigma}$, we have
$$
\langle f,h\rangle=\mu\langle v,h \rangle + \langle v, (-\Delta)^{\frac{\alpha}{2}}h\rangle - \langle v,b\cdot \nabla h\rangle - \langle v, ({\rm div\,}b)h\rangle.
$$

The proof of Proposition \ref{prop4} is completed. 
\end{proof}

\bigskip

\section{Operator realization of $(-\Delta)^{\frac{\alpha}{2}} -  \nabla \cdot b$ in $L^{r'}$, $r' \in ]1,r'_c[$}

\label{appA_}

We are using notation introduced in the beginning of Section \ref{rellich_app}.

\begin{proposition}
\label{prop5}
Let $0< \delta<4$.  There exists an operator realization $\Lambda^*_{r'}(b)$ of $(-\Delta)^{\frac{\alpha}{2}} -  \nabla \cdot b$ in $L^{r'}$, $r' \in ]1,r'_c[$, $r_c=\frac{2}{2-\sqrt{\delta}}$, as the (minus) generator of a contraction $C_0$ semigroup
$$
e^{-t\Lambda^*_{r'}(b)}=s\mbox{-}L^{r'}\mbox{-}\lim_{n} e^{-t(P^{\varepsilon_n})^*} \quad (\text{loc.\,uniformly in $t \geq 0$}), \quad r' \in ]1,r'_c[,
$$
for a sequence $\{\varepsilon_n\} \downarrow 0$.

We have
$$
\langle e^{-t\Lambda_{r}(b)}f,g\rangle=\langle f,e^{-t\Lambda^*_{r'}(b)}g\rangle, \quad t>0, \quad f \in L^{r}, \quad g \in L^{r'},
$$
where $\Lambda_{r}(b)$ has been constructed in Proposition \ref{prop4}.
\end{proposition}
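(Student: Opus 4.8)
The plan is to mirror the construction of Proposition~\ref{prop4}, now with the formal adjoints $(P^\varepsilon)^* = (-\Delta)^{\frac{\alpha}{2}} - b_\varepsilon\cdot\nabla - W_\varepsilon$ viewed as $C_0$ semigroups in $L^{r'}$, $r'\in\,]1,r'_c[$, in place of $P^\varepsilon$ in $L^r$ and $L^1_{\scriptscriptstyle \sqrt{\varphi}}$. First I would record the dual a priori bounds. Multiplying $(\partial_t + (P^\varepsilon)^*)u = 0$ by $u^{r'-1}$, integrating, integrating $\langle b_\varepsilon\cdot\nabla u, u^{r'-1}\rangle=\frac{2}{r'}\langle b_\varepsilon\cdot\nabla u^{r'/2},u^{r'/2}\rangle$ by parts (noting $\mbox{div}\,b_\varepsilon = W_\varepsilon + U_\varepsilon$), using $\langle Au,u^{r'-1}\rangle\geq \frac{4}{r'r}\|A^{\frac12}u^{\frac{r'}{2}}\|_2^2$ and the fractional Hardy--Rellich inequality \eqref{hardy_ineq} together with $(d-\alpha)\kappa c_{d,\alpha}^{-2}=2\sqrt{\delta}$, one obtains (discarding the nonnegative $U_\varepsilon$-contribution)
$$
-\frac{d}{dt}\|u\|_{r'}^{r'} \geq \frac{2r'}{r}\Big(\frac{2}{r'}-\sqrt{\delta}\Big)\|A^{\frac12}u^{\frac{r'}{2}}\|_2^2 ,
$$
where $\frac{2}{r'}-\sqrt{\delta}>0$ is precisely the condition $r'<r'_c$. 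Hence $e^{-t(P^\varepsilon)^*}$, and the resolvents $\mu(\mu+(P^\varepsilon)^*)^{-1}$ ($\mu>0$), are $L^{r'}$-contractions --- this also follows from the $L^r$-contractivity of $e^{-tP^\varepsilon}$ via the adjoint relations in Remark~\ref{rem_hille} --- and for $v=(\mu+(P^\varepsilon)^*)^{-1}f$, $f\in L^{r'}_+$, one gets $\mu^{r'-1}\|A^{\frac12}v^{\frac{r'}{2}}\|_2^2\leq C\|f\|_{r'}^{r'}$ with $C$ independent of $\varepsilon,\mu$, the analogue of the energy estimate \eqref{r_est} in the proof of Proposition~\ref{prop4}.

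Next I would reproduce the three ingredients feeding the Trotter Approximation Theorem. \textbf{(a) Decay at infinity.} For $g\in C_c^\infty$ and $v_n^*:=(\mu+(P^{\varepsilon_n})^*)^{-1}g$, the bound $e^{-t(P^\varepsilon)^*}(x,y)\leq Ce^{-tA}(x,y)\varphi_t(x)$ --- the dual form of \eqref{apr_ub} established in Section~\ref{ub_sect} --- gives, upon integration against $e^{-\mu t}$ and using that $\varphi_t(x)\approx 1$ for $|x|$ large while the range $t\geq |x|^\alpha$ contributes a superexponentially small term, the estimate $v_n^*(x)\leq C_g|x|^{-d-\alpha}$ for all $|x|$ large, uniformly in $n$; this controls $\{v_n^*\}$ outside a fixed ball in $L^{r'}$, just as \eqref{vub} does in Proposition~\ref{prop4}. \textbf{(b) Compactness on balls.} The energy bound above, the fractional Rellich--Kondrashov theorem, and a diagonal argument over a countable dense subset of $C_c^\infty$ produce a single sequence $\varepsilon_n\downarrow 0$ along which $(\mu+(P^{\varepsilon_n})^*)^{-1}g$ is Cauchy in $L^{r'_0}$ for every $g$ in that subset and some fixed $r'_0\in\,]1,r'_c[$, hence, by contractivity and Remark~\ref{rem_conv_2}, in $L^{r'}$ for every $g\in L^{r'}$ and every $r'\in\,]1,r'_c[$; this is the analogue of Claim~\ref{claim_t2}. \textbf{(c) $\mu\to\infty$.} Using the resolvent identity $(\mu+(P^\varepsilon)^*)^{-1}=(\mu+A)^{-1}+(\mu+(P^\varepsilon)^*)^{-1}(b_\varepsilon\cdot\nabla+W_\varepsilon)(\mu+A)^{-1}$, the fact that $\mu(\mu+A)^{-1}g\to g$ in $L^{r'}$, and the $L^{r'}$-contractivity of $(\mu+(P^\varepsilon)^*)^{-1}$, one shows $\mu(\mu+(P^\varepsilon)^*)^{-1}\to 1$ strongly in $L^{r'}$ as $\mu\uparrow\infty$, uniformly in $\varepsilon$: the error term is split at $\partial B(0,1)$, the $B^c(0,1)$-part handled via $\|\nabla(\mu+A)^{-1}\|_{L^{r'}\to L^{r'}}\to 0$, the $B(0,1)$-part via the $L^\infty$-smallness in $\mu$ of $(\mu+A)^{-1}g$ and $\nabla(\mu+A)^{-1}g$ combined with the fractional Hardy--Rellich inequality. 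With (a)--(c) the Trotter Approximation Theorem yields the $C_0$ contraction semigroup $e^{-t\Lambda^*_{r'}(b)}:=s\mbox{-}L^{r'}\mbox{-}\lim_n e^{-t(P^{\varepsilon_n})^*}$, locally uniformly in $t$, for every $r'\in\,]1,r'_c[$.

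Finally, arranging $\{\varepsilon_n\}$ to be a diagonal refinement of the sequence from Proposition~\ref{prop4} --- so that $e^{-tP^{\varepsilon_n}}\to e^{-t\Lambda_r(b)}$ in $L^r$ still holds along it --- the duality is obtained by passing to the limit in $\langle e^{-tP^{\varepsilon_n}}f,g\rangle=\langle f,e^{-t(P^{\varepsilon_n})^*}g\rangle$ (Remark~\ref{rem_hille}): for $f\in L^r$, $g\in L^{r'}$ the left side tends to $\langle e^{-t\Lambda_r(b)}f,g\rangle$ and the right side to $\langle f,e^{-t\Lambda^*_{r'}(b)}g\rangle$. I expect ingredient (c) to be the main obstacle: unlike $P^\varepsilon$, the adjoint carries no natural desingularizing weight, so the local singularities of $b_\varepsilon$ and $W_\varepsilon$ must be tamed directly in $L^{r'}$, and keeping the estimates uniform in $\varepsilon$ over the full range $r'\in\,]1,r'_c[$ (where $r'_c$ is large when $\delta$ is small) is the delicate point --- it may be convenient to interpolate with the $L^2$-construction of Section~\ref{rellich_app} in the regime $\delta<1$.
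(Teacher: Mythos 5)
Your plan reproduces the paper's own proof in all essential steps: the dual energy/contraction estimates obtained by multiplying by $u^{r'-1}$, integrating by parts with ${\rm div}\, b_\varepsilon=W_\varepsilon+U_\varepsilon$ and applying the Hardy--Rellich inequality; the decay bound $v_n^*(x)\le C|x|^{-d-\alpha}$ at infinity via the dual upper bound $e^{-t(P^\varepsilon)^*}(x,y)\le Ce^{-tA}(x,y)\varphi_t(x)$ (this is exactly \eqref{vub2}); fractional Rellich--Kondrashov on balls plus a diagonal argument (Claim \ref{claim4}); the Trotter approximation theorem; and the duality identity obtained by passing to the limit in $\langle e^{-tP^{\varepsilon_n}}f,g\rangle=\langle f,e^{-t(P^{\varepsilon_n})^*}g\rangle$ along a common subsequence, which is indeed how the paper's terse final sentence is meant to be read.

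The one place where your version, as written, would not go through for the full range of exponents is your ingredient (c), the uniform-in-$\varepsilon$ convergence $\mu(\mu+(P^\varepsilon)^*)^{-1}\to 1$. Handling the $B(0,1)$-part directly in $L^{r'}$ requires $\mathbf{1}_{B(0,1)}|b|\in L^{r'}$ and $\mathbf{1}_{B(0,1)}W_\varepsilon\in L^{r'}$ (uniformly in $\varepsilon$), i.e. $(\alpha-1)r'<d$ and $\alpha r'<d$; since $r_c'\uparrow\infty$ as $\delta\downarrow 0$, these fail for $r'$ large, so the direct argument cannot cover all $r'\in\,]1,r_c'[$. The paper's resolution (Claim \ref{claim3}) is simpler than the fallback you suggest: prove the convergence at $r'=1$, where $\mathbf{1}_{B(0,1)}|b|$ and $\mathbf{1}_{B(0,1)}W_\varepsilon$ are always integrable and only the $L^1$-contractivity of the adjoint resolvent is needed, and then pass to any $r'\in\,]1,r_c'[$ by interpolating the $L^1$-convergence against the uniform $L^{r_0'}$-contraction bound for some $r'<r_0'<r_c'$ (H\"{o}lder on $\|g\|_{r'}\le\|g\|_1^{\theta}\|g\|_{r_0'}^{1-\theta}$ applied to $g=\mu(\mu+(P^\varepsilon)^*)^{-1}f-f$, $f\in C_c^\infty$). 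Your proposed interpolation with the $L^2$-theory of Section \ref{rellich_app} is not the right patch: that construction exists only for $\delta<1$, and precisely in that regime $r_c'>2$, so interpolating between $L^1$ and $L^2$ would still miss $r'\in\,]2,r_c'[$; the contraction estimate you have already derived makes the $L^2$ theory unnecessary. With that one step replaced, your argument coincides with the paper's.
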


\begin{proof}[Proof of Proposition \ref{prop5}]
Let $0 \leq f \in C_c^\infty$, set $0 \leq v:=(\mu+(P^\varepsilon)^*)^{-1}f$. We multiply the equation $(\mu+(-\Delta)^{\frac{\alpha}{2}}-\nabla \cdot b_\varepsilon + U_\varepsilon)v=f$ by $v^{r'-1}$, integrate, and then argue as in the proof of ($S_1$) in Section \ref{proof_thm2_sect}, to obtain
$$
\mu\|v\|_{r'}^{r'} + \frac{4}{rr'}\|A^{\frac{1}{2}}v^{\frac{r'}{2}}\|_2^2 + \langle (-\nabla \cdot b_\varepsilon + U_\varepsilon)v,v^{r'-1}\rangle \leq \|f\|_{r'}\|v\|_{r'}^{r'-1}.
$$
We write $-\nabla \cdot b_\varepsilon v=-({\rm div\,}b_{\varepsilon})v - b_\varepsilon \cdot \nabla v$ and 
use that $-{\rm div\,}b_{\varepsilon}=-\kappa(d-\alpha)|x|_\varepsilon^{-\alpha} - U_\varepsilon$
to obtain
$$
\mu\|v\|_{r'}^{r'} + \frac{4}{rr'}\|A^{\frac{1}{2}}v^{\frac{r'}{2}}\|_2^2 - \kappa(d-\alpha) \langle |x|_\varepsilon^{-\alpha}v^{\frac{r'}{2}},v^{\frac{r'}{2}} \rangle - \langle b_\varepsilon \cdot \nabla v,v^{r'-1}\rangle  \leq \|f\|_{r'}\|v\|_{r'}^{r'-1}.
$$
By  integration by parts, 
\begin{align*}
&-\langle b_\varepsilon \cdot \nabla v,v^{r'-1}\rangle \equiv -\frac{2}{r'}\kappa\langle |x|_\varepsilon^{-\alpha}x \cdot \nabla v^{\frac{r'}{2}},v^{\frac{r'}{2}}\rangle \\
& =\frac{2}{r'}\biggl(\kappa \frac{d-\alpha}{2} \langle |x|_\varepsilon^{-\alpha}v^{\frac{r'}{2}},v^{\frac{r'}{2}}\rangle+\frac{1}{2}\langle U_\varepsilon v^{\frac{r'}{2}},v^{\frac{r'}{2}}\rangle \biggr).
\end{align*}
Thus,
$$
\mu\|v\|_{r'}^{r'} + \frac{4}{rr'}\|A^{\frac{1}{2}}v^{\frac{r'}{2}}\|_2^2 - \kappa(d-\alpha)\frac{1}{r} \langle |x|_\varepsilon^{-\alpha}v^{\frac{r'}{2}},v^{\frac{r'}{2}} \rangle \leq \|f\|_{r'}\|v\|_{r'}^{r'-1}.
$$
so by the Hardy-Rellich inequality \eqref{hardy_ineq},
$$
\mu\|v\|_{r'}^{r'} + \frac{2}{r}\left( \frac{2}{r'}-\sqrt{\delta}\right)\|A^{\frac{1}{2}}v^{\frac{r'}{2}}\|_2^2  \leq \|f\|_{r'}\|v\|_{r'}^{r'-1},
$$
where $\frac{2}{r'}-\sqrt{\delta}>0$. 
Thus, we have proved
\begin{equation}
\label{mu_est2}
\tag{$\ast$}
\mu^{r'-1}\frac{2}{r}\left( \frac{2}{r'}-\sqrt{\delta}\right)\|A^{\frac{1}{2}}v^{\frac{r'}{2}}\|_2^2   \leq \|f\|_{r'}, \qquad r' \in ]1,r_c'[, 
\end{equation}
\begin{equation}
\label{mu_est}
\tag{$\ast\ast$}
\|\mu(\mu+(P^\varepsilon)^*)^{-1}\|_{r' \rightarrow r'}  \leq 1, \qquad r' \in ]1,r_c'[, 
\end{equation}
\begin{equation}
\label{mu_est3}
\tag{$\ast\ast\ast$}
\|\mu(\mu+(P^\varepsilon)^*)^{-1}\|_{1 \rightarrow 1}  \leq 1.
\end{equation}
(\eqref{mu_est3} evidently follows from \eqref{mu_est} by taking $r' \downarrow 1$.)

\begin{claim}
\label{claim3}
$\mu(\mu+(P^\varepsilon)^*)^{-1} \rightarrow 1$ strongly in $L^{r'}$ as $\mu \rightarrow \infty$ uniformly in $\varepsilon$, for every $r' \in [1,r_c'[$.
\end{claim}
\begin{proof}[Proof of Claim \ref{claim3}]
1.~First, we consider the case $r'=1$. By \eqref{mu_est3}, it suffices to prove the convergence only on $f \in C_c^\infty$. We write
\begin{align*}
\mu(\mu+(P^\varepsilon)^*)^{-1}f =\mu(\mu+A)^{-1}f &- \mu(\mu+(P^\varepsilon)^*)^{-1}(-\nabla \cdot b_\varepsilon + U_\varepsilon)(\mu+A)^{-1}f \\
=\mu(\mu+A)^{-1}f & + \mu(\mu+(P^\varepsilon)^*)^{-1}b_\varepsilon \cdot \nabla(\mu+A)^{-1}f \\
& + \mu(\mu+(P^\varepsilon)^*)^{-1}({\rm div\,}b_\varepsilon)(\mu+A)^{-1}f \\
& + \mu(\mu+(P^\varepsilon)^*)^{-1}(-U_\varepsilon)(\mu+A)^{-1}f\\
=:\mu(\mu+A)^{-1}f&+K_1+K_2+K_3.
\end{align*}
Since $\mu(\mu+A)^{-1}f \rightarrow f$ in $L^1$ as $\mu \rightarrow \infty$, we have to show that $K_i \downarrow 0$, $i=1,2,3$, uniformly in $\varepsilon>0$.
Indeed, we have
\begin{align*}
\|K_1\|_1 & \leq \langle(\mu+(P^\varepsilon)^*)^{-1}\mathbf{1}_{B(0,1)}|b|\mu(\mu+A)^{-1}|\nabla f|\rangle \\
& +\langle\mu(\mu+(P^\varepsilon)^*)^{-1}C_1(\mu+A)^{-1}|\nabla f|\rangle, \qquad C_1:=\|\mathbf{1}_{B^c(0,1)}|b|\|_\infty \\
& \leq \|\mathbf{1}_{B(0,1)}|b|\|_1 \mu^{-1} \|\nabla f\|_\infty + C_1\mu^{-1}\|\nabla f\|_1.
\end{align*}
The terms $\|K_2\|_1$ and $\|K_3\|_1$ are estimated similarly (we only note that  $\|\mathbf{1}_{B(0,1)}U_\varepsilon\|_1 \leq C$, $C \neq C(\varepsilon)$, cf.\,the proof of Claim \ref{claim_t1}). Thus, $K_i \downarrow 0$, $i=1,2,3$, uniformly in $\varepsilon>0$, as needed. 

(Clearly, the proof above holds also for any $r'<\frac{d}{d-\beta}\wedge r_c'$.)

The convergence in the general case $r' \in [1,r_c'[$ follows by \eqref{mu_est}, the convergence in $L^1$ and H\"{o}lder's inequality.

The proof of Claim \ref{claim3} is completed.
\end{proof}

\begin{claim}
\label{claim4}
Fix $\mu>0$, $r' \in ]1,r_c'[$. Then there exists a sequence $\varepsilon_m \downarrow 0$ such that
$$
\|(\mu+(P^{\varepsilon_m})^*)^{-1}f-(\mu+(P^{\varepsilon_k})^*)^{-1}f\|_{r'} \rightarrow 0 \quad \text{ as $m,k \rightarrow \infty$}
$$ 
for every $f \in L^{r'}$.
\end{claim}

\begin{proof}[Proof of Claim \ref{claim4}]
We argue as in the proof of Claim \ref{claim_t2}. According to Remark \ref{rem_conv_2}, it suffices to carry out the proof for some $r' \in ]1,r_c'[$. 

We fix a countable subset $F$ of $C_c^\infty$ such that $F \cap \{f \geq 0\}$ is dense in $L^{r'} \cap \{f \geq 0\}$. It suffices to prove the convergence on  $0 \leq f \in F$. 

Set $0 \leq v_n:=(\mu+(P^{\varepsilon_n})^*)^{-1}f$ for some $\{\varepsilon_n\} \downarrow 0$.
Let us show that
\begin{equation}
\label{vub2}
v_n(x) \leq C_f |x|^{-d-\alpha} \quad \text{  $\forall\,x \in B^c(0,R)$}
\end{equation}
for all $R$ sufficiently large,
with constant $C_f=C(\|f\|_1, \sprt \,f)$. Indeed,
let $\sprt f \subset B(0,R_f)$, $R \geq 2R_f \wedge 1$.
Using the upper bound $e^{-t(P^\varepsilon)^*}(x,y) \leq Ce^{-t(-\Delta)^{\frac{\alpha}{2}}}(x,y)\varphi_t(x)$, $C \neq C(\varepsilon)$, and $e^{-t(-\Delta)^\frac{\alpha}{2}}(x,y) \approx t^{-\frac{d}{\alpha}}\wedge\frac{t}{|x-y|^{d+\alpha}}$, $\varphi_t(y) \approx t^{\frac{d-\beta}{\alpha}}|y|^{-d+\beta} \vee 1$, we have
($x \in B^c(0,R)$, $y \in B(0,R_f)$)
\begin{align*}
(\mu+(P^{\varepsilon})^*)^{-1}(x,y) & \leq C_1 \int_0^\infty e^{-\mu t} \bigl( t^{-\frac{d}{\alpha}}\wedge\frac{t}{|x-y|^{d+\alpha}}\bigr) \bigl(t^{\frac{d-\beta}{\alpha}}|x|^{-d+\beta} \vee 1\bigr)dt \\
& \leq C_2(R_f) \int_0^\infty e^{-\mu t} \bigl( t^{-\frac{d}{\alpha}}\wedge\frac{t}{|x|^{d+\alpha}}\bigr) \bigl(t^{\frac{d-\beta}{\alpha}}|x|^{-d+\beta} \vee 1\bigr)dt \\
& = C_2(R_f)\biggl[\int_0^{|x|^\alpha} e^{-\mu t}t|x|^{-d-\alpha}dt + \int_{|x|^\alpha}^\infty e^{-\mu t}t^{-\frac{d}{\alpha}}t^{\frac{d-\beta}{\alpha}}|x|^{-d+\beta} dt \biggr] \\
& \leq C_3(\mu,R_f) \biggl[|x|^{-d-\alpha}e^{-\frac{\mu}{2}|x|^\alpha} + |x|^{-d+\beta}|x|^{-\beta - \alpha}\biggr],
\end{align*}
where in the first integral we used $e^{-\mu t}t \leq c_1(\mu)e^{-\frac{\mu}{2}t}$, in the second integral $e^{-\mu t}t^{-\frac{\beta}{\alpha}} \leq c_2(\mu) t^{-\frac{\beta+\alpha}{\alpha}-1}$ for $t \geq |x|^\alpha \geq 1$. Thus, for all $x \in B^c(0,R)$,
$
v_n(x)=\langle (\mu+(P^{\varepsilon_n})^*)^{-1}(x,\cdot)f(\cdot)\rangle$ satisfies \eqref{vub2} with constant $C_f:=C_3(\mu,R_f) \|f\|_1 <\infty.
$

Using \eqref{vub2}, we obtain that, 
for a given $\gamma>0$, there exists $R=R_\gamma$ such that
\begin{equation}
\label{c_diff_}
\tag{$\bullet$}
\|\mathbf{1}_{B^c(0,R)}(v_n-v_k)\|_{r'} < \gamma \quad \text{ for all } n,k.
\end{equation}

By \eqref{mu_est2}, since $r'>1$, we appeal to the fractional Rellich-Kondrashov Theorem as in the proof of Claim \ref{claim_t2}  obtaining a subsequence $\{v_{n_l}\}$ such that
\begin{equation}
\label{c_diff2_}
\tag{$\bullet\bullet$}
\{\mathbf{1}_{B(0,R)}v_{n_l}\} \text{ is a Cauchy sequence in $L^{r'}$.}
\end{equation}

Combining \eqref{c_diff_} and \eqref{c_diff2_} as in the proof of Claim \ref{claim_t2}, we obtain a subsequence $\{\varepsilon_{n_k}\}$ of $\{\varepsilon_n\}$ such that $\{(\mu+(P^{\varepsilon_{n_k}})^*)^{-1}f\}$ constitutes a Cauchy sequence in $L^{r'}$ for every $0 \leq f \in F$. Re-denoting ${\varepsilon_{n_k}}$ by $\{\varepsilon_k\}$, we complete the proof of Claim \ref{claim4}.
\end{proof}

By \eqref{mu_est}, $e^{-t(P^\varepsilon)^*}$ is a contraction semigroup in $L^{r'}$. Claims \ref{claim3} and \ref{claim4} now verify the conditions of the Trotter Approximation Theorem, which thus yields the first assertion of Proposition \ref{prop5}. The second assertion follows from the first one and Proposition \ref{prop4}.

The proof of Proposition \ref{prop5} is completed.
\end{proof}

\appendix

\bigskip

\section{Heat kernel of the fractional Schr\"{o}dinger operator}
\label{app_MS}

The arguments from the previous sections can be specified to the operator
$$
(-\Delta)^{\frac{\alpha}{2}} - V, \quad
V(x)=\delta c^{2}_{d,\alpha}|x|^{-\alpha},\quad 0<\delta \leq 1, \quad 0<\alpha<2,$$
where, recall,
$$c_{d,\alpha}=\frac{\gamma(\frac{d}{2})}{\gamma(\frac{d}{2}-\frac{\alpha}{2})}, \quad \gamma(s):=\frac{2^s\pi^\frac{d}{2}\Gamma(\frac{s}{2})}{\Gamma(\frac{d}{2}-\frac{s}{2})},$$
to yield sharp two-sided weighted bounds on its fundamental solution.
These bounds are known, see \cite{BGJP}, where the authors use a different technique.

We will need the symmetric variant of the desingularization method developed in \cite{MS}.
Let $X$ be a locally compact topological space and $\mu$ a measure on $X$. Let $H$ be a non-negative selfadjoint operator in the (complex) Hilbert space $L^2=L^2(X,\mu)$ with the inner product $\langle f,g\rangle =\int_X f\bar{g} d \mu$. We assume that $H$ possesses the Sobolev embedding property:

There are constants $j>1$ and $c_S>0$ such that, for all $f\in D(H^\frac{1}{2})$,
\[
c_S \|f \|^2_{2j} \leq \|H^\frac{1}{2}f\|_2^2 \tag{$M_1$},
\]
there exists a family of real valued weights $\varphi=\{\varphi_s\}_{s>0}$ on $X$ such that, for all $s>0$,
\[
\varphi_s, \; 1/\varphi_s \in L^2_\loc(X,\mu) \tag{$M_2$}
\] 
and there exists a constant $c_1$ independent of $s$ such that, for all $0<t\leq s$,
\[
\|\varphi_s e^{-tH}\varphi_s^{-1} f\|_1 \leq c_1 \|f\|_1, \;\; f\in \mathcal
\varphi_s L^\infty_{com} (X,\mu). \tag{$M_3$}
\]

\begin{theorem}[{\cite[Theorem A]{MS}}]
\label{weightM1}

In addition to $(M_1)-(M_3)$ assume that
\[
\inf_{s>0, x\in X} |\varphi_s(x)| \geq c_0 >0. \tag{$M_4$}
\]
Then $e^{-tA}, t>0$ are integral operators, and there is a constant $C=C(j,c_s,c_1,c_0)$ such that, for all $t>0$ and $\mu$ a.e. $x,y \in X$,
\[
|e^{-tH}(x,y)|\leq Ct^{-j^\prime} |\varphi_t(x) \varphi_t(y)|, \;\; j^\prime=j/(j-1). 
\]
\end{theorem}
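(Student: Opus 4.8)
\textbf{Proof proposal for Theorem \ref{weightM1} (the symmetric desingularization theorem).}

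The plan is to reduce this to a weighted Nash-type argument entirely parallel to the proof of Theorem A given above, but exploiting selfadjointness to halve the work. First I would record the elementary consequence of $(M_3)$ and $(M_4)$: since $e^{-tH}$ is positivity-preserving-free here (we do not assume this), we instead argue directly. For $h\in L^\infty_{\mathrm{com}}$, write $h=\varphi_s^{-1}(\varphi_s h)$ and apply $(M_3)$ together with $\inf\varphi_s\ge c_0$ to obtain
\[
\|e^{-tH}h\|_1 \le c_0^{-1}\,\|\varphi_s e^{-tH}\varphi_s^{-1}(\varphi_s h)\|_1 \le c_0^{-1}c_1\,\|\varphi_s h\|_1 = c_0^{-1}c_1\,\|h\|_{1,\sqrt{\varphi_s}},
\]
which is precisely the hypothesis ``$\|U^{t,\theta}f\|_p\le M_1\|f\|_{p,\sqrt{\psi}}$'' of Theorem \ref{thmE2} with $p=1$, $\psi=\varphi_s$, $U^{t,\theta}=e^{-(t-\theta)H}$ (time-homogeneous).

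Next I would convert the Sobolev embedding $(M_1)$ into an off-diagonal $L^q\to L^r$ smoothing estimate. The standard route: $(M_1)$ plus the selfadjointness of $e^{-tH}$ (so that $\|e^{-tH}\|_{2\to2}\le 1$ and $-\frac{d}{dt}\|e^{-tH}f\|_2^2 = 2\|H^{1/2}e^{-tH}f\|_2^2$) yields, via the Nash inequality argument as in the proof of $(S_1)$ above, the bound $\|e^{-tH}\|_{2\to\infty}\le c\,t^{-j'/2}$, equivalently $\|e^{-tH}\|_{1\to2}\le c\,t^{-j'/2}$ by duality, and then $\|e^{-tH}\|_{2j'_*\to\infty}\le c\,t^{-\nu}$ for a suitable exponent; in any case one extracts an inequality of the form $\|e^{-(t-\theta)H}f\|_r\le M_2(t-\theta)^{-\nu}\|f\|_q$ for some $1\le 1<q<r\le\infty$ with the exponents matching $\nu$, $\beta=\frac{r}{q}\frac{q-p}{r-p}$ so that Theorem \ref{thmE2} applies. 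Feeding both inequalities into Theorem \ref{thmE2} gives
\[
\|e^{-tH}f\|_\infty \le M\,t^{-j'}\,\|\varphi_s f\|_1,\qquad 0<t\le s,\ f\in L^\infty_{\mathrm{com}},
\]
and setting $s=t$ produces $\|e^{-tH}f\|_\infty \le M t^{-j'}\|\varphi_t f\|_1$.

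From here the conclusion is a duality/kernel extraction step. The estimate $\|e^{-tH}(\cdot)\|_{L^1_{\sqrt{\varphi_t}}\to L^\infty}<\infty$ shows, via Dunford–Pettis / Gelfand's theorem (exactly as invoked at the end of the proof of Theorem A: $(M_1)$ verifies the hypotheses of Gelfand's theorem so $e^{-tH}$ is an integral operator), that $e^{-tH}$ has a kernel $e^{-tH}(x,y)$ with $|e^{-tH}(x,y)|\le C t^{-j'}\varphi_t(y)$ for a.e.\ $x,y$. Since $H=H^*$, the kernel is symmetric, $e^{-tH}(x,y)=e^{-tH}(y,x)$, whence also $|e^{-tH}(x,y)|\le C t^{-j'}\varphi_t(x)$; and by the semigroup/reproduction identity $e^{-tH}(x,y)=\langle e^{-\frac{t}{2}H}(x,\cdot), e^{-\frac{t}{2}H}(\cdot,y)\rangle$ combined with Cauchy–Schwarz and both one-sided bounds (applied with $t$ replaced by $t/2$, absorbing constants) one upgrades to the symmetric two-sided weighted bound $|e^{-tH}(x,y)|\le C t^{-j'}\varphi_t(x)\varphi_t(y)$ after adjusting $C$. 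I expect the main obstacle to be the bookkeeping in the second step — choosing the correct triple $(p,q,r)$ and exponent $\nu$ so that the Coulhon–Raynaud extrapolation of Theorem \ref{thmE2} fires and the resulting power of $t$ is exactly $t^{-j'}$ with $j'=j/(j-1)$; this is where one must carefully use that $(M_1)$ is an embedding into $L^{2j}$ and track how the selfadjoint $L^2$ Nash argument propagates the dimensional exponent. Everything else (the $(M_3)$–$(M_4)$ reduction, Gelfand's theorem, the symmetrization via the kernel) is routine and mirrors the proof of Theorem A verbatim.
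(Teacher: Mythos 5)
Your opening reduction (using $(M_3)$ and $(M_4)$ to get $\|e^{-tH}h\|_1\le c_0^{-1}c_1\|h\|_{1,\sqrt{\varphi_s}}$) is fine, but the central step of your plan fails: the abstract Sobolev embedding $(M_1)$ for a nonnegative selfadjoint operator does \emph{not} yield the unweighted smoothing bound $\|e^{-tH}\|_{2\to\infty}\le ct^{-j'/2}$ (equivalently $\|e^{-tH}\|_{1\to 2}\le ct^{-j'/2}$). Composing those two would give $\|e^{-tH}\|_{1\to\infty}\le c^2t^{-j'}$, i.e.\ ultracontractivity, whose failure is precisely the reason the weights are introduced: for $H=(-\Delta)^{\frac{\alpha}{2}}-\delta c^2_{d,\alpha}|x|^{-\alpha}$ the kernel satisfies $e^{-tH}(x,y)\approx e^{-t(-\Delta)^{\frac{\alpha}{2}}}(x,y)\varphi_t(x)\varphi_t(y)$ with $\varphi_t$ unbounded at the origin, so $\|e^{-tH}\|_{2\to\infty}=\sup_x\|e^{-tH}(x,\cdot)\|_2=\infty$. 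What $(M_1)$ alone gives is only $\|e^{-tH}\|_{2\to 2j}\le Ct^{-1/2}$ (via the spectral theorem); the Nash--Moser iteration up to $L^\infty$, as in the proof of $(S_1)$ in Section \ref{proof_thm2_sect}, uses the submarkovian structure of the approximating operators ($L^\infty$-contractivity, the inequality from \cite{LS}), which $e^{-tH}$ does not possess. For the same reason your intermediate goal $\|e^{-tH}f\|_\infty\le Mt^{-j'}\|\varphi_t f\|_1$, i.e.\ the one-sided kernel bound $|e^{-tH}(x,y)|\le Ct^{-j'}\varphi_t(y)$ uniform in $x$, is false in the symmetric setting (the kernel blows up in $x$ as well); this is a genuine difference from Theorem A for the drift operator, where $(S_1)$ does hold and the weight sits on one variable only. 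Finally, your Cauchy--Schwarz upgrade is circular: it needs $\|e^{-\frac{t}{2}H}(x,\cdot)\|_2^2=e^{-tH}(x,x)\le Ct^{-j'}\varphi_t(x)^2$, which is the diagonal case of the claim. (Had the one-sided bound been true, no upgrade would be needed anyway, since by $(M_4)$ one has $\min(\varphi_t(x),\varphi_t(y))\le c_0^{-1}\varphi_t(x)\varphi_t(y)$.)

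The theorem is quoted from \cite{MS}, and the actual argument keeps the weight on both sides throughout, using selfadjointness where the present paper uses the adjoint semigroup. From $(M_3)$, $\|\varphi_s e^{-tH}\varphi_s^{-1}\|_{1\to1}\le c_1$ controls $\langle|e^{-tH}f|\,\varphi_s\rangle\le c_1\langle|f|\,\varphi_s\rangle$ along the evolution, and by $(M_4)$, $\|e^{-tH}f\|_1\le c_0^{-1}c_1\|f\|_{1,\sqrt{\varphi_s}}$ for $0<t\le s$. Feeding this into Nash's argument --- $-\frac{d}{dt}\|e^{-tH}f\|_2^2=2\|H^{\frac12}e^{-tH}f\|_2^2\ge 2c_S\|e^{-tH}f\|_{2j}^2$ together with the interpolation of $L^2$ between $L^{2j}$ and $L^1$ --- gives the \emph{weighted} smoothing bound $\|e^{-tH}f\|_2\le Ct^{-j'/2}\|f\|_{1,\sqrt{\varphi_s}}$, $0<t\le s$ (equivalently one can run Theorem \ref{thmE2} with $p=1$, $\psi=\varphi_s$ and the $L^2\to L^{2j}$ bound). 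Since $H=H^*$, the adjoint of $e^{-tH}\varphi_s^{-1}:L^1\to L^2$ is $\varphi_s^{-1}e^{-tH}:L^2\to L^\infty$ with the same norm, so $\|\varphi_s^{-1}e^{-tH}g\|_\infty\le Ct^{-j'/2}\|g\|_2$; composing the two halves over $t=\frac{t}{2}+\frac{t}{2}$ yields $\|\varphi_s^{-1}e^{-tH}\varphi_s^{-1}\|_{1\to\infty}\le C'(t)^{-j'}$, and taking $s=t$ and extracting the kernel (Dunford--Pettis/Gelfand, as at the end of the proof of Theorem A) gives $|e^{-tH}(x,y)|\le Ct^{-j'}\varphi_t(x)\varphi_t(y)$. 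Your proposal, as written, cannot be repaired without replacing the unweighted $L^2\to L^\infty$ step by this weighted--dual step.
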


We now apply the previous theorem to the fractional Schr\"{o}dinger operator.
First, we consider the case $0<\delta<1$. The case $\delta=1$ requires few modifications, see below.

According to the Hardy-Rellich inequality 
$\| (-\Delta)^{\frac{\alpha}{4}}f\|_2^2 \geq c^{2}_{d,\alpha} \||x|^{-\frac{\alpha}{2}}f\|_2^2$ (see \eqref{hardy_ineq}),
the form difference $H=(-\Delta)^\frac{\alpha}{2}\dotminus V$ is well defined \cite[Ch.VI, sect 2.5]{Ka}.

Define $\beta$ by $\delta c^{2}_{d,\alpha}=\frac{\gamma(\beta)}{\gamma(\beta-\alpha)}$, and let $\varphi(x)\equiv \varphi_s(x)=\eta(s^{-\frac{1}{\alpha}}|x|)$, where $\eta \in C^2(\mathbb R - \{0\})$ is such that
\[
\eta(r)=\left \{
\begin{array}{ll}
r^{-d+\beta}, & 0<r< 1, \\
\frac{1}{2}, & r\geq 2.
\end{array}
\right.
\]

\begin{theorem}
\label{thm1_s} 
Under constraints $0<\delta<1$ and $0<\alpha<2$, $e^{-tH}$ is an integral operator for each $t>0$. The weighted Nash initial estimate 
\[
e^{-tH}(x,y)\leq c t^{-\frac{d}{\alpha}}\varphi_t(x)\varphi_t(y),\quad c=c_{d,\delta,\alpha},
\]
is valid for all $t>0$, $x,y\in \mathbb R^d-\{0\}$.
\end{theorem}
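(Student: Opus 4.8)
The plan is to verify the hypotheses $(M_1)$–$(M_4)$ of Theorem~\ref{weightM1} for the operator $H=(-\Delta)^{\frac{\alpha}{2}}\dotminus V$ with $V(x)=\delta c^2_{d,\alpha}|x|^{-\alpha}$ and the weight $\varphi_s(x)=\eta(s^{-\frac{1}{\alpha}}|x|)$ defined above, and then simply invoke the theorem with $j=\frac{d}{d-\alpha}$, so that $j'=\frac{d}{\alpha}$. Conditions $(M_2)$ and $(M_4)$ are immediate from the explicit form of $\varphi_s$ (it is bounded below by $\tfrac12$, and $\varphi_s^{\pm 1}\in L^2_{\loc}$ since $-d+\beta>-d$ and $d-\beta>-d$, using $\beta\in\,]\alpha,d[$). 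For $(M_1)$, the Sobolev embedding property with $j=\frac{d}{d-\alpha}$, I would combine the fractional Hardy--Rellich inequality \eqref{hardy_ineq} with the fractional Sobolev inequality $\|f\|_{2j}^2\le c_S^{-1}\|(-\Delta)^{\frac{\alpha}{4}}f\|_2^2$: writing $\langle Hf,f\rangle=\|(-\Delta)^{\frac{\alpha}{4}}f\|_2^2-\delta c^2_{d,\alpha}\||x|^{-\frac{\alpha}{2}}f\|_2^2$ and using $\delta<1$, we get $\langle Hf,f\rangle\ge(1-\delta)\|(-\Delta)^{\frac{\alpha}{4}}f\|_2^2\ge(1-\delta)c_S\|f\|_{2j}^2$, which is exactly $(M_1)$ with constant $(1-\delta)c_S$.

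The real work is $(M_3)$, the desingularizing $(L^1,L^1)$ bound $\|\varphi_s e^{-tH}\varphi_s^{-1}f\|_1\le c_1\|f\|_1$ for $0<t\le s$, uniformly in $s$. The key point is the analogue of Remark~\ref{lyapunov_rem}: by the choice $\delta c^2_{d,\alpha}=\frac{\gamma(\beta)}{\gamma(\beta-\alpha)}$, the function $\tilde\varphi(x)=(s^{\frac{1}{\alpha}}|x|^{-1})^{d-\beta}=s^{\frac{d-\beta}{\alpha}}|x|^{-d+\beta}$ is a genuine solution of $H\tilde\varphi=0$ away from the origin, since $(-\Delta)^{\frac{\alpha}{2}}|x|^{-d+\beta}=\frac{\gamma(\beta)}{\gamma(\beta-\alpha)}|x|^{-d+\beta-\alpha}=V(x)|x|^{-d+\beta}$ (see Appendix~\ref{appA__}). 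Following the scheme of Section~\ref{proof_thm2_sect}, I would introduce smooth approximations $V_\varepsilon(x)=\delta c^2_{d,\alpha}|x|_\varepsilon^{-\alpha}$, $H^\varepsilon:=(-\Delta)^{\frac{\alpha}{2}}-V_\varepsilon$ (a bounded perturbation, so $-H^\varepsilon$ generates a holomorphic semigroup in every $L^p$ and in $C_u$), and a regularized weight $\phi_n=e^{-H^\varepsilon/n}\varphi$ (here $H^\varepsilon$ is selfadjoint, so there is no adjoint to distinguish; the regularization still serves to make $\phi_n\in D((-\Delta)^{\frac{\alpha}{2}}_1)$-type domains after splitting $\varphi=\varphi_{(1)}+\varphi_{(u)}$). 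Then I would show, exactly as in Proposition~\ref{main_prop4}, that the operator $\tilde Q=(\phi_n H^\varepsilon\phi_n^{-1}\upharpoonright M)^{\mathrm{clos}}$ satisfies $\Real\langle(\lambda+\tilde Q)f,\tfrac{f}{|f|}\rangle\ge 0$ for $\lambda\ge\hat c s^{-1}$; the computation $\Real\langle Qf,\tfrac{f}{|f|}\rangle\ge\langle e^{-H^\varepsilon/n}|u|,H^\varepsilon\varphi\rangle$ and the lower bound $H^\varepsilon\varphi=(-\Delta)^{\frac{\alpha}{2}}\varphi-V_\varepsilon\varphi\ge -Cs^{-1}$ (splitting off the exact-solution part $\tilde\varphi$ on which $(-\Delta)^{\frac{\alpha}{2}}\tilde\varphi-V\tilde\varphi=0$, and controlling the smooth remainder $\varphi-\tilde\varphi$ and the error $V-V_\varepsilon\ge 0$ on $B(0,s^{1/\alpha})$) go through verbatim. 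Proposition~\ref{dense_prop}'s density-of-range argument also transfers. Then Lumer--Phillips gives $\|\phi_n e^{-tH^\varepsilon}\phi_n^{-1}\|_{1\to1}\le e^{\hat c s^{-1}t}$, and letting $n\to\infty$ and $\varepsilon\downarrow0$ (using $e^{-tH^\varepsilon}\to e^{-tH}$ strongly in $L^2$, hence a.e.\ along a subsequence, and Fatou) yields $(M_3)$ with $c_1=e^{\hat c}$ on $0<t\le s$.

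With $(M_1)$–$(M_4)$ in hand, Theorem~\ref{weightM1} immediately gives that $e^{-tH}$ is an integral operator and $e^{-tH}(x,y)\le C t^{-\frac{d}{\alpha}}\varphi_t(x)\varphi_t(y)$ a.e., and one upgrades ``a.e.'' to ``for all $x,y\ne 0$'' by the continuity of $e^{-tH}(x,y)$ away from the origin (inherited from the a priori estimates for $e^{-tH^\varepsilon}(x,y)$ and the convergence), finishing the proof. The main obstacle is, as in the body of the paper, the verification of $(M_3)$—specifically, getting the accretivity of $\lambda+\tilde Q$ in $L^1$ uniformly in $s$, which rests entirely on the Lyapunov identity $H\tilde\varphi=0$ forced by the defining equation $\delta c^2_{d,\alpha}=\gamma(\beta)/\gamma(\beta-\alpha)$ and on carefully bounding the non-exact remainder terms by $Cs^{-1}$; everything else is either explicit or a transcription of the arguments already given for $\Lambda$. (For the borderline case $\delta=1$ one replaces $(M_1)$ by the sub-optimal Sobolev embedding with $j\in[1,\frac{d}{d-\alpha}[$ coming from \cite[Cor.\,2.5]{FLS}, obtains a sub-optimal diagonal bound, and then uses the scaling $e^{-tH}(x,y)=t^{-d/\alpha}e^{-H}(t^{-1/\alpha}x,t^{-1/\alpha}y)$ together with optimality at $t=1$, exactly as in \cite{BGJP}.)
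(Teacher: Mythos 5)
Your proposal follows essentially the same route as the paper: its proof of Theorem \ref{thm1_s} consists precisely in verifying $(M_1)$--$(M_4)$ for $H$ and the weight $\varphi_s$ and invoking Theorem \ref{weightM1} with $j=\frac{d}{d-\alpha}$, with $(M_3)$ obtained by transcribing the accretivity/Lumer--Phillips argument of Proposition \ref{main_prop4} to the self-adjoint approximants $H^\varepsilon=(-\Delta)^{\frac{\alpha}{2}}-V_\varepsilon$ (details delegated to \cite{KiS2}), exactly as you outline. One cosmetic point: for $(M_2)$ the $L^2_{\loc}$ membership of $\varphi_s$ requires $d-\beta<\tfrac{d}{2}$, which holds because the relevant root of $\delta c^2_{d,\alpha}=\gamma(\beta)/\gamma(\beta-\alpha)$ satisfies $\beta\geq\tfrac{d+\alpha}{2}$, not merely $\beta\in\,]\alpha,d[$ as you state.
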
 

The proof of Theorem \ref{thm1_s} follows the proof of Theorem \ref{thm2} (where we appeal to Theorem \ref{weightM1} instead of Theorem A), but is easier since operator $H$ is self-adjoint.
(Here and below, we refer to \cite{KiS2} for details.)

Having at hand Theorem \ref{thm1_s}, one can obtain the upper and lower bounds
\[
e^{-tH}(x,y)\approx e^{-t(-\Delta)^\frac{\alpha}{2}}(x,y)\varphi_t(x)\varphi_t(y). 
\]
The proof is similar to the proof of Theorem \ref{thm2_}, but is easier since operator $H$ is self-adjoint and we can appeal to the monotonicity arguments.

\medskip

\noindent\textbf{The case $\delta=1$.}
The following construction is standard: Define $H$ as the (minus) generator of a $C_0$ semigroup
$$
U^t:=s\mbox{-}L^2\mbox{-}\lim_{\varepsilon \downarrow 0}e^{-tH^{\varepsilon}},
$$
where $H^\varepsilon=(-\Delta)^{\frac{\alpha}{2}}- V_\varepsilon$, $V_\varepsilon(x)=\delta c^{2}_{d,\alpha}|x|_\varepsilon^{-2}$, $|x|_\varepsilon:=\sqrt{|x|^2+\varepsilon}$, $\varepsilon>0$.
(Indeed, set $u_\varepsilon(t)=e^{-tH^\varepsilon} f$, $f\in L^2_+.$ Since $\{V_\varepsilon\}$ is monotonically increasing as $\varepsilon \downarrow 0$, $0 \leq u_\varepsilon \uparrow u$ to some $u\geq 0$. Since $\|u_\varepsilon\|_2 \leq \|f\|_2$, we have
\[
\|u\|_2 \leq \|f\|_2, \quad \|u_\varepsilon \|_2 \uparrow \|u\|_2, \qquad  u_\varepsilon \overset{s}\rightarrow u=: U^t f, \quad U^{t+s}=U^tU^s, \quad U^tf\overset{s}\rightarrow f \text{ as } t\downarrow 0,
\]
$$
U^t \Real L^2 \subset \Real L^2, \quad U^t L^2_+ \subset L^2_+ \qquad (\Rightarrow |U^tf| \leq U^t|f| \text{ a.e. for all }f \in L^2),
$$
so, for $f \in L^2$, $\|U^t f\|_2 \leq \|U^t|f|\|_2 \leq \|f\|_2$.
By \cite[Corollary 2.5]{FLS}, 
$$
\langle (H+1) u, u \rangle \geq C_d\|u\|_{2j}^2,  \quad j \in [1,\frac{d}{d-\alpha}[,
$$  
so Theorem \ref{weightM1} yields the sub-optimal weighted Nash initial estimate $e^{-t(H+1)}(x,y) \leq Ct^{-j'}\varphi_t(x)\varphi_t(y)$ with $j'>\frac{d}{\alpha}$.
Now, we follow the observation in \cite{BGJP}: $e^{-H}(x,y) \leq eC\varphi_1(x)\varphi_1(y)$, so one can repeat the proof of the upper bound to obtain
$$
e^{-H}(x,y) \leq \tilde{C} e^{-(-\Delta)^{\frac{\alpha}{2}}}(x,y)\varphi_1(x)\varphi_1(y),
$$
and so, by scaling, 
$$
e^{-tH}(x,y) \leq \tilde{C} e^{-t(-\Delta)^{\frac{\alpha}{2}}}(x,y)\varphi_t(x)\varphi_t(y), \quad t>0.
$$
The proof of the lower bound remains the same.

\bigskip

\bigskip

\section{Lyapunov's function to $(-\Delta)^\frac{\alpha}{2}-V$}

\label{appA__}

Set $I_\alpha=(-\Delta)^{-\frac{\alpha}{2}}$, the Riesz potential defined by the formula
$$I_\alpha f(x):=\frac{1}{\gamma(\alpha)}\langle |x-\cdot|^{-d+\alpha}f(\cdot)\rangle, \quad \gamma(\alpha):=\frac{2^\alpha\pi^\frac{d}{2}\Gamma(\frac{\alpha}{2})}{\Gamma(\frac{d}{2}-\frac{\alpha}{2})}.$$ The identity 
\begin{equation*}
\frac{\gamma(\beta-\alpha)}{\gamma(\beta)}|x|^{-d+\beta} =I_\alpha |x|^{-d+\beta-\alpha}, \quad 0<\alpha<\beta<d,
\end{equation*}
follows e.g.\,from $I_{\beta}=I_\alpha I_{\beta-\alpha}$.

It follows that $\tilde{\varphi}_1(x)=|x|^{-d+\beta}$ is a Lyapunov's function to the formal operator $(-\Delta)^\frac{\alpha}{2}-V$:
$$
(-\Delta)^\frac{\alpha}{2}|x|^{-d+\beta}=V(x)|x|^{-d+\beta}, \quad V(x)=\frac{\gamma(\beta)}{\gamma(\beta-\alpha)}|x|^{-\alpha}.
$$

\bigskip

\section{The range of an accretive operator}

\label{appC}

Let $P$ be a closed operator on $L^1$ such that $\Real\langle(\lambda+ P)f,\frac{f}{|f|}\rangle\geq 0$ for all $f \in D(P)$,
and $R(\mu + P)$ is dense in $L^1$ for a $\mu>\lambda$. 

Then $R(\mu + P)=L^1$.

Indeed, let $y_n \in R(\mu + P)$, $n=1,2,\dots$, be a Cauchy sequence in $L^1$; $y_n=(\mu+P)x_n$, $x_n \in D(P)$. Write $[f,g] := \langle f,\frac{g}{|g|}\rangle$. Then
\begin{align*}
(\mu-\lambda)\|x_n-x_m\|_1&=(\mu-\lambda)[x_n-x_m,x_n-x_m] \\
&\leq (\mu-\lambda)[x_n-x_m,x_n-x_m] + [(\lambda+P)(x_n-x_m),x_n-x_m] \\
& = [(\mu+P)(x_n-x_m),x_n-x_m] \leq \|y_n-y_m\|_1.
\end{align*}
Thus, $\{x_n\}$ is itself a Cauchy sequence in $L^1$. Since $P$ is closed, the result follows.

%

\bigskip

\section{Weighted Nash initial estimate for $\alpha=2$}

\label{app}

For $\alpha=2$, the operator $(-\Delta)^{\frac{\alpha}{2}}+\kappa|x|^{-\alpha}x \cdot \nabla$ becomes $-\Delta + \sqrt{\delta} \frac{d-2}{2}|x|^{-2}x \cdot \nabla$.

\textit{Let $d \geq 3$, $0<\delta<4$. For every $r \in ]r_c,\infty[$, $r_c:=\frac{2}{2-\sqrt{\delta}}$ the limit}
\begin{equation}
\label{c}
\tag{$\ast$}
e^{-t\Lambda}=s{\mbox-}L^r{\mbox-}\lim_{\varepsilon \downarrow 0}e^{-t\Lambda^\varepsilon} \quad \text{(loc.\,uniformly in $t \geq 0$)}
\end{equation}
\textit{exists and determines a $C_0$ semigroup; here} $\Lambda^\varepsilon:=-\Delta + b_\varepsilon \cdot \nabla$, $b_\varepsilon(x)=\sqrt{\delta} \frac{d-2}{2}|x|_\varepsilon^{-2}x$, $D(\Lambda^\varepsilon)=(1-\Delta)^{-1}L^r$; see e.g.\,\cite[Theorem 4.1]{KiS1}. 

\textit{Then $e^{-t\Lambda}$, $t>0$, is an integral operator, and }
\begin{equation*}
e^{-t\Lambda}(x,y) \leq C t^{-\frac{d}{2}}\varphi_t(y), \quad x,y \in \mathbb R^d, y \neq 0, \quad t>0,
\end{equation*}
where $\varphi_t(y)=\eta(t^{-\frac{1}{2}}|y|)$, 
and $\eta$ is a $C^2(]0,\infty[)$ function such that
\[
\eta(r)=\left \{
\begin{array}{ll}
r^{-\sqrt{\delta}\frac{d-2}{2}}, & 0<r< 1,\\
\frac{1}{2}, & r\geq 2.
\end{array}
\right. 
\]

\begin{proof}
This is a consequence of 
Theorem A applied to $(e^{-t\Lambda^\varepsilon},\varphi_s)$. 
To verify ($S_1$), ($S_2$), ($S_3$), ($S_4$), we argue as in the proof of Theorem \ref{thm2}. 
We note that in the local case ($S_4$) can also be proved by direct calculations:

Proof of ($S_4$). It suffices to prove a priori estimate 
\begin{equation}
\label{le}
\tag{$\ast\ast$}
\|\varphi_s^\varepsilon e^{-t\Lambda^\varepsilon} (\varphi_s^\varepsilon)^{-1} g\|_{1 \rightarrow 1} \leq e^{c\frac{t}{s}}\| g\|_1, \quad c \neq c(\varepsilon), \quad g \in C_c, \quad 0<t \leq s,
\end{equation}
where $\varphi_s^\varepsilon(y)=\eta(s^{-\frac{1}{2}}|y|_\varepsilon)$.
Then, taking \eqref{le} for granted, we obtain $\|\varphi_s e^{-t\Lambda} \varphi_s^{-1} g\|_{1 \rightarrow 1} \leq e^{c\frac{t}{s}}\| g\|_1$ by \eqref{c} and Fatou's Lemma, and so
($S_4$) follows upon taking $s=t$.

\smallskip

Proof of  \eqref{le}. 

1) Set $$H(\varphi^\varepsilon)=-\Delta +\nabla\cdot (b_\varepsilon+2\frac{\nabla\varphi^\varepsilon}{\varphi^\varepsilon})+W_\varepsilon, \quad D(H(\varphi^\varepsilon))=W^{2,2},$$
$$
W_\varepsilon=-\frac{\nabla\varphi^\varepsilon}{\varphi^\varepsilon}\cdot\big(b_\varepsilon+\frac{\nabla\varphi^\varepsilon}{\varphi^\varepsilon}\big)-{\rm div}\big(b_\varepsilon+\frac{\nabla\varphi^\varepsilon}{\varphi^\varepsilon}\big).
$$
In fact, we have $H(\varphi^\varepsilon)=\varphi_s^\varepsilon \Lambda^\varepsilon (\varphi_s^\varepsilon)^{-1}$.

\smallskip

2) $\varphi_s^\varepsilon e^{-t\Lambda^\varepsilon}(\varphi_s^\varepsilon)^{-1}=e^{-tH(\varphi^\varepsilon)}.$

Indeed, put $F^t=\varphi^\varepsilon e^{-t \Lambda^\varepsilon}(\varphi^\varepsilon)^{-1}$. 2) is valid because $s\mbox{-}L^2\mbox{-}\lim_{t\downarrow 0}t^{-1}(1-F^t)f$ exists for all $f\in W^{2,2}$ and coincides with $H(\varphi^\varepsilon)f$.  

\smallskip

3) We have $b_\varepsilon+\frac{\nabla\varphi^\varepsilon}{\varphi^\varepsilon}=0$ in $B(0,\sqrt{s})$. Moreover, $|b_\varepsilon+\frac{\nabla\varphi^\varepsilon}{\varphi^\varepsilon}| \leq \frac{c_1}{\sqrt{s}}$, $|{\rm div}\big(b_\varepsilon+\frac{\nabla\varphi^\varepsilon}{\varphi^\varepsilon}| \leq \frac{c_2}{s}$,
the potential $|W_\varepsilon| \leq \frac{c}{s}$, $c \neq c(\varepsilon)$. It follows that
$e^{-tH(\varphi^\varepsilon)}$ is a quasi contraction on $L^1$: $\|e^{-tH(\varphi^\varepsilon)}\|_{1 \rightarrow 1} \leq e^{\frac{c}{s}t}$, $0<t \leq s$. In view of 2), \eqref{le} follows.
\end{proof}

\end{document}